\newcommand{\vect}[1]{\boldsymbol{#1}}
\newtheorem{lemma}{Lemma}
\newtheorem{corollary}[lemma]{Corollary}
\newtheorem{proposition}[lemma]{Proposition}
\newtheorem{theorem}[lemma]{Theorem}
\newtheorem{question}[lemma]{Question}
\newtheorem{example}[lemma]{Example}
\newtheorem{remark}[lemma]{Remark}
\title[]{Lacunary Polynomial Compositions}
\subjclass[2020]{11C08, 11R09, 12E05}
\keywords{lacunary polynomials, polynomial composition}
\author{A. Moscariello}
\address{Dipartimento di Matematica, Università di Pisa, Largo Bruno Pontecorvo 5, 56127 Pisa, Italy.}
\email{moscariello@mail.dm.unipi.it}
\begin{document}
\maketitle
\begin{abstract}
This work is a study of polynomial compositions having a fixed number of terms. We outline a recursive method to describe these characterizations, give some particular results and discuss the general case. In the final sections, some applications to Universal Hilbert Sets generated by closed forms of linear recurrence relations and to integer perfect powers having few digits in their representation in a given scale $x \ge 2$ are provided.
\end{abstract}

\section*{Introduction}
	A \emph{lacunary polynomial} (also called \emph{sparse polynomial}) is a polynomial where the number of terms is assumed fixed, with no control on the value of the degrees and coefficients of said terms. For instance, we may write $g(X)=a_1X^{l_1}+\dots+a_kX^{l_k}$ for a lacunary polynomial with at most $k$ different terms, where $k$ is fixed and $a_1,\ldots,a_k$ and $l_1,\ldots,l_k$ can be taken with no restriction. This assumption on the number of terms of a polynomial is equivalent to a bound on the polynomial's complexity. For instance, a non-constant monomial only admits one root (that is, zero), while the set of roots of a given polynomial having exactly two terms has a very simple structure, since these roots can only differ by some $n$th roots of unit.

This focus on the number of terms of a polynomial rather than the values of these terms or their degree, naturally brings some open questions, regarding in particular the behaviour of these polynomials under composition. For instance, Erd\H{o}s and R\'enyi independently conjectured (see \cite{E1}, \cite{R1}) the existence of a bound for the number of terms of the square of a given polynomial having a fixed number $k$ of terms, depending only on $k$. Later, this was proved by Schinzel (see \cite{S2}) in a more general setting, providing a lower bound for the number of terms of a lacunary power $P(T)^d$, where $P(T)$ is a lacunary polynomial. In the same work, he further conjectured that a bound could be found for the number of terms of the polynomial composition $f(g(X))$, depending on the number of terms of both polynomials involved. Schinzel's conjecture has then been proved by Zannier (\cite{Z1}), developing completely different methods from those used by Schinzel.

In this work, we will further extend this setting by studying lacunary polynomials obtained as composition of a Laurent polynomial $g(X_1,\ldots,X_\sigma)$ in $\sigma \ge 1$ indeterminates, with a classic univariate polynomial $f(T)$. Our motivation stems from some applications of this question in some arithmetic contexts. For this purpose, after an overview on this problem, we will focus on some special cases, according to our needs. The general Question is the following:
\begin{question}\label{mainQ}
	Let $\sigma \ge 1$ and $\rho \ge 0$ be integers, let $g(X_1,\ldots,X_\sigma) \in \mathbb{C}[X_1^{\pm 1},\ldots,X_\sigma^{\pm 1}]$ be a Laurent polynomial in the indeterminates $\vect{X} =(X_1,\ldots,X_\sigma)$, and let $f(T) \in \mathbb{C}[T]$, which we can assume monic without loss of generality. \\
	Determine for which polynomials $f$ and $g$ the identity
	\begin{equation}\label{mainEQ}
	f(g(\vect{X}))=a_1X_1^{l_1}+\ldots+a_\sigma X_\sigma^{l_\sigma}+a_{\sigma+1}T_1(\vect{X})+\ldots+a_{\sigma+\rho}T_\rho(\vect{X})
	\end{equation}
	holds, for $l_1,\ldots,l_\sigma$ positive integers, $a_1,\ldots,a_{\sigma+\rho} \in \mathbb{C}$ and $T_1,\ldots,T_\rho \in \mathbb{C}[X_1^{\pm 1},\ldots,X_\sigma^{\pm 1}]$ monomials in $X_1,\ldots,X_\sigma$. 
\end{question}

Notice that, while this formulation may appear very specific at first sight, it is actually quite generic. In order to see this, consider a polynomial composition of the form $$f(g(X_1,\ldots,X_\sigma))=a_1T_1(X_1,\dots,X_\sigma)+\ldots+a_kT_k(X_1,\ldots,X_\sigma),$$ where $T_1,\ldots,T_k$ are monomials in $X_1,\ldots,X_\sigma$. Among those monomials, we can choose a set $\mathcal{M}$ of multiplicative independent terms with maximal cardinality $\sigma$, and then choose a new set of indeterminates $Y_1,\ldots,Y_\sigma$ such that each element of $\mathcal{M}$ becomes a power $Y_i^{l_i}$, where the exponents $l_i$ are chosen so that all exponents of the original composition are still in $\mathbb{Z}$. With this substitution, it is very easy to check that a generic polynomial composition can be associated to an identity of the form (\ref{mainEQ}). 

We study some particular cases of this question, starting with lacunary polynomial powers ($f(T)=T^m$, Section 1) and then, leveraging the results obtained, we will investigate the general case, considering polynomial compositions with few monomials $T_i(X_1,\ldots,X_\sigma)$ (solving the cases $\rho \in {1, 2}$, Section 3). This choice is motivated by an application to a question concerning closed forms of linear recurrence relations and Universal Hilbert Sets, which we will first describe in Section 2. Also, in the Appendix, we provide a brief application of our results on lacunary polynomial powers to  perfect powers having few non-zero digits in their representation in a fixed scale, coming from a work of Corvaja and Zannier (\cite{CZ1}).

\section{Polynomial powers with few terms}
Consider a lacunary polynomial power $P(T)^d$ having exactly $k$ non-zero terms. We can assume without loss of generality that the term of degree zero is $1$, i.e. $$P(T)^d=1+ \sum_{i=1}^{k-1} \xi_iT^{l_i}.$$
The following remark will allow us to make some further assumptions in our study. 

\begin{remark}\label{2}
	\begin{enumerate}
		\item If $P(T)^d=1+\displaystyle\sum_{i=1}^{k-1} \xi_iT^{l_i}$, take the polynomial $Q(T)=\sqrt[d]{\frac{1}{\xi_{k-1}}T^{l_{k-1}}}P(T^{-1})$. Then $Q(T)^d=1+\left(\displaystyle\sum_{i=1}^{k-2}\frac{\xi_i}{\xi_{k-1}}T^{l_{k-1}-l_i}\right)+\frac{1}{\xi_{k-1}}T^{l_{k-1}}$. Rearranging the indices appearing in this equation in a way such that the exponents of $Q(T)^d$ are in increasing order, we see that the $\left \lceil \frac{k-1}{2} \right \rceil$th exponent of $Q(T)^d$ is $l_{k-1}-l_{k-1-\lceil\frac{k-1}{2}\rceil} \ge l_{k-1}-l_{\lceil\frac{k-1}{2}\rceil}$; thus, by swapping $P(T)$ with $Q(T)$, we can first assume that $l_{\lceil\frac{k-1}{2}\rceil} \ge \frac{l_{k-1}}{2}$, and then deduce the remaining cases using the relation between $P(T)$ and $Q(T)$.
		\item Assume that $\displaystyle P(T)=1+\sum_{i=l_1}^{deg P(T)} a_iT^i$ contains at least one term whose degree is not a multiple of $l_1$, and let $r$ be the smallest such degree. Then $P(T)^d$ has a term of degree $r$, that is, $r \in \{l_2,\dots,l_{k-1}\}$.
	\end{enumerate}
\end{remark}

The first step of our study consists in limiting the value of $d$ in function of the number of terms of the power $P(T)^d$. The next result can be deduced from known Theorems of Zannier and Schinzel (\cite{S2}, \cite{SZ1}); we include here an elementary proof.
\begin{proposition}\label{1}
	Let $d \ge 2$ and $k \ge 2$ be integers, and let $\xi_1,\dots,\xi_{k-1} \in \mathbb{C} \setminus \{0\}$ and $l_1 < l_2 < \dots < l_{k-1}$ be positive integers. Consider a polynomial $P(T) \in \mathbb{C}[T]$ such that $\displaystyle P(T)^d=1+\sum_{i=1}^{k-1} \xi_i T^{l_i}$. Then $d \le k-1$, and moreover, if $d=k-1$, then $P(T)=1+\frac{\xi_1}{d}T^{l_1}$.
\end{proposition}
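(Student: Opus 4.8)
The plan is to work with the lowest-degree behavior of $P(T)^d$ and to use a derivative/resultant-type argument to control the gaps. First I would normalize: since $P(T)^d$ has constant term $1$, after possibly replacing $P$ by $\zeta P$ for a $d$-th root of unity we may assume $P(0)=1$, so write $P(T)=1+c_1T^{m}+\text{(higher order terms)}$ with $c_1\neq 0$ and $m\ge 1$ the order of $P-1$. Expanding, $P(T)^d=1+dc_1T^{m}+\cdots$, so $l_1=m$ (here using $d\ge 1$, $c_1\neq 0$, in characteristic zero), and more importantly the first $\lfloor (\deg P)/m\rfloor$ or so coefficients of $P^d$ are determined by those of $P$ through the binomial expansion.

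The key step is a counting argument on the number of "free" low-order coefficients. Consider the truncation of $P$ at degree $< 2m$: we have $P(T)\equiv 1+c_1T^m+R(T)\pmod{T^{2m}}$ where $R$ collects the terms of degree strictly between $m$ and $2m$ (if $P$ has such terms). By Remark~\ref{2}(2), the smallest degree $r$ of a term of $P$ not divisible by $m$ must actually appear in $P^d$, i.e.\ $r\in\{l_2,\dots,l_{k-1}\}$. I would iterate this idea: whenever $P$ has a term whose degree is not a multiple of the gcd of the exponents seen so far, that degree propagates into $P^d$, costing a term. The cleanest route, though, is to bound $\deg P$: since $P^d$ has $k$ terms and its degree is $l_{k-1}=d\deg P$, and since $P$ itself is recovered from $P^d$ by extracting a $d$-th root (a power series computation), the sparsity of $P^d$ forces structure on $P$. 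I expect to split into two cases according to whether $P$ is itself a binomial $1+c_1T^m$ (possibly after the substitution $T\mapsto T^{\pm1}$ from Remark~\ref{2}(1)) or genuinely has three or more terms.

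In the binomial case $P(T)=1+c_1T^m$, one gets $P(T)^d=\sum_{j=0}^{d}\binom{d}{j}c_1^jT^{jm}$, which has exactly $d+1$ nonzero terms (all binomial coefficients nonzero in characteristic zero), so $d+1\le k$, giving $d\le k-1$; and equality $d=k-1$ holds precisely when every intermediate term is present, which is automatic here, while the stated normal form $P(T)=1+\frac{\xi_1}{d}T^{l_1}$ follows from matching $dc_1=\xi_1$ and $l_1=m$. In the non-binomial case I would show $d\le k-2$, hence $d=k-1$ cannot occur, so no further analysis of equality is needed. For that I would argue that if $P$ has $t\ge 3$ terms, then $P^d$ has at least $t + (d-1)$ terms: the lowest $d+1$ multiples $0,m,2m,\dots,dm$ of $m$ all appear (from the subsum $(1+c_1T^m)^d$, and higher-degree terms of $P$ cannot cancel the bottom ones), contributing $d+1$ terms, and additionally the smallest non-$m$-multiple exponent of $P$ (which exists since $t\ge 3$ forces a second independent exponent, or else $P=1+c_1T^m+c_2T^{2m}+\cdots$ is a polynomial in $T^m$ and we rescale $T^m\to T$) injects at least one more term by Remark~\ref{2}(2); tracking these carefully yields $k\ge d+2$.

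The main obstacle I anticipate is the non-binomial case: making rigorous the claim that the extra terms of $P$ genuinely produce extra terms of $P^d$ \emph{without} cancelling among themselves or against the binomial skeleton. The safe way to handle this is to look at a single well-chosen degree — the smallest exponent $r$ of $P$ not a multiple of $m$ — and observe that in $P^d$ the coefficient of $T^r$ equals $dc_r$ (all other contributions to degree $r$ come from products of lower-degree terms, which are all powers of $T^m$ and hence cannot sum to $r$), so $T^r$ survives; combined with the $d+1$ surviving powers of $T^m$ below and including $T^{dm}$, and noting $r\ne jm$, this already gives $k\ge d+2$ provided $r\le dm$, and the case $r>dm$ is easily dispatched since then $\deg P\ge r>dm\ge m$ forces even more terms. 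Reducing first by the substitution $T\mapsto T^{d'}$ so that $\gcd$ of exponents of $P-1$ is $1$ lets me assume $m=1$, which streamlines all of this.
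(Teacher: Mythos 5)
Your binomial case is fine, but the non-binomial case --- which is the entire difficulty --- rests on a false claim. You assert that the $d+1$ exponents $0,m,2m,\dots,dm$ all survive in $P(T)^d$ because they come ``from the subsum $(1+c_1T^m)^d$'' and higher-degree terms of $P$ cannot cancel them. This is wrong: if $P$ has further terms whose degrees are multiples of $m$, they contribute to exactly those exponents and can cancel them. The paper's own tables supply counterexamples, e.g.\ $P(T)=1+\frac{1}{2}\xi_1T^{l_1}-\frac{1}{8}\xi_1^2T^{2l_1}$ with $d=2$, where the coefficient of $T^{2l_1}$ in $P^2$ is $2\cdot(-\frac{1}{8}\xi_1^2)+(\frac{1}{2}\xi_1)^2=0$; or $P(T)=1+\frac{1}{3}\xi_1T^{l_1}-\frac{1}{9}\xi_1^2T^{2l_1}$ with $d=3$, where both $T^{2l_1}$ and $T^{4l_1}$ drop out of $P^3$. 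So the count ``at least $d+1$ terms from the multiples of $m$, plus one more'' does not go through, and you have no proof that $d\le k-2$ when $P$ is not a binomial. Your fallback via Remark~\ref{2}(2) does not repair this, because after your own normalization ($\gcd$ of the exponents of $P-1$ equal to $1$, so $m=1$) every exponent is a multiple of $m$ and the ``smallest non-$m$-multiple exponent'' you want to inject no longer exists. Controlling which terms cancel is precisely the hard content of Section~1 of the paper, so it cannot be waved through here.

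For contrast, the paper sidesteps cancellation entirely: every root $\alpha$ of $P^d$ has multiplicity divisible by $d$, so the first $\lambda d-1$ derivatives of $Q=P^d$ vanish at $\alpha$; after multiplying the $j$th derivative condition by $T^j$ this becomes a linear system in the $k-1$ quantities $\alpha^{l_1},\dots,\alpha^{l_{k-1}}$ whose matrix reduces to a Vandermonde matrix, and $P(0)\neq 0$ rules out the trivial solution, forcing $\lambda d\le k-1$. The equality case $d=k-1$ then follows from the one-dimensionality of the solution space, which ties all roots of $P$ together by roots of unity and forces $P$ to be the binomial $1+\frac{\xi_1}{d}T^{l_1}$. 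If you want to keep a coefficient-counting strategy, you would need a genuinely new idea (such as the Vandermonde/Wronskian-type argument above) to exclude cancellations at the low multiples of $m$; as written, the proposal does not prove the proposition.
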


\begin{proof}
	
	Clearly, all root multiplicities of $P(T)^d$ are divisible by $d$. Take a root $\alpha$ of $P(T)^d$, and let $\lambda d$ be its multiplicity. Then, using the substitution $Q(T)=P(T)^d$, we have that all derivatives $Q(\alpha)=\dv[j]{Q}{T}  (\alpha)$ vanish for every $j=1,\dots,\lambda d-1$.
	On the other hand $$\dv[j]{Q}{T} = \sum_{i=1}^{k-1} \xi_i l_i (l_i-1)\dots(l_i-j+1)T^{l_i-j}.$$
	Thus, the condition $\dv[j]{Q}{T}  (\alpha)=0$ for every $j=1,\dots,\lambda d-1$ can be naturally translated in a system consisting of $\lambda d-1$ equations. By multiplying the $i$th equation by $T^i$ (remember that $P(0) \neq 0$) we obtain the equivalent system $$\begin{cases} \displaystyle\sum_{i=1}^{k-1} \xi_i l_i T^{l_i}=0, \\ \displaystyle\sum_{i=1}^{k-1} \xi_i l_i (l_i-1)T^{l_i}=0, \\ \dots \\ \displaystyle\sum_{i=1}^{k-1} \xi_i l_i (l_i-1)\dots(l_i-\lambda d+2)T^{l_i}=0 . \end{cases}.$$
	At this stage, a solution of this system induces a solution of the associated linear system over $\mathbb{C}^k$ in the indeterminates $T^{l_1},\dots,T^{l_{k-1}}$, and the matrix associated to this linear system can be easily reduced to a Vandermonde matrix, whose determinant is non-vanishing: then, if $\lambda d > k-1$, this linear system admits only the trivial solution $T^{l_1}=\dots=T^{l_{k-1}}=0$, which is not admissible since $P(0) \neq 0$; therefore $d \le k-1$.
	
	Furthermore, arguing in the same way, if $d=k-1$ we have $\lambda=1$, hence all roots of $Q(T)$ have multiplicity $d$ and all roots of $P(T)$ are simple. Moreover, every solution of the linear system obtained from the vanishing condition on derivatives has the form $\mu(\alpha^{l_1},\dots,\alpha^{l_{k-1}})$, with $\mu \in \mathbb{C}$. Consider a root $\beta$ of $P(T)$ distinct from $\alpha$, there exists $\mu \in \mathbb{C}$ such that $(\beta^{l_1},\dots,\beta^{l_{k-1}})=\mu(\alpha^{l_1},\dots,\alpha^{l_{k-1}})$. Therefore, since $Q(\alpha)=0$, clearly $\displaystyle 1=-\sum_{i=1}^{k-1} \xi_i \alpha^{l_i}$, while on the other hand $Q(\beta)=0$ yields $\displaystyle 0=1+\mu \left(\sum_{i=1}^{k-1} \xi_i \alpha^{l_i} \right)=1-\mu$, that is, $\mu=1$. Then $\beta^{l_i}=\alpha^{l_i}$ for every $i=1,\dots,k-1$, implying that there exists a $m$th root of unity $\zeta$ such that $\beta=\zeta \alpha$, where $m$ is such that $m|l_i$ for every $i=1,\dots,k-1$. Then we obtain that $P(T)$ has at most $l_1$ roots. Thus, since those roots are all simple, it easily follows that $m=l_1=\deg P(T)$. The remaining claims follow immediately.\qedhere
	
\end{proof}

Another tool we need is a generalization of the \emph{Vandermonde's identity}. Remember that, for a real number $r \in \mathbb{R}$ that is not a negative integer, and for $n \in \mathbb{N}$, the \emph{binomial coefficient} $\binom{r}{n}$ is defined as $\binom{r}{n}:=\frac{r(r-1)(r-2)\dots (r-n+1)}{n!}$, and is such that the Binomial Theorem expansion $\displaystyle (1+x)^r=1+\sum_{i=1}^{+\infty}\binom{r}{i}x^i$ holds. Then, by expanding the right side of the identity $(1+x)=(1+x)^{\frac{1}{d}}\dots(1+x)^{\frac{1}{d}}$,  and noticing that all terms of degree greater than 1 of the expansion must vanish, we obtain the following lemma.
\begin{lemma}\label{3}
	Let $d$ and $n$ be integers greater than $1$. Then $$\sum_{x_1+\dots+x_d=n} \binom{1/d}{x_1}\binom{1/d}{x_2}\dots\binom{1/d}{x_d}=0.$$
\end{lemma}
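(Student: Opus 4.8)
The plan is to read the identity as a coefficient extraction and to recognise that the left-hand side is the coefficient of $x^n$ in a $d$-fold product that telescopes to $1+x$. Working in the ring of formal power series $\mathbb{C}[[x]]$ (equivalently, analytically near $x=0$), set
$$S(x):=\sum_{i=0}^{\infty}\binom{1/d}{i}x^i,$$
which is well defined since $1/d$ is not a negative integer. By the definition of the Cauchy product, the coefficient of $x^n$ in $S(x)^d=S(x)\cdots S(x)$ ($d$ factors) is exactly
$$\sum_{\substack{x_1,\dots,x_d\ge 0\\ x_1+\dots+x_d=n}}\binom{1/d}{x_1}\binom{1/d}{x_2}\cdots\binom{1/d}{x_d},$$
the quantity to be shown to vanish. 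So the lemma is equivalent to the identity $S(x)^d=1+x$, after which one simply reads off that the coefficient of $x^n$ on the right is $0$ because $n\ge 2$.

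To establish $S(x)^d=1+x$ I would use the generalized Vandermonde convolution $\sum_{i+j=m}\binom{r}{i}\binom{s}{j}=\binom{r+s}{m}$, valid for all complex $r,s$ and all $m\ge 0$; applying it $d-1$ times with $r=s=1/d$ shows that the coefficient of $x^n$ in $S(x)^d$ equals $\binom{d\cdot(1/d)}{n}=\binom{1}{n}$, which is $1$ for $n\in\{0,1\}$ and $0$ for $n\ge 2$. Alternatively, and in the spirit of the remark preceding the statement, one may argue analytically: for real $x$ with $|x|<1$ the binomial series converges absolutely to $(1+x)^{1/d}$, so $S(x)^d=\big((1+x)^{1/d}\big)^d=1+x$ as genuine functions on $(-1,1)$, and since two power series representing the same function on an interval have identical coefficients, the identity $S(x)^d=1+x$ holds coefficientwise. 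Comparing the coefficient of $x^n$ with $n\ge 2$ on both sides then gives the claim.

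The only point requiring any care — the \emph{main obstacle}, such as it is — is the bookkeeping with infinite series: in the formal-power-series approach one must note that each coefficient of the $d$-fold Cauchy product is a finite sum (there are finitely many solutions of $x_1+\dots+x_d=n$ in nonnegative integers) and that reassociating the $d$ factors is legitimate in the commutative ring $\mathbb{C}[[x]]$; in the analytic approach one invokes the identity theorem for power series. Neither raises a genuine difficulty, and the content of the lemma is precisely the observation that the binomial expansion of $(1+x)=\big((1+x)^{1/d}\big)^d$ has vanishing coefficients in all degrees $\ge 2$.
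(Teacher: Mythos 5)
Your argument is correct and is essentially the paper's own proof: the lemma is obtained there precisely by expanding the identity $(1+x)=\bigl((1+x)^{1/d}\bigr)^d$ via the binomial series and observing that all coefficients in degree $\ge 2$ must vanish. Your added care about the Cauchy product and the identity theorem only makes explicit what the paper leaves implicit.
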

Leveraging these preliminary results, we can prove the main result of this Section, that is, a characterization for complex polynomials $P(T) \in \mathbb{C}[T]$ having at most five terms.
\begin{proposition}\label{4}
	Let $d \ge 2$ and $k \le 5$ be positive integers, and let $\xi_1,\dots,\xi_{k-1} \in \mathbb{C} \setminus \{0\}$ and $l_1 < l_2 < \dots < l_{k-1}$ be positive integers. Assume that $P(T) \in \mathbb{C}[T]$ is a complex polynomial such that $\displaystyle P(T)^d=1+\sum_{i=1}^{k-1} \xi_i T^{l_i}$. Then the following tables describe the admissible values for coefficients and exponents of $P(T)$ and $P(T)^d$.
	
	\begin{enumerate}
		\item If $k=5$:
		\begin{center}
			\scalebox{0.8}{
				\begin{tabular}{ | c | c | c | c | c | c | c | c|}
					
					\hline
					$d$ & $l_2$ & $l_3$ & $l_4$ & $\xi_2$ & $\xi_3$ & $\xi_4$ & $P(T)$ \\ \hline
					\rule[-2.5ex]{0pt}{6ex} $4$ & $2l_1$ & $3l_1$ & $4l_1$ & $\frac{3}{8}\xi_1^2$ & $\frac{1}{16}\xi_1^3$ & $\frac{1}{256}\xi_1^3$& $1+\frac{1}{4}\xi_1T^{l_1}$\\ \hline
					\rule[-2.5ex]{0pt}{6ex} $3$ & $3l_1$ & $5l_1$ & $6l_1$ & $-\frac{5}{27}\xi_1^3$ & $\frac{1}{81}\xi_1^5$ & $-\frac{1}{729}\xi_1^6$& $1+\frac{1}{3}\xi_1T^{l_1}-\frac{1}{9}\xi_1^2T^{2l_1}$\\ \hline
					\rule[-2.5ex]{0pt}{6ex} $2$ & $4l_1$ & $5l_1$ & $6l_1$ & $\frac{5}{64}\xi_1^4$ & $-\frac{1}{64}\xi_1^5$ & $\frac{1}{256}\xi_1^6$& $1+\frac{1}{2}\xi_1T^{l_1}-\frac{1}{8}\xi_1^2T^{2l_1}+\frac{1}{16}\xi_1^3T^{3l_1}$\\ \hline
					\rule[-2.5ex]{0pt}{6ex} $2$ & $3l_1$ & $5l_1$ & $6l_1$ & $-\frac{5}{32}\xi_1^4$ & $\frac{1}{256}\xi_1^5$ & $\frac{19}{1024}\xi_1^6$& $1+\frac{1}{2}\xi_1T^{l_1}-\frac{1}{8}\xi_1^2T^{2l_1}-\frac{1}{64}\xi_1^3T^{3l_1}$\\ \hline
					\rule[-2.5ex]{0pt}{6ex} $2$ & $4l_1$ & $7l_1$ & $8l_1$ & $\frac{7}{64}\xi_1^4$ & $-\frac{1}{512}\xi_1^7$ & $\frac{1}{4096}\xi_1^8$& $1+\frac{1}{2}\xi_1T^{l_1}-\frac{1}{8}\xi_1^2T^{2l_1}+\frac{1}{16}\xi_1^3T^{3l_1}+\frac{1}{64}\xi_1^4T^{4l_1}$\\ \hline
					\rule[-2.5ex]{0pt}{6ex} $2$ & $2l_1$ & $5l_1$ & $6l_1$ & $\frac{5}{4}\xi_1^2$ & $-\frac{1}{4}\xi_1^5$ & $\frac{1}{16}\xi_1^6$& $1+\frac{1}{2}\xi_1T^{l_1}+\frac{1}{2}\xi_1^2T^{2l_1}-\frac{1}{4}\xi_1^3T^{3l_1}$\\ \hline
			\end{tabular}}
			\captionof{table}{}\label{Tab4-1}
		\end{center}
		
		\begin{center}
			\scalebox{0.8}{
				\begin{tabular}{ | c | c | c | c | c | c | c | c|}
					\hline
					$d$ & $l_2$ & $l_3$ & $l_4$ & $\xi_2$ & $\xi_3$ & $\xi_4$ & $P(T)$ \\ \hline
					\rule[-2.5ex]{0pt}{6ex} $2$ & $2l_1$ & $3l_1$ & $4l_1$ &  & $-\frac{1}{8}\xi_1^3+\frac{1}{2}\xi_1\xi_2$ & $\left(-\frac{1}{8}\xi_1^2+\frac{1}{2}\xi_2\right)^2$& $1+\frac{1}{2}\xi_1T^{l_1}+\left(\frac{1}{2}\xi_2-\frac{1}{8}\xi_1^2\right)T^{2l_1}$\\ \hline
			\end{tabular}}
			\captionof{table}{}\label{Tab4-2}
		\end{center}
		\item If $k=4$:
		\begin{center}
			\begin{tabular}{ | c | c | c | c | c | c |}
				\hline
				$d$ & $l_2$ & $l_3$ & $\xi_2$ & $\xi_3$ & $P(T)$ \\ \hline
				\rule[-2.5ex]{0pt}{6ex}
				$2$ & $3l_1$ & $4l_1$ & $-\frac{1}{8}\xi_1^3$ &  $-\frac{1}{64}\xi_1^4$ & $1+\frac{1}{2}\xi_1T^{l_1}-\frac{1}{8}\xi_1^2T^{2l_1}$\\ \hline
				$3$ & $2l_1$ & $3l_1$ & $\frac{1}{3}\xi_1^2$ &  $\frac{1}{27}\xi_1^3$ & $1+\frac{1}{3}\xi_1T^{l_1}$\\ \hline
			\end{tabular}
			\captionof{table}{}\label{Tab3}
			
		\end{center}
		\item If $k=3$:
		\begin{center}\scalebox{0.8}{
				\begin{tabular}{ | c | c | c | c |}
					\hline
					$d$ & $l_2$ & $\xi_2$ & $P(T)$ \\ \hline
					\rule[-2.5ex]{0pt}{6ex}
					$2$ & $2l_1$ & $\frac{1}{4}\xi_1^2$ &  $1+\frac{1}{2}\xi_1T^{l_1}$\\ \hline
			\end{tabular}}
			\captionof{table}{}\label{Tab2}
			
		\end{center}
	\end{enumerate}
\end{proposition}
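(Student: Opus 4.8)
The forward direction (checking that each tabulated $P$ produces a power $P(T)^d$ with exactly $k$ terms) is a routine expansion, so the content is the converse: every $P$ with $P(T)^d$ having at most $5$ terms appears in one of the tables. The plan is to run a structured induction-free case analysis on $d$, on the number of terms of $P$, and on the gap pattern of its exponents, reducing everything to explicit coefficient comparisons. First I would invoke Proposition \ref{1}: it gives $d\le k-1$, and when $d=k-1$ it forces $P(T)=1+\tfrac{\xi_1}{d}T^{l_1}$, whose $d$-th power is computed at once — this yields Table \ref{Tab2}, the second row of Table \ref{Tab3}, and the first row of Table \ref{Tab4-1}. Since $d\ge 2$, the only cases left are $(k,d)\in\{(4,2),(5,2),(5,3)\}$. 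For these I would normalize with Remark \ref{2}(1), assuming $l_{\lceil (k-1)/2\rceil}\ge l_{k-1}/2$, and recover the configurations violating this normalization at the end via the $P\leftrightarrow Q$ transformation of that same remark (this is what produces, e.g., the $d=2$, $l_2=2l_1$ rows of Table \ref{Tab4-1}). Writing $P(T)=1+b_1T^{e_1}+\dots+b_{t-1}T^{e_{t-1}}$ with $0<e_1<\dots<e_{t-1}=\deg P$ and all $b_i\ne 0$, comparison of lowest-order terms in $P^d=1+\sum\xi_iT^{l_i}$ forces $e_1=l_1$ and $b_1=\xi_1/d$, and comparison of degrees forces $d\,e_{t-1}=l_{k-1}$.

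The next step is to bound the number of terms of $P$ and pin down the admissible exponent patterns. By Remark \ref{2}(2), either every exponent $e_i$ of $P$ is a multiple of $l_1$ — in which case replacing $T^{l_1}$ by a fresh indeterminate reduces us to $l_1=1$ — or the least exponent $r$ of $P$ not divisible by $l_1$ equals one of $l_2,\dots,l_{k-1}$, which combined with $e_1=l_1$ restricts the possible patterns drastically. The key finiteness mechanism is that the "extremal'' coefficients of $P^d$ — those at the few smallest exponents $0,l_1,\dots$ and, by the reciprocal-polynomial symmetry, at the few largest exponents near $d\,e_{t-1}$ — are (up to merging of colliding exponents) pure products of the leading and trailing coefficients $1,b_1,\dots,b_{t-1}$ and hence automatically nonzero. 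Consequently, unless several of these extremal exponents collide, $P^d$ already carries more than $k$ distinct surviving terms; each collision imposes an equation such as $e_2=2e_1$ or $e_1+e_{t-1}=2e_{t-2}$, and only finitely many collision patterns are consistent with $d\,e_{t-1}=l_{k-1}$ and with the normalization. In the residual bookkeeping — for instance ruling out that $P^d$ degenerates to a binomial, and tracking when intermediate coefficients of the formal $d$-th root $Q^{1/d}$ vanish — I would apply Lemma \ref{3} and its analogues for the generalized Vandermonde identity.

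Once the exponent pattern is fixed (after reduction it is always of the form $0,l_1,2l_1,\dots,ml_1$ with $m\le 4$), the problem becomes: expand $P^d=(1+R)^d=\sum_{j=0}^d\binom{d}{j}R^j$ with $R=P-1$, impose vanishing of the coefficients at the "missing'' exponents of $P^d$, and solve the resulting polynomial system for $b_2,\dots,b_{t-1}$ in terms of $b_1=\xi_1/d$. For $(k,d)=(4,2)$ this forces $P=1+\tfrac{\xi_1}{2}T^{l_1}-\tfrac{\xi_1^2}{8}T^{2l_1}$ (Table \ref{Tab3}, first row); for $(5,3)$ it forces $P=1+\tfrac{\xi_1}{3}T^{l_1}-\tfrac{\xi_1^2}{9}T^{2l_1}$ (Table \ref{Tab4-1}, second row); for $(5,2)$ one gets, according to how many of the coefficients at $2l_1,\dots$ survive, either the trinomial generic branch with $\xi_2$ a free parameter (Table \ref{Tab4-2}) or the finitely many special quadrinomial/quintinomial solutions of Table \ref{Tab4-1}. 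The main obstacle is precisely the case $(k,d)=(5,2)$: there $P$ may a priori have up to five terms and the exponent enumeration, together with the attendant cancellation conditions, is the most delicate part; the cases $(5,3)$ and $(4,2)$ are shorter because $\deg P$ is more tightly constrained by $d\,e_{t-1}=l_{k-1}$ and fewer collision patterns survive.
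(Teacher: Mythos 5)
Your proposal follows essentially the same route as the paper: Proposition \ref{1} to dispose of the $d=k-1$ cases, Remark \ref{2} both for the $l_{\lceil (k-1)/2\rceil}\ge l_{k-1}/2$ normalization (recovering the remaining rows via the reciprocal transformation) and for forcing the exponents of $P$ to be multiples of $l_1$, Lemma \ref{3} to certify non-vanishing of the relevant coefficients, and a final collision analysis among a short list of exponents that pins down the $(5,2)$ case — the paper merely cites \cite[Lemma 2.1]{CZ1} for $k\le 4$ where you would rederive it, which is an inessential difference. The only point to tighten is your claim that the near-extremal coefficients are "pure products" and hence automatically nonzero: the ones one step in from the ends (e.g.\ at degree $l_4-2l_1$) are sums of such products and require a sign argument, exactly as the paper supplies.
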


\begin{proof}
	
	The last two items are a reformulation of (\cite[Lemma 2.1]{CZ1}). 
	
	Assume $k=5$. By Proposition \ref{1} we have $d \le 4$. Hence we have to study the three cases $d = 2,3,4$: in each case, we will focus on determining $P(T)$, and then the parameters can be deduced from the expansion of $P(T)^d$. Moreover, thanks to Remark \ref{2}, we can assume that $l_2 \ge \frac{l_4}{2}$ first, and then deduce the remaining solutions.
	\\
	\fbox{$d=4$}. By Proposition \ref{1} we immediately obtain $P(T)=1+\frac{1}{4}\xi_1T^{l_1}$.
	\\
	\fbox{$d=3$}. Since $\deg P(T)	=\frac{l_4}{3} < \frac{l_4}{2} \le l_2$, from the second part of Remark \ref{2} it follows that there exist $m \ge 1$, positive integers $1=\sigma_1 < \sigma_2 < \dots < \sigma_m$ and $\alpha_1,\dots,\alpha_m \in \mathbb{C}\setminus \{0\}$ such that $\displaystyle P(T)=1+\sum_{i=1}^{m}\alpha_iT^{\sigma_il_1}$. Taking $\rho(T)=\xi_1+\xi_2T^{l_2-l_1}+\xi_3T^{l_3-l_1}+\xi_4T^{l_4-l_1}$ and comparing the terms of degree $\sigma_il_1$ of $P(T)$ with those of the binomial expansion $$(1+T^{l_1}\rho(T))^{\frac{1}{3}}=1+\sum_{i=1}^{+\infty} \binom{1/3}{i} T^{il_1}\rho(T)^i,$$ we have $\alpha_i=\binom{1/3}{\sigma_i}\xi_1^{\sigma_i}$. 
	
	Let $r$ be the smallest positive integer such that $P(T)$ has no term of degree $rl_1$ (clearly $rl_1 \le \frac{l_4}{3}+l_1$). From the minimality of $r$ we can deduce that the coefficient of degree $rl_1$ of $P(T)^3$ is equal to $\displaystyle\left[ \left( \sum_{p,q,s \in \mathbb{N}}^{p+q+s=r} \binom{1/3}{p}\binom{1/3}{q}\binom{1/3}{s}\right)-3\binom{1/3}{r} \right] \xi_1^{r}$, which is non-zero by Lemma \ref{3}. Therefore $rl_1 \ge l_2 \ge \frac{l_4}{2}$, which implies $\frac{l_4}{2} \le \frac{l_4}{3}+l_1$ and $l_4 \le 6l_1$. Hence $\deg P(T) \le 2l_1$, and since $P(T)$ must contain at least three different terms (since otherwise $P(T)^3$ would have at most four terms), then $P(T)=1+\frac{1}{3}\xi_1T^{l_1}-\frac{1}{9}\xi_1^2T^{2l_1}$.
	\\
	\fbox{$d=2$}. We have $\deg P(T)=\frac{l_4}{2} \le l_2$; let us distinguish two cases.
	\begin{itemize}
		\item \fbox{$l_2 > \deg P(T)$}. Arguing as in the case $d=3$, we can infer that $\displaystyle P(T)=1+\sum_{i=1}^m \alpha_i T^{\sigma_il_1}$, with $\alpha_i=\binom{1/2}{\sigma_i}\xi_1^{\sigma_i}$; actually $l_1|\gcd(l_2,l_3,l_4)$ and $l_1|\frac{l_4}{2}$, and moreover, since $P(T)$ has at least three terms whose degree is lower than $l_2$, we must have $l_2 \ge 3l_1$ and $l_4 \ge 6l_1$.
		
		Now, let $r$ be the smallest positive integer such that $P(T)$ has no term of degree $rl_1$. Then, as before, we easily obtain that $rl_1 \ge l_2 > \frac{l_4}{2}$. Therefore $m=\frac{l_4}{2l_1} \ge 3$ and $\displaystyle P(T)=1+\sum_{i=1}^m \alpha_iT^{il_i}$, with $\alpha_i=\binom{1/2}{i}\xi_1^i$.
		
		In this setting, $P(T)^2$ has terms of degree $0,l_1,l_4,l_4-l_1$. Moreover, the term of degree $\frac{l_4}{2}+l_1$ of $P(T)^2$ is equal to $$\displaystyle\left[ \left( \sum_{i \in \mathbb{N}}^{m+1} \binom{1/2}{i}\binom{1/2}{m+1-i}\right)-2\binom{1/2}{m+1} \right] \xi_1^{m+1},$$ which is non-zero by Lemma \ref{3}, and the term of degree $l_4-2l_1$ of $P(T)^2$ is $$\displaystyle\left(2\binom{1/2}{m}\binom{1/2}{m-2}+\binom{1/2}{m-1}^2\right)\xi_1^{2m-2},$$ which is again non-zero because $\binom{1/2}{m}$ and $\binom{1/2}{m-2}$ have the same sign (notice that $m \ge 3$).
		
		Hence, two integers belonging to the set  $\{0,l_1,\frac{l_4}{2}+l_1,l_4-2l_1,l_4-l_1,l_4\}$ must coincide. It is simple to show that the only possible equality is $\frac{l_4}{2}+l_1=l_4-2l_1$, which yields $l_4=6l_1$, $m=3$ and $P(T)=1+\frac{1}{2}\xi_1T^{l_1}-\frac{1}{8}\xi_1^2T^{2l_1}+\frac{1}{16}\xi_1^3T^{3l_1}$.
		\item \fbox{$l_2 = \deg P(T)$}. Again, following the proof of the case $d=3$, we can deduce that $P(T)$ is a finite subsum of the infinite sum $\displaystyle 1+\left(\sum_{i=1}^{+\infty} \alpha_i T^{il_1}\right)+\frac{1}{2}\xi_2T^{l_2}$, with $\alpha_i=\binom{1/2}{i}\xi_1^i$.
		
		First, assume that $l_1$ is not a divisor of $l_2$. Then, if $\sigma_ml_1$ is the greatest degree multiple of $l_1$ among the terms of $P(T)$, $P(T)^2$ must have terms of degree $2l_2,l_2+\sigma_ml_1,2\sigma_ml_1,l_2,l_1,0$  (which are all pairwise distinct), contradicting our hypotheses. Then $l_1 | l_2$.
		
		Next, consider the smallest positive integer $r$ such that $P(T)$ has no term of degree $rl_1$: arguing as above, we can easily obtain that $rl_1 \ge l_2$. Moreover, since $\deg P(T) = l_2$, by studying the term of degree $l_2$ of the expansion of $P(T)^2$ we obtain $$\displaystyle P(T)= 1+\left(\sum_{i=1}^m \alpha_i T^{il_1}\right)+\frac{1}{2}\xi_2T^{l_2}, \ \ m=\frac{l_4}{2l_1}, \ \ \alpha_i=\binom{1/2}{i}\xi_1^i.$$
		Therefore, it is straightforward to see that $P(T)^2$ has non-zero terms of degrees $0,l_1,l_2,2l_2-l_1,2l_2$ (which are all pairwise distinct): thus, these must be the only terms of $P(T)^2$.
		
		At this point, we will reach a contradiction by considering the terms of degree $l_2+l_1$ and $2l_2-2l_1$ obtained expanding the square $P(T)^2$. In fact:
		\begin{itemize}
			\item The coefficient of the term of degree $l_2+l_1$ is equal to $-2\binom{1/2}{m+1}\xi_1^{m+1}+\frac{1}{2}\xi_1\xi_2$, according to Lemma \ref{3}.
			\item The coefficient of the term of degree $2l_2-2l_1$ is $$\left[2\binom{1/2}{m}\binom{1/2}{m-2}+\binom{1/2}{m-1}^2\right]\xi_1^{2m+2}+\binom{1/2}{m-2}\xi_1^{m-2}\xi_2.$$
		\end{itemize}
		Clearly, if one of those coefficients is non-zero, the associated degree must be among the ones listed before. Thus we have two possible cases:
		\begin{enumerate}
			\item If at least one of those coefficients is non-zero, then the associated degree must belong to $\{0,l_1,l_2,2l_2-l_1,2l_2\}$. It is trivial to check that if the associated degree is $l_1+l_2$, we must have $l_1+l_2=2l_2-l_1$, while if this degree is $2l_2-2l_1$ then we must have $2l_2-2l_1=l_2$; in both cases we deduce from the equality that $l_2=2l_1$, that is, $P(T)=1+\frac{1}{2}\xi_1T^{l_1}+\left(\frac{1}{2}\xi_2-\frac{1}{8}\xi_1^2\right)T^{2l_1}.$
			\item If both coefficients are non-zero, we can solve for $\xi_2$ in one of the two equations, and then, substituting in the other one, we obtain $\displaystyle \xi_2=4\binom{1/2}{m+1}\xi_1^{ml_1}$ and $$4\binom{1/2}{m+1}=-2\binom{1/2}{m}-\binom{1/2}{m-1}\frac{\frac{1}{2}-m+2}{m-1},$$
			or equivalently, that $$\frac{4\left(\frac{1}{2}-m+1\right)\left(\frac{1}{2}-m\right)}{m(m+1)}=-\frac{2\left(\frac{1}{2}-m+1\right)}{m}-\frac{\frac{1}{2}-m+2}{m-1}.$$
			This is a cubic equation in $m$, whose roots are $\frac{1}{2},3,4$. Then, in this case we have the two solutions:
			\begin{itemize}
				\item $m=3$, $\xi_2=4 \binom{1/2}{4}\xi_1^{3l_1}$ and  $P(T)=1+\frac{1}{2}\xi_1T^{l_1}-\frac{1}{8}\xi_1^2T^{2l_1}-\frac{1}{64}\xi_1^3T^{3l_1}$.
				\item $m=4$, $\xi_2=4 \binom{1/2}{5}\xi_1^{4l_1}$  $P(T)=1+\frac{1}{2}\xi_1T^{l_1}-\frac{1}{8}\xi_1^2T^{2l_1}+\frac{1}{16}\xi_1^3T^{3l_1}+\frac{1}{64}\xi_1^4T^{4l_1}$.
			\end{itemize}
		\end{enumerate}
	\end{itemize}
	The previous cases describe all polynomials $P(T)$ whose power has five terms, and such that $l_2 \ge \frac{l_4}{2}$. Then, we only have to study the case $l_2 < \frac{l_4}{2}$. For that purpose, thanks to Remark \ref{2} we know that there is a polynomial $Q(T)$, associated to $P(T)$ and such that $Q(T)^d=1+\xi_1'T^{l_1'}+\xi_2'T^{l_2'}+\xi_3'T^{l_3'}+\xi_4'T^{l_4'}$, with $l_2' > \frac{l_4'}{2}$. Therefore, $Q(T)$ must be one of the solutions obtained in the previous cases; however, there is only one solution satisfying $l_2' > \frac{l_4'}{2}$, that is,  $Q(T)=1+\frac{1}{2}\xi_1'T^{l_1'}-\frac{1}{8}\xi_1'^2T^{2l_1'}+\frac{1}{16}\xi_1'^3T^{3l_1'}$, which yields  $P(T)=1+\frac{1}{2}\xi_1T^{l_1}+\frac{1}{2}\xi_1^2T^{2l_1}-\frac{1}{4}\xi_1^3T^{3l_1}$, thus concluding our proof.
\end{proof}

\section{Closed forms of linear recursions and Universal Hilbert Sets}
Let $A \subseteq \mathbb{C}$ be a ring. Denote by $\mathcal{E}_A$ the ring of complex functions defined over $\mathbb{N}$ of the form $\displaystyle \alpha(n)=\sum_{i=1}^k c_i \alpha_i^n$, with $k \ge 2$, $c_1,\dots,c_k \in \mathbb{Q}$ and $\alpha_1,\dots,\alpha_k \in A$. In this work, $A$ will usually be either $\mathbb{Z}$ or $\mathbb{Q}$; in these cases, we further denote by $\mathcal{E}_A^+$ the subring formed by functions having only positive roots $\alpha_i$. These functions are closed forms for linear recurrence relations of order $k$ having only simple roots, and the behaviour of these functions under composition and exponentiation is the main subject of several papers (see \cite{CZ5, FK, Y}). Our interest in this topic stems from the characterization presented in \cite{CZ5}, which in turn was motivated by a question posed by Yasumoto in \cite{Y}, asking whether the set $\{2^n+3^n\}$ is a \emph{Universal Hilbert Set}, that is, a set $H$ such that for every polynomial $P(X,Y) \in \mathbb{Q}[X,Y]$ irreducible over $\mathbb{Q}$, the specialized polynomial $P(h,Y) \in \mathbb{Q}[Y]$ is irreducible for every $h \in H$, except at most for a finite set of values.

Our work is based on the following characterization proved by Corvaja and Zannier.
\begin{theorem}[\protect{\cite[Theorem 4]{CZ5}}]\label{charHS}
	For $\alpha \in \mathcal{E}_{\mathbb{Z}}^+$, the following conditions are equivalent:
	\begin{enumerate}[(i)]
		\item $\alpha(\mathbb{N})$ is a Universal Hilbert Set;
		\item there exist no integer $d \ge 2$, a polynomial $P(X) \in \mathbb{Q}[X]$ of degree $d$ and an element $\beta \in \mathcal{E}_\mathbb{Z}$ such that $\alpha'=P(\beta)$, where $\alpha'(n)=\alpha(dn)$.
	\end{enumerate}
\end{theorem}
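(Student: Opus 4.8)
We treat the two implications separately; only the second is hard. \textbf{(i)$\Rightarrow$(ii).} We argue by contraposition, so assume that for some integer $d\ge 2$, some $P(X)\in\mathbb{Q}[X]$ of degree $d$ and some $\beta\in\mathcal{E}_{\mathbb{Z}}$ one has $\alpha(dn)=P(\beta(n))$ for every $n$. Consider $F(X,Y):=X-P(Y)\in\mathbb{Q}[X,Y]$: it is linear in $X$ with coprime coefficients, hence irreducible over $\mathbb{Q}$. For each $n$ the specialization $F(\alpha(dn),Y)=\alpha(dn)-P(Y)=P(\beta(n))-P(Y)$ has degree $d\ge 2$ in $Y$ and vanishes at the rational point $\beta(n)$, so it splits off a linear factor over $\mathbb{Q}$ together with a cofactor of degree $d-1\ge 1$, and is therefore reducible over $\mathbb{Q}$. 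Since $\alpha\in\mathcal{E}_{\mathbb{Z}}^+$ has dominant root an integer $\ge 2$, we have $\alpha(n)\to\infty$, so the integers $\alpha(dn)$ take infinitely many values, and $F$ witnesses that $\alpha(\mathbb{N})$ is not a Universal Hilbert Set.

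\textbf{(ii)$\Rightarrow$(i).} Again by contraposition: suppose $\alpha(\mathbb{N})$ is not a Universal Hilbert Set, so there is $F(X,Y)\in\mathbb{Q}[X,Y]$, irreducible over $\mathbb{Q}$, whose specialization $F(\alpha(n),Y)$ is reducible over $\mathbb{Q}$ for all $n$ in an infinite set $\mathcal{N}$. The first step is the classical reduction underlying Hilbert's irreducibility theorem: the admissible factorization shapes of a specialization of $F$ are governed by the subgroup lattice of the Galois group of $F$ over $\overline{\mathbb{Q}}(X)$, so a single shape recurs for infinitely many $n\in\mathcal{N}$, and after the usual normalizations this produces an auxiliary polynomial $G(X,Z)\in\mathbb{Q}[X,Z]$, irreducible over $\mathbb{Q}$ and monic of degree $d\ge 2$ in $Z$, together with $z_n\in\mathbb{Q}$ such that $G(\alpha(n),z_n)=0$ for all $n$ in an infinite subset of $\mathcal{N}$. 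Let $\mathcal{C}$ be the affine curve $G(x,z)=0$ with the projection $\pi\colon\mathcal{C}\to\mathbb{A}^1$, $(x,z)\mapsto x$, of degree $d$. The numbers $\alpha(n)=\sum_i c_i\alpha_i^n$ have uniformly bounded denominators, and then the rational root theorem applied to $G(\alpha(n),Z)$ gives the same for the $z_n$; hence the $(\alpha(n),z_n)$ are $S$-integral points of $\mathcal{C}$ for a fixed finite set $S$, and there are infinitely many of them, so by Siegel's theorem $\mathcal{C}$ has genus $0$ and at most two points at infinity in this model.

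Now the specific arithmetic of $\alpha$ enters. Writing out the relation $G(\alpha(n),z_n)=0$ and regarding the $\alpha_i^n$ as $S$-units, the Schmidt subspace theorem — this is the heart of \cite{CZ5} — forces a vanishing subsum, which unpacks into a rational parametrization $t\mapsto(\psi(t),\chi(t))$ of $\mathcal{C}$ under which $\pi$ becomes a polynomial $\psi$ of degree $d$, with $\alpha(n)=\psi(t_n)$ for infinitely many $n$ and each $t_n$ itself the value of a function in $\mathcal{E}$. Comparing the dominant exponential of $\alpha(n)$ with the leading monomial of $\psi$ shows that $t_n$ grows like a fixed root raised to the power $n/d$; passing to an arithmetic progression of step $d$ on which the resulting exponential identity holds and on which the roots involved become integral, and re-indexing that progression, one obtains an identity $\alpha(dn)=P(\beta(n))$ with $P=\psi$ of degree $d\ge 2$ and $\beta\in\mathcal{E}_{\mathbb{Z}}$, so that (ii) fails.

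The elementary half is the first implication; in the second, the obstacle is exactly the passage from ``infinitely many reducible specializations along the thin-looking set $\alpha(\mathbb{N})$'' to an \emph{exact} polynomial identity. Siegel's theorem pins down the genus and the number of points at infinity, but one must exploit that $\alpha(\mathbb{N})$ is the value set of a power sum — through the subspace theorem — to produce the parametrization, to verify that it is a genuine polynomial and not merely a Laurent polynomial, and to recognise the fibre coordinate $t_n$ as an $\mathcal{E}_{\mathbb{Z}}$-value on an arithmetic progression of exactly the required step $d$.
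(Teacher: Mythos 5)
This theorem is quoted verbatim from Corvaja and Zannier (\cite[Theorem 4]{CZ5}); the paper under review gives no proof of it, so there is no internal argument to compare yours against. Judged on its own terms, your implication (i)$\Rightarrow$(ii) is complete and correct: $F(X,Y)=X-P(Y)$ is of degree one in $X$ and primitive over $\mathbb{Q}[Y]$, hence irreducible; each specialization $F(\alpha(dn),Y)=P(\beta(n))-P(Y)$ acquires the rational root $\beta(n)$ and therefore a linear factor with a cofactor of degree $d-1\ge 1$; and the values $\alpha(dn)$ are infinitely many because $\alpha\in\mathcal{E}_{\mathbb{Z}}^+$ tends to infinity. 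This is exactly the elementary half.

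For (ii)$\Rightarrow$(i), what you have written is a road map of the Corvaja--Zannier proof rather than a proof. The reduction to a monic auxiliary polynomial $G(X,Z)$ with rational roots $z_n$ of bounded denominator, and the appeal to Siegel's theorem to force genus $0$ and at most two points at infinity, are standard and correctly placed. But the entire content of the theorem sits in the step you yourself label ``the heart of \cite{CZ5}'': using the $S$-unit structure of $\alpha(n)$ and the subspace theorem to extract a vanishing subsum, and then upgrading the infinite family of points $(\alpha(n),z_n)$ to an \emph{exact} identity $\alpha(dn)=P(\beta(n))$ --- in particular, showing that the parametrizing map is a genuine polynomial of degree exactly $d$, that the fibre coordinate is again a power sum with integer roots, and that the relevant arithmetic progression has step exactly $d=\deg P$. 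None of this is argued; it is only named. So your sketch correctly locates the difficulty but does not close it, and as a standalone proof it has a genuine gap at precisely that point. Since the paper itself treats the statement as an imported black box, citing \cite{CZ5} is the appropriate resolution here; just be aware that your text does not constitute an independent derivation of the hard implication.
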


This result allowed the authors to prove a generalization of Yasumoto's question. 
\begin{corollary}[\protect{\cite[Corollary 3]{CZ5}}]\label{indmul}
	Let $\displaystyle \alpha(n)=\sum_{i=1}^k c_i\alpha_i^n \in \mathcal{E}_{\mathbb{Z}}^+$ be such that $\alpha_1,\dots,\alpha_n \in \mathbb{Z}^+$ are multiplicatively independent. Then the set $\alpha(\mathbb{N})$ is a Universal Hilbert Set.
\end{corollary}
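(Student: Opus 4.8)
The plan is to apply the characterization of Theorem~\ref{charHS}: it suffices to prove that there exist no integer $d\ge 2$, polynomial $P(X)\in\mathbb{Q}[X]$ of degree $d$, and $\beta\in\mathcal{E}_{\mathbb Z}$ with $\alpha(dn)=P(\beta(n))$ for all $n$. So assume, for contradiction, that such a triple $(d,P,\beta)$ exists and write $\beta(n)=\sum_{j=1}^m b_j\beta_j^n$ with $b_j\in\mathbb{Q}\setminus\{0\}$ and distinct $\beta_j\in\mathbb{Z}\setminus\{0\}$. The decisive point is that $\alpha(dn)=\sum_{i=1}^k c_i(\alpha_i^d)^n$ still has \emph{multiplicatively independent} roots $\alpha_1^d,\dots,\alpha_k^d$, since the $\alpha_i$ are and $d\ge 1$; note also $k\ge 2$ and $c_i\ne 0$ by definition of $\mathcal{E}_{\mathbb Z}^+$.

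First I would convert the arithmetic identity into an identity of Laurent polynomials. Replacing $n$ by $2n'$, we still have $\alpha(2dn')=P(\beta^\ast(n'))$ for all $n'$, where $\beta^\ast(n'):=\beta(2n')=\sum_j b_j(\beta_j^2)^{n'}$ has positive roots (after collecting equal ones), which removes all sign technicalities; and $\alpha(2dn')=\sum_i c_i(\alpha_i^{2d})^{n'}$ still has multiplicatively independent roots. We may assume $\beta^\ast$ is non-constant, since $\alpha(2d\,\cdot)$ is not. Let $\gamma_1,\dots,\gamma_r$ be multiplicatively independent positive reals generating the multiplicative group of the roots of $\beta^\ast$, so $\beta^\ast(n')=g(\gamma_1^{n'},\dots,\gamma_r^{n'})$ for some $g\in\mathbb{C}[Z_1^{\pm1},\dots,Z_r^{\pm1}]$. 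Expanding $P(\beta^\ast(n'))=\sum_\delta e_\delta\delta^{n'}$ over the finitely many $\delta$ in $\langle\gamma_1,\dots,\gamma_r\rangle$, equating with $\sum_i c_i(\alpha_i^{2d})^{n'}$, and using that finitely many distinct geometric sequences are linearly independent, one gets that each $\alpha_i^{2d}$ lies in $\langle\gamma_1,\dots,\gamma_r\rangle$, say $\alpha_i^{2d}=M_i(\gamma_1,\dots,\gamma_r)$ for Laurent monomials $M_i$, together with the polynomial identity
\begin{equation*}
P\bigl(g(Z_1,\dots,Z_r)\bigr)=\sum_{i=1}^k c_i\,M_i(Z_1,\dots,Z_r).
\end{equation*}
Since the $\alpha_i^{2d}$ are multiplicatively independent, so are the monomials $M_1,\dots,M_k$, i.e.\ their exponent vectors $\vect v_1,\dots,\vect v_k$ are linearly independent; this is precisely a normal form of type~\eqref{mainEQ} (with $\rho=0$) for the composition $P\circ g$, as in the discussion following Question~\ref{mainQ}.

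It remains to show such an identity is impossible when $d\ge 2$ and $k\ge 2$; this is the heart of the argument. Because $\vect v_1,\dots,\vect v_k$ are linearly independent, one can choose $\vect w_1,\vect w_2\in\mathbb{Z}^r$ with $\langle\vect v_1,\vect w_1\rangle=N_1>0$, $\langle\vect v_2,\vect w_2\rangle=N_2>0$, and $\langle\vect v_i,\vect w_1\rangle=0$ for $i\ne 1$, $\langle\vect v_i,\vect w_2\rangle=0$ for $i\ne 2$. Applying the monomial substitution $Z_j\mapsto U^{(\vect w_1)_j}V^{(\vect w_2)_j}$ (a ring homomorphism $\mathbb{C}[Z_1^{\pm1},\dots,Z_r^{\pm1}]\to\mathbb{C}[U^{\pm1},V^{\pm1}]$ which commutes with the composition) sends $M_1\mapsto U^{N_1}$, $M_2\mapsto V^{N_2}$ and $M_i\mapsto 1$ for $i\ge 3$, so the identity becomes $P(\bar g(U,V))=c_1U^{N_1}+c_2V^{N_2}+C$ with $\bar g\in\mathbb{C}[U^{\pm1},V^{\pm1}]$ and $C$ constant. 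Differentiating, $P'(\bar g)\,\partial_U\bar g=c_1N_1U^{N_1-1}$ and $P'(\bar g)\,\partial_V\bar g=c_2N_2V^{N_2-1}$, and since the right-hand sides are units of the Laurent ring, $\partial_U\bar g$ and $\partial_V\bar g$ are nonzero monomials; hence $\bar g$ has exactly one monomial involving $U$ and exactly one involving $V$. If these coincide, say $\bar g$ contains $tU^aV^b$ with $a,b\ne 0$, then $P(\bar g)$ contains $p_dt^dU^{da}V^{db}$, a monomial depending on both variables, which is absent on the right-hand side — contradiction. Otherwise the two monomials are $AU^a$ and $BV^b$ with $a,b\ne 0$, $A,B\ne 0$, so $\bar g=AU^a+BV^b+C'$ with $C'$ constant; setting $R(s):=P(s+C')-C$ gives $R(AU^a+BV^b)=c_1U^{N_1}+c_2V^{N_2}$, and applying $\partial_U\partial_V$ yields $AaBb\,U^{a-1}V^{b-1}R''(AU^a+BV^b)=0$, forcing $R''\equiv 0$ and $d=\deg R\le 1$ — again a contradiction. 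Thus condition (ii) of Theorem~\ref{charHS} fails and $\alpha(\mathbb{N})$ is a Universal Hilbert Set.

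The step I expect to be the main obstacle is the passage from the purely arithmetic hypothesis $\alpha(dn)=P(\beta(n))$ to the clean polynomial identity $P(g)=\sum_i c_iM_i$: one must handle the signs of the $\beta_j$ (here by restricting to even $n$, which is harmless since we only need a contradiction and not a statement of the exact shape of (ii)) and the possible multiplicative dependence among the $\beta_j$ (absorbed by passing to an independent system $\gamma_1,\dots,\gamma_r$). After that, the multiplicative independence of $\alpha_1,\dots,\alpha_k$ is exactly what forces $M_1,\dots,M_k$ to be multiplicatively independent, and the remaining contradiction is the short differentiation argument above (one could alternatively feed the reduced identity into the lacunary-composition results developed in the rest of the paper, but it is not needed here).
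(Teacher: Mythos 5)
The paper does not prove this statement: it is imported verbatim as \cite[Corollary 3]{CZ5}, so there is no internal proof to compare against. Your argument is, as far as I can check, a correct and self-contained proof (modulo Theorem~\ref{charHS}, which the paper also imports). Your reduction of the arithmetic identity $\alpha(dn)=P(\beta(n))$ to the Laurent-polynomial identity $P(g(Z))=\sum_i c_iM_i(Z)$ with multiplicatively independent monomials $M_i$ parallels the discussion the paper gives in Section~2 (passing to a multiplicatively independent basis of the roots and using linear independence of distinct geometric progressions), and your handling of signs by restricting to even $n$ is legitimate since only a contradiction is needed. The genuinely new content is your direct impossibility proof for the resulting ``$\rho=0$'' identity: projecting onto two variables via the dual vectors $\vect w_1,\vect w_2$, reading off from $P'(\bar g)\,\partial_U\bar g=c_1N_1U^{N_1-1}$ that $\partial_U\bar g$ and $\partial_V\bar g$ are units of the Laurent ring, and killing the remaining case with $\partial_U\partial_V$. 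This is worth having explicitly, because the paper's own Remark~\ref{rho0} (the $\rho=0$ case of Question~\ref{mainQ}) is justified there by appeal to this very corollary, so one could not use that remark here without circularity; your differentiation argument breaks that circle and in fact gives an independent proof of Remark~\ref{rho0} as well. The only points I would tighten are cosmetic: state explicitly that multiplicative independence forces $k\le r$ and all $\alpha_i\ge 2$ (so $\alpha(2d\,\cdot)$ is non-constant), and that in your case~(b) the uniqueness of the $U$-monomial and of the $V$-monomial forces each to be a pure power of a single variable.
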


Write $\alpha(n)=\sum_{i=1}^k c_i\alpha_i^n \in \mathcal{E}_\mathbb{Z}^+$, and assume that $\alpha(\mathbb{N})$ is \textbf{not} a Universal Hilbert Set. Therefore, by the previous result there must exist a polynomial $P(T) \in \mathbb{Q}[T]$ of degree $d$ and $\beta(n)=\sum_{j=1}^h f_j\beta_j^n \in \mathcal{E}_\mathbb{Z}$ such that $\alpha'=P(\beta)$, where $\alpha'(n)=\alpha(dn)$, thus obtaining an identity of the form
\begin{equation}\label{charP1}
P(\beta)=\sum_{i=1}^k c_i\alpha_i^{dn}=\sum_{i=1}^k c_i (\alpha_i^d)^n.
\end{equation}

We can choose among the integers $\beta_1,\ldots,\beta_h$ a subset of multiplicatively independent integers with maximal cardinality $\sigma \ge 1$, which we can rename for simplicity $\{\beta_1,\ldots,\beta_\sigma\}$. Then, the maximality condition guarantees that if we add another element $\beta_{\sigma+1}$ to this subset, the elements of the new set will not be multiplicatively independent, and must therefore satisfy a relation of the form $\displaystyle \prod_{i=1}^{\sigma+1} \beta_i^{m_i}=1$ for some suitable integers $m_1,\ldots,m_{\sigma+1} \in \mathbb{Z}$. Repeating these steps for all other elements $\beta_{\sigma+1},\ldots,\beta_h$ we obtain a set of equations of the form
\begin{equation}\label{charP2}
\beta_i^{m_{ii}}=\beta_1^{m_{i1}}\beta_2^{m_{i2}}\dots\beta_\sigma^{m_{i\sigma}}
\end{equation}
for $i=\sigma+1,\dots,h$, and $m_{i1},\dots,m_{i\sigma},m_{ii} \in \mathbb{Z}$.

On the other hand, from identity (\ref{charP1}), we obtain that each element of the form $\alpha_i^d$ must be a term of the expansion of $P(\beta)$, and can be thus expressed as a monomial in $\beta_1,\dots,\beta_h$, which, in light of (\ref{charP2}), is actually a monomial in $\beta_1,\dots,\beta_\sigma$, yielding something of the form
\begin{equation}\label{charP3}
\alpha_i^d=\beta_1^{v_{i1}}\beta_2^{v_{i2}}\dots\beta_\sigma^{v_{i\sigma}},
\end{equation}

for $i=1,\ldots,k,$ and $v_{i1},\dots,v_{i\sigma} \in \mathbb{Q}$.

In order to turn (\ref{charP1}) in a polynomial identity, we can send the term associated to each element of our subset $\{\beta_1,\dots,\beta_\sigma\}$ to a power $(\beta_j)^n \mapsto Y_j^{r_j}$. Hence, we can deduce from (\ref{charP2}) and (\ref{charP3}) the image $(\beta_i)^{m_{ii}} \mapsto T_i(Y_1,\dots,Y_\sigma)$, and that there exists $M \in \mathbb{Z}^+$ such that $(\alpha_i^d)^M$ has integer exponents in the terms of equation (\ref{charP3}), and hence $[(\alpha_i^d)^M]^n \mapsto R_i(Y_1,\dots,Y_\sigma)$ (notice that the exponents of the monomials $T_i$ and $R_i$ might be negative). Next, we choose integers $r_j$ such that we can pick monomials $\tilde{T}_i(Y_1,\dots,Y_\sigma)$ and $\tilde{R}_i(Y_1,\dots,Y_\sigma)$ satisfying $\tilde{T}_i(Y_1,\dots,Y_\sigma)^{m_{ii}}=T_i(Y_1,\dots,Y_\sigma)$ and $\tilde{R}_i(Y_1,\dots,Y_\sigma)^M=R_i(Y_1,\dots,Y_\sigma)$, thus obtaining $(\beta_i)^n \mapsto \tilde{T}_i(Y_1,\dots,Y_\sigma)$ and $(\alpha_i^d)^n \mapsto \tilde{R}_i(Y_1,\dots,Y_\sigma)$.

Using the described map, $\beta(n)$ becomes a certain polynomial $G(Y_1,\dots,Y_\sigma)$, while identity (\ref{charP1}) yields a polynomial composition $$P(G(Y_1,\dots,Y_\sigma))=\sum_{i=1}^k c_i\tilde{R}_i(Y_1,\dots,Y_\sigma).$$

Clearly, we can choose a set $\mathcal{M}$ of multiplicative independent monomials with maximal cardinality $\sigma$, and map these monomials to a set of indeterminates $Y_1,\ldots,Y_\sigma$ such that each element of $\mathcal{M}$ becomes a power $Y_i^{l_i}$, where the exponents $l_i$ are chosen so that all exponents of our composition are still in $\mathbb{Z}$. With this change of variables, we can easily see that this problem actually asks for solutions of our main question.

Let $k=\sigma+\rho$ be the number of terms of the composition $f(g(X_1,\dots,X_\sigma))$. In the context given by Question \ref{mainQ}, Corollary \ref{indmul} deals with the case $\rho=0$; thus, in order to extend their result, we have to study Question \ref{mainQ} for fixed (small) values of $\rho$.

Also, it is worth noticing that all invariants related to our study are depending only on the cardinality $\sigma$ of the set of multiplicatively independent elements chosen. Thus, in some steps, we can pick a suitable set (with the same cardinality) of elements without losing information. Therefore in our results we will only list a subset of solutions such that any other solution can be obtained with an appropriate change of variables (dictated by the set of multiplicatively independent elements used).

\section{Small values of $\rho$}
Here, we will investigate the equation 		
\begin{equation}\label{mainEQ}
f(g(\vect{X}))=a_1X_1^{l_1}+\ldots+a_\sigma X_\sigma^{l_\sigma}+a_{\sigma+1}T_1(\vect{X})+\ldots+a_{\sigma+\rho}T_\rho(\vect{X})  \tag{1}
\end{equation}
for small values of $\rho$.
\begin{remark}\label{rho0}
	The only solution of equation $(*)$ for $\rho=0$ is $\sigma=1$, $f(T)=T^s$, $g(X_1)=cX_1^a$, since a solution with $\rho=0$ and $\sigma > 1$ would produce a counterexample to Corollary \ref{indmul}.
\end{remark}

%\`u

Take a polynomial composition $f(g(X_1,\dots,X_\sigma))$ with at least $\sigma+1$ terms (i.e. $\rho \ge 1$). This composition must include some monomials $T(X_1,\dots,X_\sigma)$. We focus on the case where there are few such monomials. 

Our approach is recursive. Starting from the equation 
\begin{equation}
f(g(\vect{X}))=a_1X_1^{l_1}+\dots+a_\sigma X_\sigma^{l_\sigma}+a_{\sigma+1}T_1(X_1,\dots,X_\sigma)+\dots+a_{\sigma+\rho}T_\rho(X_1,\dots,X_\sigma)\tag{*}
\end{equation}
for a fixed value of $\rho$, we look for a specialization for one variable, say, $X_\sigma$, as a function of the others, such that equation $(*)$ is reduced to the equation
\begin{equation}
f(\tilde{g}(\vect{X}))=a_1X_1^{l_1}+\dots+a_{\sigma'} X_{\sigma'}^{l_{\sigma'}}+a_{{\sigma'}+1}T_1(X_1,\dots,X_{\sigma'})+\dots+a_{{\sigma'}+{\rho'}}T_{\rho'}(X_1,\dots,X_{\sigma'})\tag{**}
\end{equation}
where both parameters $\sigma'$ and $\rho'$ are strictly lower than $\sigma$ and $\rho$. Thus, assuming that equation $(**)$ has already been solved for all values of $\rho'$ up to $\rho-1$, we can deduce from that solution the polynomials $f(T)$ and $\tilde{g}(X_1,\dots,X_{\sigma'})$, and deduce the value of $\sigma$ from $\sigma'$, $\rho'$ and our specialization. With this information we try to deduce the inner polynomial $g(X_1,\dots,X_\sigma)$, solving our equation. 

We use this approach to deal with the cases $\rho=1,2$.

\begin{proposition}\label{rho1}
	Let $\sigma \ge 1$ be an integer, $g(X_1,\dots,X_\sigma) \in \mathbb{C}[X_1^{\pm 1},\dots,X_\sigma^{\pm 1}]$ be a Laurent polynomial in the indeterminates $\vect{X} = (X_1,\dots,X_\sigma)$, and let $f(T) \in \mathbb{C}[T]$ be such that
	$$
	f(g(\vect{X}))=a_1X_1^{l_1}+\dots+a_\sigma X_\sigma^{l_\sigma}+a_{\sigma+1}T_1(\vect{X}),
	$$
	with $l_1,\dots,l_\sigma$ positive integers, $a_1,\dots,a_{\sigma+1} \in \mathbb{C}$ and $T_1 \in \mathbb{C}[X_1^{\pm 1},\dots,X_\sigma^{\pm 1}]$ monomial in $X_1,\dots,X_\sigma$. 
	
	Then, up to a suitable change of variables, we have one of the following:
	\begin{enumerate}
		\item $\sigma=1$, $f(T)=T^{m_1}+cT^{m_2}$, $g(X_1)=\sqrt[S]{a_1}X_1^r$, with $m_1r=l_1$, $m_2r=l_2$, $c=a_2a_1^{-\frac{m_2}{m_1}}$.
		\item $\sigma=2$, $f(T)=T^2$, $g(X_1,X_2)=\sqrt{a_1}X_1^{\frac{l_1}{2}}+\sqrt{a_2}X_2^{\frac{l_2}{2}}$.
	\end{enumerate}
\end{proposition}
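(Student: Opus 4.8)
The plan is to run the recursive scheme sketched above: treat the base case $\sigma=1$ by hand, and for $\sigma\ge 2$ exhibit a one‑variable specialization that reduces $(*)$ to an instance of $(**)$ with $\rho'=0$ --- a case already settled by Remark \ref{rho0}. Throughout I take $f$ monic of degree $d:=\deg f\ge 2$ (for $d=1$ the identity is trivial), and I write $\vect t\in\mathbb{Z}^\sigma$ for the exponent vector of the monomial $T_1$ and $\vect e_1,\dots,\vect e_\sigma$ for the standard basis of $\mathbb{Z}^\sigma$; note $T_1\ne X_i^{l_i}$ for every $i$, since the $\sigma+1$ terms of $(*)$ are pairwise distinct.

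For $\sigma=1$, write $T_1=X_1^{l_2}$, so that $(*)$ says the Laurent polynomial $f(g(X_1))=a_1X_1^{l_1}+a_2X_1^{l_2}$ has exactly two terms. Factoring $f=\prod_i(T-\theta_i)^{\mu_i}$ over $\mathbb{C}$ gives $f(g)=\prod_i(g-\theta_i)^{\mu_i}$; since a Laurent binomial has only simple zeros on $\mathbb{C}^*$, every $\mu_i$ attached to a value attained by $g$ must be $1$ and the corresponding $g-\theta_i$ must have simple zeros. If $g$ omits some root of $f$, that $g-\theta_i$ has no zeros on $\mathbb{C}^*$ and is therefore a monomial, so $g$ is a monomial plus a constant; otherwise, comparing the width of $f(g)$ (which equals $d$ times the width of $g$) with $\sum_i\deg(g-\theta_i)$, together with Propositions \ref{1} and \ref{4}, forces $g$ to be a monomial up to the relevant change of variables. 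Once $g=a_1^{1/m_1}X_1^{r}$ is a monomial, $f(g)=\sum_e f_e\,a_1^{e/m_1}X_1^{re}$ has exactly $\#\{e:f_e\ne 0\}$ terms, so $f$ has precisely two, $f=T^{m_1}+cT^{m_2}$; matching exponents and coefficients gives $m_1r=l_1$, $m_2r=l_2$, $c=a_2a_1^{-m_2/m_1}$ --- that is, case (1).

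Now let $\sigma\ge 2$. Provided $T_1\ne X_1^{l_1}\cdots X_\sigma^{l_\sigma}$ (i.e.\ $\vect t\ne(l_1,\dots,l_\sigma)$), I pick --- after relabeling --- the index $\sigma$ with $t_\sigma\ne l_\sigma$, and substitute $X_\sigma=c\,X_1^{b_1}\cdots X_{\sigma-1}^{b_{\sigma-1}}$ with $(b_1,\dots,b_{\sigma-1})$ proportional to $(t_1,\dots,t_{\sigma-1})$, so that the images of $a_\sigma X_\sigma^{l_\sigma}$ and of $a_{\sigma+1}T_1$ become the same monomial in $X_1,\dots,X_{\sigma-1}$, and with $c$ chosen so that their coefficients cancel; this is possible because $t_\sigma\ne l_\sigma$ (the possible non-integrality of the $b_i$ being absorbed into a change of variables, and the degenerate cases with some $a_i=0$ treated separately). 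Then $(*)$ collapses to an instance of $(**)$ with $\sigma'=\sigma-1$ and $\rho'=0$, so by Remark \ref{rho0} necessarily $\sigma-1=1$, i.e.\ $\sigma=2$, and $f=T^{d}$. Returning to $(*)$, the trinomial $d$-th power $g(X_1,X_2)^{d}=a_1X_1^{l_1}+a_2X_2^{l_2}+a_3T_1$, restricted to a generic monomial curve, is covered by Proposition \ref{1}, which forces $d=2$, hence $f=T^2$; and Proposition \ref{4} (the $k=3$ row, Table \ref{Tab2}) then forces $g=u\,\vect X^{\vect p}+v\,\vect X^{\vect q}$ to be a binomial (a monomial is excluded, its square having a single term). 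Squaring gives $\{2\vect p,\ \vect p+\vect q,\ 2\vect q\}=\{l_1\vect e_1,\ l_2\vect e_2,\ \vect t\}$; comparing with the coordinate axes --- and using the freedom in which maximal multiplicatively independent set plays the role of the pure powers --- forces $\vect p=\tfrac{l_1}{2}\vect e_1$ and $\vect q=\tfrac{l_2}{2}\vect e_2$, so $g=\sqrt{a_1}\,X_1^{l_1/2}+\sqrt{a_2}\,X_2^{l_2/2}$ with $a_3=2\sqrt{a_1a_2}$, which is case (2). Finally, the set-aside configuration $\vect t=(l_1,\dots,l_\sigma)$ cannot occur for $d\ge 2$: then the segment $[l_1\vect e_1,\,l_2\vect e_2]$ is an edge of the Newton polytope $\mathcal{N}(f(g))=d\,\mathcal{N}(g)$ carrying only its two endpoint monomials of $f(g)$, so the corresponding edge-restriction $g_E$ of $g$ is not a monomial while $g_E^{\,d}$ has just two terms --- contradicting Proposition \ref{1}.

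The step I expect to be the main obstacle is the recursion for $\sigma\ge 2$: verifying that one monomial specialization of a single variable always reduces $(*)$ to an instance of $(**)$ with both parameters strictly smaller, uniformly in the shape of $T_1$, while honestly accounting for the exponent rescalings and coefficient degeneracies folded into ``up to a change of variables''. The base case $\sigma=1$ is delicate for a different reason --- genuinely two-sided Laurent phenomena, such as $f=T^2-2\gamma\delta$ with $g=\gamma X_1+\delta X_1^{-1}$, occur there and must be reconciled with case (1) through the normalizations in force.
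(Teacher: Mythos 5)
Your reduction for $\sigma\ge 2$ is essentially the paper's own argument: cancel $a_\sigma X_\sigma^{l_\sigma}$ against $a_{\sigma+1}T_1$ by a monomial specialization of $X_\sigma$ (possible exactly when $t_\sigma\ne l_\sigma$), invoke Remark \ref{rho0} to force $\sigma-1=1$ and $f(T)=T^s$, and then push the trinomial power $g(X_1,X_2)^s=a_1X_1^{l_1}+a_2X_2^{l_2}+a_3T_1$ through a monomial substitution into the univariate classification of Proposition \ref{4} ($s=2$, $g$ a binomial). Two points differ. First, the subcase $T_1=X_1^{l_1}\cdots X_\sigma^{l_\sigma}$ is handled in the paper by the specialization $Y_\sigma=0$ after $Y_i=X_i^{l_i}$, whereas you argue via Newton polytopes; but your identity $\mathcal{N}(f(g))=d\,\mathcal{N}(g)$ and the claim that the edge restriction of $f(g)$ equals $g_E^{\,d}$ both presuppose that the lower powers $g^j$, $j<d$, contribute nothing to the relevant face. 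That is exactly what fails when $f$ has lower-order terms and $g$ has exponents of mixed sign, and you give no argument for it.

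The genuine gap is the base case $\sigma=1$, which you do not actually prove. The branch ``$g$ omits a root of $f$, hence $g$ is a monomial plus a constant'' is left hanging, and it cannot simply be dismissed: $f(T)=T^2-\theta T$ with $g(X_1)=cX_1+\theta$ gives $f(g)=c^2X_1^2+c\theta X_1$, an instance of the hypothesis in which $g$ is not a monomial (it becomes case (1) only after also translating the intermediate variable $T$, a normalization neither you nor the paper puts in force). The other branch is settled only by the phrase ``comparing the width of $f(g)$ with $\sum_i\deg(g-\theta_i)$, together with Propositions \ref{1} and \ref{4}''; those propositions classify pure powers $P(T)^d$, not general compositions $f(P)$, and no comparison is actually carried out. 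Most tellingly, your own closing example $f=T^2-2\gamma\delta$, $g=\gamma X_1+\delta X_1^{-1}$, with $f(g)=\gamma^2X_1^2+\delta^2X_1^{-2}$, satisfies the hypothesis (take $T_1=X_1^{-2}$) while $g$ is a genuine binomial, and no change of variables of the kind the paper permits turns it into conclusion (1); you flag that it ``must be reconciled'' but never perform the reconciliation. For what it is worth, the paper's own treatment of $\sigma=1$ (comparing the extremal degrees $m_1D_1$, $(m_1-1)D_1+D_2$ and $m_2d_1$) tacitly assumes these degrees each occur in only one term of the expansion, which requires $D_2>0$ and excludes precisely such Laurent and constant-term configurations --- so you have located a real soft spot, but identifying an obstruction is not the same as closing it.
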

\begin{proof}
	First, assume that $\sigma=1$, that is,  $f(g(X_1))=a_1X_1^{l_1}+a_2X_1^{l_2}$. Let $D_1,D_2, d_1$ be respectively, the maximum, second maximum and minimum degree of the polynomial $g$. Similarly, let $m_1$ and $m_2$ be, respectively, the maximum and minimum degree of $f$ (hence $D_1 \ge D_2 \ge d_1$ and $m_1 \ge m_2$). We can expand $f(g(X_1))$ as $$f(g(X_1))=\sum_{j \in I_f} f_j \left(\sum_{i \in I_g}g_i X_1^i \right)^j,$$
	and the expansion on the right has only one term for each of the degrees $m_1D_1$, $(m_1-1)D_1+D_2$ and $m_2d_1$ (or $m_1d_1$ if $d_1 < 0$); thus, at least two among those degrees must coincide. However, we can easily show that this can happen only if $D_1=D_2=d_1$, that is, if $g(X_1)$ is a monomial $bX_1^r$, and $f(T)=T^{m_1}+cT^{m_2}$. Thus, we obtain $b=\sqrt[s]{a_1}$ and $m_1r=l_1, m_2r=l_2$, $c_2b^s=a_2$.
	
	Assume then $\sigma > 1$, and write $T_1(X_1,\dots,X_\sigma)=X_1^{v_{1}}\dots X_{\sigma}^{v_{\sigma}}$. We look for a specialization, depending on the values of $v_i$:
	\begin{enumerate}
		\item If there is an index $i$ such that $l_i \neq v_i$, hence, assuming that this index is $\sigma$, we impose the identity $a_\sigma X_\sigma^{l_\sigma} = - a_{\sigma+1}T_1(X_1,\dots,X_{\sigma})$. Since $l_\sigma \neq v_\sigma$, this equation yields a specialization of the form $X_\sigma = \tilde{T}(X_1,\dots,X_{\sigma-1})$ (where we can assume that the exponents of $\tilde{T}$ are integers, up to changing the variables $X_i$ with some suitable roots). 
		\item If $l_i=v_i$ for every $i=1,\dots,\sigma$, then the change of variables $Y_i=X_i^{l_i}$ maps the equation in $$f(g(Y_1,\dots,Y_{\sigma}))=a_1Y_1+\dots+a_\sigma Y_{\sigma}+a_{\sigma+1}Y_1\dots Y_\sigma.$$
		In this case, we choose $Y_\sigma=0$.
	\end{enumerate} 
	
	In both cases, the specialization is such that the original equation is reduced to the identity
	$$f(\tilde{g}(X_1,\dots,X_{\sigma-1}))=a_1X_1^{l_1}+\dots+a_{\sigma-1} X_{\sigma-1}^{l_{\sigma-1}}.$$
	Then, from Remark \ref{rho0} it follows that  $\sigma-1=1$, $f(T)=T^s$ and $\tilde{g}(X_1)=bX_1^r$. Hence, $$f(g(X_1,X_2))=g(X_1,X_2)^s=a_1X_1^{l_1}+a_2X_2^{l_2}+a_3 T_1(X_1,X_2).$$
	
	Define a function $\varphi: \mathbb{C}[X_1,X_2] \rightarrow
	\mathbb{C}[T]$, depending on $T_1(X_1,X_2)$, defined by the images of the two indeterminates $\varphi(X_1)=T^{m_1}$ and $\varphi(X_2)=T^{m_2}$, such that all terms involved in the expansion of $g(X_1,X_2)^s$ are mapped to different terms. Hence, with this mapping, the previous equation becomes 
	$$P(T)^s=a_1T^{\omega_1}+a_2T^{\omega_2}+a_3T^{\omega_3}.$$
	
	It has been proved (see Proposition \ref{4}, or \cite[Lemma 2.1]{CZ1}) that the only solution of this equation is $s=2$ and $P(T)$ is a binomial. Therefore, $g(X_1,X_2)$ is also a binomial, where the two terms are multiplicatively independent, and there is no cancellation in the expansion of $g(X_1,X_2)^2$. Hence, it suffice to apply a suitable change of variables such that those two multiplicatively independent terms become powers of the two indeterminates $X_1$ and $X_2$ (with a slight notation abuse) to conclude that $g(X_1,X_2)=\sqrt{a_1}X_1^{\frac{l_1}{2}}+\sqrt{a_2}X_2^{\frac{l_2}{2}}$.
\end{proof}
\begin{proposition}\label{rho2}
	Let $\sigma \ge 2$ be an integer, $g(X_1,\dots,X_\sigma) \in \mathbb{C}[X_1^{\pm 1},\dots,X_\sigma^{\pm 1}]$ a Laurent polynomial in the indeterminates $\vect{X} = (X_1,\dots,X_\sigma)$, and let $f(T) \in \mathbb{C}[T]$ be such that
	\begin{equation}
	f(g(\vect{X}))=a_1X_1^{l_1}+\dots+a_\sigma X_\sigma^{l_\sigma}+a_{\sigma+1}T_1(\vect{X})+a_{\sigma+2}T_2(\vect{X}) \tag{*},
	\end{equation}
	with $l_1,\dots,l_\sigma$ positive integers, $a_1,\dots,a_{\sigma+2} \in \mathbb{C}$ and $T_1, T_2 \in \mathbb{C}[X_1^{\pm 1},\dots,X_\sigma^{\pm 1}]$ distinct monomials in $X_1,\dots,X_\sigma$. 
	
	Then, up to a suitable change of variables, we have one of the following:
	\begin{enumerate}
		\item $\sigma=2$,  $g(X_1,X_2)=\sqrt[3]{a_1}X_1^{\frac{l_1}{3}}+\sqrt[3]{a_2}X_2^{\frac{l_2}{3}}$, $f(T)=T^3$.
		\item $\sigma=2$,  $g(X_1,X_2)=\sqrt{a_1}X_1^{\frac{l_1}{2}}+\sqrt{a_2}X_2^{\frac{l_2}{2}}+i\sqrt[4]{4a_1a_2}X_1^{\frac{l_1}{4}}X_2^{\frac{l_2}{4}}$, $f(T)=T^2$.
	\end{enumerate}
\end{proposition}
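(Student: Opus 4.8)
The plan is to mimic the recursive strategy used for $\rho=1$, reducing equation $(*)$ with $\rho=2$ to the already-solved cases $\rho'\le 1$ via a well-chosen specialization of one variable, and then reconstructing $g$ from the resulting data. First I would treat the base step: if $\sigma=2$, then $f(g(X_1,X_2))$ has $\sigma+2=4$ terms, and after applying a map $\varphi$ sending the two multiplicatively independent monomials in the expansion of $g$ to $T^{m_1},T^{m_2}$ so that no cancellation occurs, the identity becomes $P(T)^s = a_1T^{\omega_1}+a_2T^{\omega_2}+a_3T^{\omega_3}+a_4T^{\omega_4}$, a lacunary $d$th power with exactly four terms. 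By Proposition~\ref{4}(2) (the $k=4$ table) the only possibilities are $s=2$ with $P(T)=1+\tfrac12\xi_1T^{l_1}-\tfrac18\xi_1^2T^{2l_1}$ and $s=3$ with $P(T)=1+\tfrac13\xi_1T^{l_1}$; pulling these back through $\varphi$ and renaming the two multiplicatively independent terms as powers of $X_1$, $X_2$ gives exactly the two stated forms of $g$ (the $s=3$ binomial case yielding item (1), the $s=2$ trinomial case yielding item (2) after checking the coefficient $i\sqrt[4]{4a_1a_2}$ by direct expansion of the square).

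Next I would handle $\sigma\ge 3$ by specialization. Writing $T_1=\prod X_i^{v_i}$ and $T_2=\prod X_i^{w_i}$, the goal is to specialize $X_\sigma$ as a Laurent monomial in $X_1,\dots,X_{\sigma-1}$ so that one pure-power term, say $a_\sigma X_\sigma^{l_\sigma}$, is cancelled against one of $a_{\sigma+1}T_1$ or $a_{\sigma+2}T_2$, collapsing $(*)$ to an instance of $(**)$ with $\sigma'=\sigma-1$ and $\rho'\le 1$. As in Proposition~\ref{rho1}, if some exponent of $T_j$ in the variable $X_\sigma$ differs from $l_\sigma$ we can solve $a_\sigma X_\sigma^{l_\sigma}=-a_{\sigma+j}T_j$ for $X_\sigma$ (up to replacing the $X_i$ by roots to keep exponents integral); if instead every $T_j$ agrees with the pure powers in all exponents, the change of variables $Y_i=X_i^{l_i}$ makes $T_1,T_2$ into squarefree monomials in the $Y_i$ and we may set $Y_\sigma=0$. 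After specialization we invoke the already-established Remark~\ref{rho0} and Proposition~\ref{rho1}: the reduced identity forces either $\sigma'=1$ (so $\sigma=2$, contradicting $\sigma\ge 3$) or one of the two $\rho'=1$ outcomes, each of which has $\sigma'\le 2$, hence $\sigma\le 3$. This pins $\sigma=3$ as the only remaining case, and I would then need to show it is impossible: from $f(T)=T^2$ (or $T^s$ with $s$ small) and the known form of $\tilde g$ in two variables, $g(X_1,X_2,X_3)^s$ would have to be a four-term Laurent polynomial whose restriction is a recognised solution, and a term-counting / Newton-polytope argument shows a genuinely three-variable $g$ cannot produce only $\sigma+2=5$ terms — any such $g$ has at least three multiplicatively independent monomials, and squaring a polynomial supported on $\ge 3$ independent directions yields strictly more than $5$ monomials unless $g$ degenerates to depend on $\le 2$ of them.

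The main obstacle I expect is exactly this last point: ensuring the specialization genuinely lowers \emph{both} $\sigma$ and $\rho$ simultaneously, and then ruling out the borderline $\sigma=3$ configuration cleanly. The delicate part is that cancelling $a_\sigma X_\sigma^{l_\sigma}$ against $T_j$ might accidentally cause \emph{further} cancellations among the surviving terms (dropping $\rho$ by more than one, which is harmless) or, worse, fail to drop it at all if the resulting monomials recombine — so I would need to argue, using the multiplicative independence built into the $X_i^{l_i}$, that after specialization the $\sigma-1$ pure powers $X_1^{l_1},\dots,X_{\sigma-1}^{l_{\sigma-1}}$ remain present and distinct, guaranteeing $\sigma'=\sigma-1$. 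Once the reduction is valid the rest is bookkeeping against Propositions~\ref{rho0} and~\ref{rho1}, together with the verification of the explicit coefficients in items (1) and (2) by expanding $g^3$ and $g^2$ respectively.
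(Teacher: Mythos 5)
Your overall recursive skeleton matches the paper's (specialize one variable to force a cancellation, drop to $\rho'\le 1$, invoke Remark~\ref{rho0} and Proposition~\ref{rho1}, then pull back), with one cosmetic difference: the paper always cancels $T_1$ against $T_2$ (possible precisely because they are distinct monomials), whereas you cancel a pure power $a_iX_i^{l_i}$ against some $T_j$; both reductions land in essentially the same three residual equations. The genuine gap is in your base step $\sigma=2$. You pass immediately to a four-term identity $P(T)^s=a_1T^{\omega_1}+\dots+a_4T^{\omega_4}$, i.e.\ you assume $f$ is a pure power. Nothing in the hypotheses grants this: running the reduction honestly for $\sigma=2$, Proposition~\ref{rho1} applied to the specialized one-variable identity returns \emph{two} possible shapes for $f$, namely $f(T)=T^s$ and $f(T)=T^{s_1}+cT^{s_2}$, and the second must be excluded by a separate argument. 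The paper does this (its equation (E3)): since the specialization forces the support of $g(X_1,X_2)$ to lie on a single line $\{(r-i\gamma_1,i)\}$ in exponent space, the monomials of $g^{s_1}$ and $g^{s_2}$ live on parallel but disjoint lines, so no term of $g^{s_1}$ can cancel a term of $g^{s_2}$; counting then gives at least $3+2=5>4$ terms, a contradiction. This is not routine bookkeeping and your proposal never addresses it, so as written the classification for $\sigma=2$ is incomplete.

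A secondary weakness: your exclusion of $\sigma=3$ rests on the assertion that squaring a polynomial with three multiplicatively independent terms yields ``strictly more than $5$ monomials unless $g$ degenerates,'' which is precisely the statement requiring proof. The paper's version is only slightly more explicit (it reduces to $s=2$ with $g$ having $4$ or $5$ terms, three of them independent pure powers, and checks that the cross terms involving the extra monomials $R_1,R_2$ cannot all be cancelled down to five surviving exponents), so this part of your outline is acceptable as a sketch, but it should be flagged as the place where an actual computation is owed rather than a Newton-polytope slogan. The explicit pullback of the trinomial solution (identifying $R(X_1,X_2)=X_1^{l_1/4}X_2^{l_2/4}$ with coefficient $i\sqrt[4]{4a_1a_2}$, and checking that the two possible cancellation patterns in $g^2$ are equivalent under a change of variables) also deserves more than the one clause you give it, but that part is genuinely mechanical.
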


\begin{proof}
	First, we can assume, up to a rearrangement, that the indeterminate $X_\sigma$ appears in $T_1$ and $T_2$ with different exponents (not necessarily non-zero), that is, there exist integers $m_{\sigma 1} \neq m_{\sigma 2}$  such that $T_1=X_\sigma^{m_{\sigma 1}}\tilde{T}_1$ and $T_2=X_\sigma^{m_{\sigma 2}}\tilde{T}_2$, with $\tilde{T}_1,\tilde{T}_2$ monomials not containing $X_\sigma$. Hence, if we impose that $a_{\sigma+1}X_\sigma^{m_{\sigma 1}}\tilde{T}_1=-a_{\sigma+2}X_\sigma^{m_{\sigma 2}}\tilde{T}_2$ and solve for $X_\sigma$, we obtain a specialization $X_\sigma = \tilde{a} \tilde{T}$, where $\tilde{T}$ is a monomial in $X_1,\dots,X_{\sigma-1}$ with rational exponents. However, we can assume without loss of generality (by changing $X_1,\dots,X_{\sigma-1}$ with some suitable roots) that these exponents are integers, i.e. $\tilde{T}$ is a Laurent monomial in $X_1,\dots,X_{\sigma-1}$. 
	
	With this specialization, we can turn equation $(*)$ in another equation
	
	\begin{equation}
	f(\tilde{g}(\vect{X}))=a_1X_1^{l_1}+\dots+a_{\sigma-1}X_{\sigma-1}^{l_{\sigma-1}} + a_\sigma \tilde{a}^{l_\sigma} \tilde{T}^{l_\sigma} \tag{**}
	\end{equation}
	in at most $\sigma-1$ indeterminates, where $\tilde{g}(X_1,\dots,X_{\sigma-1})=g(X_1,\dots,X_{\sigma-1},\tilde{T})$, and such that $\rho' \le 1$ (notice that $\tilde{T}$ might coincide with one of the monomials $X_i^{l_i}$). We are now in the hypotheses of Remark \ref{rho0} and Proposition \ref{rho1}, hence we have the following cases:
	\begin{enumerate}
		\item If $\tilde{T}^{l_\sigma}=X_i^{l_i}$ for some $i$ and $a_{\sigma}\tilde{a}^{l_\sigma}=-a_i$, then these two terms cancel out. Thus we fall in the case considered in Remark \ref{rho0}, yielding $\sigma-2=1$, $f(T)=T^s$. Then, equation $(*)$ becomes
		\begin{equation}
		g(\vect{X})^s=a_1X_1^{l_1}+a_2X_2^{l_2}+a_3X_3^{l_3}+a_4T_1(X_1,X_2,X_3) + a_5T_2(X_1,X_2,X_3)\tag{E1}.
		\end{equation}
		\item If $\tilde{T}^{l_\sigma}=X_i^{l_i}$ for some $i$ and $a_{\sigma}\tilde{a}^{l_\sigma} \neq -a_i$, then these two terms can be merged in a single non-zero term. Then, again by Remark \ref{rho0} we deduce that $\sigma-1=1$ and $f(T)=T^s$, thus yielding
		\begin{equation}
		g(X_1,X_2)^s=a_1X_1^{l_1}+a_2X_2^{l_2}+a_3T_1(X_1,X_2) + a_4T_2(X_1,X_2)\tag{E2}.
		\end{equation}
		\item Lastly, if the exponents of $\tilde{T}^{l_\sigma}$ are different from those of the other terms of equation $(**)$, we can study equation $(**)$ by applying Proposition \ref{rho1}, obtaining two cases associated to the two solutions described therein.
		\begin{enumerate}
			\item $\sigma-1=1$, $f(T)=T^{s_1}+cT^{s_2}$, thus the equation becomes 
			\begin{equation}
			f(g(X_1,X_2))=a_1X_1^{l_1}+a_2X_2^{l_2}+a_3T_1(X_1,X_2) + a_4T_2(X_1,X_2)\tag{E3}.
			\end{equation}
			\item $\sigma-1=2$, $f(T)=T^2$, yielding again equation (E1), with $s=2$.
		\end{enumerate}
	\end{enumerate}
	
	Then, to conclude our study, we have to examine those three equations:
	\begin{enumerate}
		\item First, consider the equation
		\begin{equation}
		g(\vect{X})^s=a_1X_1^{l_1}+a_2X_2^{l_2}+a_3X_3^{l_3}+a_4T_1(X_1,X_2,X_3) + a_5T_2(X_1,X_2,X_3)\tag{E1}.
		\end{equation}
		
		With a suitable parametrization of the form $X_i \rightarrow T^{m_i}$ - such that all terms of equation (E1) have distinct images - we can turn our equation to the following polynomial identity in the single indeterminate $T$ 
		$$P(T)^s=a_1T^{\omega_1}+a_2T^{\omega_2}+a_3T^{\omega_3}+a_4T^{\omega_4}+a_5T^{\omega_5}.$$
		Clearly, we can assume without loss of generality that $\omega_1=0, a_1=1$, thus falling under the hypotheses of Proposition \ref{4}. Hence the solutions of this last equation are described in the following Tables:
		
		\begin{center}
			\scalebox{0.8}{
				\begin{tabular}{ | c | c | c | c | c | c | c | c|}
					
					\hline
					$s$ & $\omega_3$ & $\omega_4$ & $\omega_5$ & $a_3$ & $a_4$ & $a_5$ & $P(T)$ \\ \hline
					\rule[-2.5ex]{0pt}{6ex} $4$ & $2\omega_2$ & $3\omega_2$ & $4\omega_2$ & $\frac{3}{8}a_2^2$ & $\frac{1}{16}a_2^3$ & $\frac{1}{256}a_2^3$& $1+\frac{1}{4}a_2T^{\omega_2}$\\ \hline
					\rule[-2.5ex]{0pt}{6ex} $3$ & $3\omega_2$ & $5\omega_2$ & $6\omega_2$ & $-\frac{5}{27}a_2^3$ & $\frac{1}{81}a_2^5$ & $-\frac{1}{729}a_2^6$& $1+\frac{1}{3}a_2T^{\omega_2}-\frac{1}{9}a_2^2T^{2\omega_2}$\\ \hline
					\rule[-2.5ex]{0pt}{6ex} $2$ & $4\omega_2$ & $5\omega_2$ & $6\omega_2$ & $\frac{5}{64}a_2^4$ & $-\frac{1}{64}a_2^5$ & $\frac{1}{256}a_2^6$& $1+\frac{1}{2}a_2T^{\omega_2}-\frac{1}{8}a_2^2T^{2\omega_2}+\frac{1}{16}a_2^3T^{3\omega_2}$\\ \hline
					\rule[-2.5ex]{0pt}{6ex} $2$ & $3\omega_2$ & $5\omega_2$ & $6\omega_2$ & $-\frac{5}{32}a_2^4$ & $\frac{1}{256}a_2^5$ & $\frac{19}{1024}a_2^6$& $1+\frac{1}{2}a_2T^{\omega_2}-\frac{1}{8}a_2^2T^{2\omega_2}-\frac{1}{64}a_2^3T^{3\omega_2}$\\ \hline
					\rule[-2.5ex]{0pt}{6ex} $2$ & $4\omega_2$ & $7\omega_2$ & $8\omega_2$ & $\frac{7}{64}a_2^4$ & $-\frac{1}{512}a_2^7$ & $\frac{1}{4096}a_2^8$& $1+\frac{1}{2}a_2T^{\omega_2}-\frac{1}{8}a_2^2T^{2\omega_2}+\frac{1}{16}a_2^3T^{3\omega_2}+\frac{1}{64}a_2^4T^{4\omega_2}$\\ \hline
					\rule[-2.5ex]{0pt}{6ex} $2$ & $2\omega_2$ & $5\omega_2$ & $6\omega_2$ & $\frac{5}{4}a_2^2$ & $-\frac{1}{4}a_2^5$ & $\frac{1}{16}a_2^6$& $1+\frac{1}{2}a_2T^{\omega_2}+\frac{1}{2}a_2^2T^{2\omega_2}-\frac{1}{4}a_2^3T^{3\omega_2}$\\ \hline
			\end{tabular}}
			\captionof{table}{}\label{Tab4.1}
		\end{center}
		
		\begin{center}
			\scalebox{0.8}{
				\begin{tabular}{ | c | c | c | c | c | c | c | c|}
					\hline
					$d$ & $\omega_3$ & $\omega_4$ & $\omega_5$ & $a_3$ & $a_4$ & $a_5$ & $P(T)$ \\ \hline
					\rule[-2.5ex]{0pt}{6ex} $2$ & $2\omega_2$ & $3\omega_2$ & $4\omega_2$ &  & $-\frac{1}{8}a_2^3+\frac{1}{2}a_2a_3$ & $\left(-\frac{1}{8}a_2^2+\frac{1}{2}a_3\right)^2$& $1+\frac{1}{2}a_2T^{\omega_2}+\left(\frac{1}{2}a_3-\frac{1}{8}a_2^2\right)T^{2\omega_2}$\\ \hline
			\end{tabular}}
			\captionof{table}{}\label{Tab4.2}
		\end{center}
		
		Now, notice that since $g(X_1,X_2,X_3)^s$ contains the $3$ multiplicatively independent terms $X_i^{l_i}$, then $g(X_1,X_2,X_3)$ also must contain at least three multiplicatively independent terms. Moreover, if $g(X_1,X_2,X_3)$ has exactly three terms, these must be all multiplicatively independent, thus there are no cancellations in the expansion of $g(X_1,X_2,X_3)^s$, which then will have at least $\binom{4}{2}=6$ terms. Hence, looking at the Tables we can deduce that $s=2$, and $g(X_1,X_2,X_3)$ has $4$ or $5$ terms. Since in both cases there must be exactly three multiplicatively independent terms, we can map those three terms each in a power of one indeterminate, thus obtaining an equation of the form $$g(X_1,X_2,X_3)=c_1X_1^{\alpha_1}+c_2X_2^{\alpha_2}+c_3X_3^{\alpha_3}+c_4R_1(X_1,X_2,X_3)+c_5R_2(X_1,X_2,X_3)$$
		(where $g(X_1,X_2,X_3)$ has four terms if and only if $R_1=R_2$). Then, expanding $g(X_1,X_2,X_3)^2$ it is easy to check that among the terms dependent on $R_1,R_2$ we cannot impose equalities such that there are only five terms left in the end.
		\item Consider now the equation
		\begin{equation}
		g(X_1,X_2)^s=a_1X_1^{l_1}+a_2X_2^{l_2} + a_3 T_1(X_1,X_2) + a_4 T_2(X_1,X_2) \tag{E2}.
		\end{equation}
		Similarly, using a suitable parametrization, depending on $T_1$ and $T_2$, of the form $X_1 \rightarrow T^{m_1}$, $X_2 \rightarrow T^{m_2}$ such that no terms of (E2) have the same image, we reduce our equation to
		$$P(T)^s=a_1T^{\omega_1}+a_2T^{\omega_2}+a_3T^{\omega_3}+a_4T^{\omega_4}.$$
		Again, we can assume without loss of generality that $\omega_1=0, a_1=1$. Thus, as a consequence of Proposition \ref{4}, we have the following solutions:
		
		\begin{center}
			\begin{tabular}{ | c | c | c | c |}
				\hline
				$s$ & $\omega_3$ & $\omega_4$ & $P(T)$ \\ \hline
				\rule[-2.5ex]{0pt}{6ex}
				$2$ & $3\omega_2$ & $4\omega_2$ & $1+\frac{1}{2}a_2T^{\omega_2}-\frac{1}{8}a_2^2T^{2\omega_2}$\\ \hline
				\rule[-2.5ex]{0pt}{6ex}
				$3$ & $2\omega_2$ & $3\omega_2$ &  $1+\frac{1}{3}a_2T^{\omega_2}$\\ \hline
			\end{tabular}
		\end{center}
		Essentially, these two solutions can be described as the cube of a binomial, and the square of a special trinomial. 
		Next, we examine in detail these two cases:
		\begin{itemize}
			\item If $s=3$, and $g(X_1,X_2)$ is a binomial, since the two terms of $g(X_1,X_2)$ must be multiplicatively independent (because $g(X_1,X_2)^3$ has two multiplicatively independent terms), we can assume without loss of generality (up to a suitable change of variables) that $g(X_1,X_2)=c_1X_1^{\alpha_1}+c_2X_2^{\alpha_2}$, which immediately yields the first solution.
			\item If $s=2$, and $g(X_1,X_2)$ is a trinomial, as before, there must be at least two multiplicatively independent terms in $g(X_1,X_2)$. On the other hand, clearly the three terms cannot be multiplicatively independent (since in this case the square would have six terms). Moreover, we deduce from the table that in $P(T)^2$ the square of the middle term cancels out with the mixed product of the other two. Thus, reflecting this in our original equation, we can assume, up to a suitable change of variables, that $g(X_1,X_2)$ has the form $$g(X_1,X_2)=c_1X_1^{\alpha_1}+c_2X_2^{\alpha_2}+c_3R(X_1,X_2),$$ with $R(X_1,X_2)$ distinct from the other two terms, and such that one of the following conditions hold (up to a rearrangement):
			\begin{enumerate}
				\item $2c_2c_3X_2^{\alpha_2}R(X_1,X_2)=-c_1^2X_1^{2\alpha_1}$, that is, $R(X_1,X_2)=X_1^{2\alpha_1}X_2^{-\alpha_2}$, $c_3=-\frac{c_1^2}{2c_2}$.
				\item $c_3^2R^2(X_1,X_2)=-2c_1c_2X_1^{\alpha_1}X_2^{\alpha_2}$, which implies $R(X_1,X_2)=X_1^{\frac{\alpha_1}{2}}X_2^{\frac{\alpha_2}{2}}$, $c_3=\sqrt{-2c_1c_2}$.
			\end{enumerate}
			However, it is easy to see that these two solutions are equivalent up to a change of variables, obtained by mapping $R(X_1,X_2)$ to one indeterminate (or to an appropriate root). Therefore, expanding the square yields the second solution.
		\end{itemize}
		\item Finally, consider now the equation \begin{equation}
		f(g(X_1,X_2))=a_1X_1^{l_1}+a_2X_2^{l_2}+a_3T_1(X_1,X_2) + a_4T_2(X_1,X_2)\tag{E3},
		\end{equation} 
		with $f(T)=T^{s_1}+\alpha T^{s_2}$, $s_1 > s_2 \ge 1$. In this context, we have $\tilde{g}(X_1)=cX_1^r$, where $\tilde{g}(X_1)=g(X_1,\tilde{T})$, and $X_2=\tilde{T}$ is obtained from $(*)$. In particular, $\tilde{T}$ is a monomial in the single indeterminate $X_1$, that is, $\tilde{T}=t_1X_1^{\gamma_1}$. But since $g(X_1,X_1^{\gamma_1})=cX_1^r$, the terms (disregarding the coefficients) of $g(X_1,X_2)$ must necessarily be contained in the sum $\displaystyle \sum_{i \in \mathbb{Z}} X_1^{r-i\gamma_1} X_2^i.$ At this point, write $$\displaystyle g(X_1,X_2)= \sum_{i \in I} X_1^{r-i\gamma_1} X_2^i,$$
		and consider $f(g(X_1,X_2))=g(X_1,X_2)^{s_1}+\alpha g(X_1,X_2)^{s_2}$. We want to prove that the two polynomials $g(X_1,X_2)^{s_1}$ and $g(X_1,X_2)^{s_2}$ have no common term: in fact, if such a monomial $X_1^{N_1}X_2^{N_2}$ exists, there must be $i_1,\dots,i_{s_1}$ and $j_1,\dots,j_{s_2}$ such that $$\begin{cases} (r-\gamma_1i_1)+\dots+(r-\gamma_1i_{s_1})=N_1, \\ i_1+\dots+i_{s_1} = N_2, \\  (r-\gamma_1j_1)+\dots+(r-\gamma_1j_{s_2})=N_1,  \\ j_1+\dots+j_{s_2}=N_2,  \end{cases} \text{ implying } \begin{cases} i_1+\dots+i_{s_1} = N_2, \\ s_1r-\gamma_1N_2=N_1, \\ j_1+\dots+j_{s_2}=i_1+\dots+i_{s_1},\\
		s_2r-\gamma_1N_2=N_1.  \end{cases}.$$
		The second and the fourth equation of this linear system are incompatible if $s_1 \neq s_2$; hence, the two polynomials $g(X_1,X_2)^{s_1}$ and $g(X_1,X_2)^{s_2}$ have no common term. The number of terms of $f(g(X_1,X_2))$ is then equal to the sum of the number of terms of $g(X_1,X_2)^{s_1}$ and $g(X_1,X_2)^{s_2}$. Since $s_1 > s_2 \ge 1$, and $g(X_1,X_2)$ has at least two terms, $g(X_1,X_2)^{s_1}$ has at least three terms, while $g(X_1,X_2)^{s_2}$ has at least two: therefore $f(g(X_1,X_2))$ has at least $5 > 4 =\sigma+\rho$ terms, which is a  contradiction.\qedhere
	\end{enumerate}

\end{proof}

A direct application of this result yields an extension of Corollary \ref{indmul}.

\begin{corollary}\label{12dep}
	Let $\alpha(n)=\displaystyle \sum_{i=1}^k c_i\alpha_i^n \in \mathcal{E}_{\mathbb{Z}^+}$. 
	\begin{enumerate}
		\item If there are exactly $k-1 \ge 2$ (and not more) multiplicatively independent integers among the elements of the set $	\{\alpha_1,\dots,\alpha_k\}$, then the set $\alpha(\mathbb{N})$ is a Universal Hilbert Set, unless $\alpha(n)$ is of the form $\alpha(n)=(b_1\beta_1^m+b_2\beta_2^m)^2$.
		\item If there are exactly $k-2 \ge 2$ (and not more) multiplicatively independent integers among the elements of the set $	\{\alpha_1,\dots,\alpha_k\}$, then the set $\alpha(\mathbb{N})$ is a Universal Hilbert Set, unless $\alpha(n)$ is of the form $\alpha(n)=(b_1\beta_1^m+b_2\beta_2^m)^3$.
	\end{enumerate}
	
	In particular, if $\alpha(n)=\alpha_1^n+\dots+\alpha_k^n$, and among the $\alpha_i$ there are at least $k-2 \ge 2$ multiplicatively independent elements, then $\alpha(n)$ is a Universal Hilbert Set. 
\end{corollary}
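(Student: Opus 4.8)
The plan is to argue by contradiction, combining the Corvaja--Zannier criterion (Theorem~\ref{charHS}) with the reduction carried out in Section~2 and the classification of compositions with at most two ``extra'' monomials (Propositions~\ref{rho1} and~\ref{rho2}).

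So suppose $\alpha(\mathbb{N})$ is \emph{not} a Universal Hilbert Set. By Theorem~\ref{charHS} there are an integer $d\ge2$, a monic $P(T)\in\mathbb{Q}[T]$ of degree $d$, and $\beta\in\mathcal{E}_{\mathbb{Z}}$ with $\alpha(dn)=P(\beta(n))$. I would then run the substitution of Section~2 on this identity: sending each $\beta_j^{\,n}$ to a suitable Laurent monomial turns $\beta$ into a Laurent polynomial $g$, turns each $\alpha_i^{dn}$ into a monomial $\tilde R_i$, and converts the functional identity into one of the form $(*)$ with $f=P$, inner polynomial $g$, and $\sigma$ equal to the multiplicative rank of $\{\tilde R_1,\dots,\tilde R_k\}$. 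Since passing from $\alpha_i$ to $\alpha_i^{d}$ and then to $\tilde R_i$ is an isomorphism of the corresponding finitely generated multiplicative groups up to torsion, this rank coincides with the multiplicative rank of $\{\alpha_1,\dots,\alpha_k\}$; hence $\sigma=k-1$ in case (1) and $\sigma=k-2$ in case (2), so that $\rho=k-\sigma$ equals $1$, respectively $2$.

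In case (1) I would invoke Proposition~\ref{rho1}. Its first alternative has $\sigma=1$, which is excluded here since $\sigma=k-1\ge2$; and if $k\ge4$ there is no admissible solution at all, so this forces $\alpha(\mathbb{N})$ to be a Universal Hilbert Set. If $k=3$ we must be in the second alternative: $f=P=T^{2}$ (so $d=2$) and $g$, hence $\beta$, is a binomial. Pulling this back, $\alpha(2n)=\beta(n)^{2}$ with $\beta$ a two-term function; matching the three bases of the two sides — the multiplicatively dependent one among $\alpha_1,\alpha_2,\alpha_3$ has to correspond to the mixed term of $\beta^{2}$ — gives, after relabelling, $\alpha_2^{2}=\alpha_1\alpha_3$ and $c_2^{2}=4c_1c_3$, whence a short computation presents $\alpha(n)$ in the exceptional shape $(b_1\beta_1^{m}+b_2\beta_2^{m})^{2}$. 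Case (2) runs the same way with Proposition~\ref{rho2}: $\sigma=k-2\ge2$ together with the proposition forces $\sigma=2$, i.e. $k=4$ (and, if $k\ge5$, again $\alpha(\mathbb{N})$ is a Universal Hilbert Set); of the two alternatives, the one with $f=T^{2}$ — the ``square of a special trinomial'' — has to be discarded, because the rigid relation it imposes on the coefficients and exponents of $g$ cannot be realized with $\beta\in\mathcal{E}_{\mathbb{Z}}$ while keeping all the $\alpha_i$ integral and $\{\alpha_1,\dots,\alpha_4\}$ of multiplicative rank exactly $2$. We are therefore in the first alternative, $f=P=T^{3}$ (so $d=3$) with $g$ and $\beta$ binomials, and the same pull-back yields $\alpha(n)=(b_1\beta_1^{m}+b_2\beta_2^{m})^{3}$. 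Finally, for the ``in particular'' statement, when $\alpha(n)=\alpha_1^{n}+\dots+\alpha_k^{n}$ the multiplicative rank of $\{\alpha_1,\dots,\alpha_k\}$ is $k$, $k-1$ or $k-2$; the first case is Corollary~\ref{indmul}, and in the remaining two the exceptional shapes are ruled out because expanding $(b_1\beta_1^{m}+b_2\beta_2^{m})^{2}$, respectively $(b_1\beta_1^{m}+b_2\beta_2^{m})^{3}$, and asking that every coefficient equal $1$ forces $b_1^{2}=b_2^{2}=1$ and $2b_1b_2=1$, respectively $b_1^{3}=b_2^{3}=1$ and $3b_1^{2}b_2=1$, neither of which has a real solution.

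The step I expect to be the main obstacle is precisely the exclusion, in case (2), of the $f=T^{2}$ branch of Proposition~\ref{rho2}: one must show that the very constrained trinomial produced there can never arise as the $g$ attached to a genuine pair $(\alpha,\beta)$ with $\alpha\in\mathcal{E}_{\mathbb{Z}^{+}}$ and exactly $k-2$ multiplicatively independent bases — concretely, that the multiplicative relations it imposes among the $\alpha_i$ either drop the rank below $k-2$ or force one of the coefficients outside $\mathbb{Q}$. The rest is bookkeeping — tracking which monomial $\tilde R_i$ corresponds to which $\alpha_i^{d}$, and the auxiliary roots introduced when defining the $\tilde T_i$ and $\tilde R_i$ — which is routine but should be done with care so that the final identities for $\alpha(n)$ come out with the stated coefficients.
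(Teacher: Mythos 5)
Your proposal follows the paper's proof essentially verbatim: argue by contradiction via Theorem~\ref{charHS}, run the Section~2 reduction to an identity of type $(*)$ with $\rho=1$ (resp.\ $\rho=2$), and then apply Proposition~\ref{rho1} (resp.\ Proposition~\ref{rho2}) and pull back. The one step you leave open and flag as the main obstacle --- discarding the $f(T)=T^2$ branch of Proposition~\ref{rho2} in case (2) --- is closed in the paper by a single observation: that solution is the trinomial $g(X_1,X_2)=\sqrt{a_1}X_1^{l_1/2}+\sqrt{a_2}X_2^{l_2/2}+i\sqrt[4]{4a_1a_2}\,X_1^{l_1/4}X_2^{l_2/4}$, whose middle coefficient is forced (by the cancellation $c_3^2=-2c_1c_2$) to be non-real, whereas here $F$ and $G$ inherit real (rational) data from $P\in\mathbb{Q}[T]$ and $\beta\in\mathcal{E}_{\mathbb{Z}}$; so the second disjunct of the dichotomy you guessed (``forces one of the coefficients outside $\mathbb{Q}$'') is exactly what happens, and for the simplest possible reason. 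With that one line inserted your argument is complete and coincides with the paper's; your verification of the ``in particular'' clause is more explicit than what the paper writes, but is routine and correct.
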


\begin{proof}
	Let $\alpha(n)=\sum_{i=1}^k c_i\alpha_i^n$ be such that $\alpha(\mathbb{N})$ is not a Universal Hilbert Set. Then by Theorem \ref{charHS} there exist $\beta \in \mathcal{E}_\mathbb{Z}$ and a polynomial $P(T) \in \mathbb{Q}[T]$ of degree $d \ge 2$ such that $\alpha(dn)=P(\beta(n))$ identically.
	\begin{enumerate}
		\item We can map this relation to an equation of the form $$F(G(X_1,\dots,X_\sigma))=a_1X_1^{l_1}+\dots+a_\sigma X_\sigma^{l_\sigma}+a_{\sigma+1}T_1(X_1,\dots,X_\sigma).$$
		Hence, since $k \ge 3$, by Proposition \ref{rho1} we obtain $F(T)=T^2$ and that $G(X_1,X_2)$ is a binomial.
		\item In this case, the relation $\alpha(dn)=P(\beta(n))$ is mapped, via a suitable change of variables, to the equation
		$$F(G(X_1,\dots,X_\sigma))=a_1X_1^{l_1}+\dots+a_\sigma X_\sigma^{l_\sigma}+a_{\sigma+1}T_1(X_1,\dots,X_\sigma)+a_{\sigma+2}T_2(X_1,\dots,X_\sigma).$$
		Therefore, since $k \ge 3$, we are under the assumptions of Proposition \ref{rho2}. However, since both polynomials $F(T)$ and $G(X_1,\ldots,X_\sigma)$ cannot contain terms with non-real cofficients, the only admissible solution is the one where $F(T)=T^3$ and $G(X_1,X_2)$ is a binomial. \qedhere
	\end{enumerate}
\end{proof}

\begin{example}
	\begin{enumerate}
		\item From the previous proposition we immediately deduce that any set of the form $\{\alpha_1^n+\alpha_2^n+\alpha_3^n\}$, or $\{\alpha_1^n+\alpha_2^n+\alpha_3^n+\alpha_4^n\}$, where the integers $\alpha_i$ are not powers of the same integer, is a Universal Hilbert Set.
		\item Let $\alpha(n)=8^n+27^n+3\cdot 12^n+3 \cdot 18^n$. Since $\alpha(n)=(2^n+3^n)^3$, $\alpha(\mathbb{N})$ is not a Universal Hilbert Set.\qedhere
	\end{enumerate}
\end{example}

\section{The general case}
One of the drawbacks of the recursive strategy described i is that such a method produces mechanical proofs with an increasing number of cases that would not give much insight on the general case, thus making this kind of result pointless and tedious. However, it makes sense to ask how many multiplicatively independent elements there can be in a polynomial composition $f(g(X_1,\dots,X_\sigma))$. Namely, we will ask the following: \emph{if we fix the number of variables $\sigma$, can we determine the minimum number $k$ of terms of a composition?}

Let $\sigma \ge 1$, $\rho \ge 0$ be integers, $f \in \mathbb{C}[T]$ a polynomial of degree larger than $1$, $g \in \mathbb{C}[X_1^{\pm 1},\dots,X_\sigma^{\pm 1}]$ a Laurent polynomial in $X_1,\dots,X_\sigma$, and consider the equation of Question \ref{mainQ} $$f(g(X_1,\dots,X_\sigma))=a_1X_1^{l_1}+\dots+a_\sigma X_{\sigma}^{l_\sigma}+a_{\sigma+1}T_1(X_1,\dots,X_\sigma)+\dots+a_{\sigma+\rho}T_\rho(X_1,\dots,X_\sigma),$$
where $a_1,\dots,a_{\sigma+\rho} \in \mathbb{C}$, $l_1,\dots,l_\sigma \in \mathbb{Z}$, and $T_1,\dots,T_\rho$ are Laurent monomials in $X_1,\dots,X_\sigma$. Fix $\sigma + \rho = k$, and write the two polynomials $f$ and $g$ as  $$f(T)=\sum_{j \in J} f_j T^j \ \ \ \text{  and  } \ \ \ g(X_1,\dots,X_\sigma)=\sum_{(i_1,\dots,i_\sigma) \in I} g_{i_1,\dots,i_\sigma}X_1^{i_1}\dots X_{\sigma}^{i_\sigma},$$
where both polynomials have coefficients in $\mathbb{C}$, and the sets $J \subset \mathbb{Z}$ and $I \subset \mathbb{Z}^{\sigma}$ are finite, with $h=|I|$ being the number of terms of the inner polynomial $g(X_1,\dots,X_\sigma)$. We want to provide a lower bound for the minimum number of terms $k=\sigma + \rho$ of these polynomial compositions, depending on $\sigma$ and $h$; we will denote this minimum by $\tilde{k}(\sigma,h)$. Moreover, denote by $\displaystyle \tilde{k}(\sigma)=\min_{k \ge \sigma} \tilde{k}(\sigma,h)$.

Clearly, considering $g(X_1,\dots,X_\sigma)=X_1+\dots+X_\sigma$ and $f(T)=T^2$ we see that $f(g(X_1,\dots,X_\sigma))$ has exactly $\binom{\sigma+1}{2}$ terms, thus $\tilde{k}(\sigma) \le \binom{\sigma+1}{2}$.

On the other hand, we can easily prove that, if $k \le 2\sigma - 2$, there are no polynomials $f$ and $g$ such that
\begin{equation}\label{mainEQ}
f(g(\vect{X}))=a_1X_1^{l_1}+\ldots+a_\sigma X_\sigma^{l_\sigma}+a_{\sigma+1}T_1(\vect{X})+\ldots+a_{\sigma+\rho}T_\rho(\vect{X})  \tag{1}
\end{equation}

For this purpose, consider the two monomials $T_1$ and $T_2$. Since they are distinct, we can deduce from the assumption $a_{\sigma+1}T_1(X_1,\dots,X_\sigma)=-a_{\sigma+2}T_2(X_1,\dots,X_\sigma)$ a specialization $X_i = \tilde{T}$ for one of our indeterminates. Moreover, if we apply this specialization, our equation will be reduced to another one having $\sigma' \le \sigma-1$ variables and $k' \le k-2$ terms; further, in order to cancel out one variable $X_j$, the term $a_jX_j^{l_j}$ must cancel out with another term, which has to be associated (after our specialization) to a certain monomial - that is, the only way to cancel out a variable is to diminish the number of terms (at least) by two. Therefore, it is easy to check that this second equation would still satisfy $k' \le 2\sigma' -2$; hence we can prove by a descent argument (notice that we have no solution for $\sigma = 1,2$) that $\tilde{k}(\sigma) \ge 2 \sigma-1$.

It is worth noticing that this argument can be applied to Theorem \ref{charHS}, yielding the following result.

\begin{proposition}\label{trivbnd}
	Let $\displaystyle \alpha(n)=\sum_{i=1}^k c_i\alpha_i^n \in \mathcal{E}_{\mathbb{Z}^+}$ be such that there are at least $\frac{k}{2}+1$ multiplicatively independent elements between $\alpha_1,\dots,\alpha_n \in \mathbb{Z}^+$. 
	
	Then the set $\alpha(\mathbb{N})$ is a Universal Hilbert Set.
\end{proposition}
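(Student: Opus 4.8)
The plan is to argue by contradiction, combining the Corvaja--Zannier characterization (Theorem~\ref{charHS}) with the inequality $\tilde k(\sigma)\ge 2\sigma-1$ established in the discussion preceding the statement. Suppose that $\alpha(\mathbb N)$ is \emph{not} a Universal Hilbert Set. By Theorem~\ref{charHS} there exist an integer $d\ge 2$, a polynomial $P(T)\in\mathbb Q[T]$ of degree $d$, and an element $\beta\in\mathcal E_{\mathbb Z}$ such that $\alpha(dn)=P(\beta(n))$ holds identically. Applying the reduction explained in Section~2 and at the beginning of this section --- selecting a maximal set of multiplicatively independent characteristic roots of $\beta$, sending them to powers of new indeterminates, and then passing to multiplicatively independent monomials --- this identity becomes an instance of equation $(*)$ of Question~\ref{mainQ} with $f=P$ of degree $d\ge 2$, say in $\sigma$ indeterminates and with a total of $k=\sigma+\rho$ terms on the right.

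The crucial bookkeeping point is that the parameter $\sigma$ here equals the maximal number of multiplicatively independent elements among the $\alpha_i$. Indeed, since the $\alpha_i$ are pairwise distinct positive integers, the $\alpha_i^d$ are pairwise distinct too, so no cancellation occurs in $P(\beta)=\sum_i c_i(\alpha_i^d)^n$ and all $k$ terms survive; moreover the $k$ monomials on the right of $(*)$ correspond, up to fixed roots introduced by the changes of variables, to $\alpha_1^d,\dots,\alpha_k^d$, and since $\mathbb Z^+$ is free abelian, raising to a fixed power does not change the rank of the lattice of multiplicative relations. Hence the maximal number of multiplicatively independent monomials among them, which is exactly $\sigma$, coincides with the maximal number of multiplicatively independent $\alpha_i$. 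By hypothesis the latter is at least $\tfrac k2+1$, so $\sigma\ge\tfrac k2+1$, i.e. $k\le 2\sigma-2$ (this holds regardless of the parity of $k$, since $\sigma$ is an integer). But the descent argument carried out just before the statement (with Remark~\ref{rho0} and Proposition~\ref{rho1} as base cases) gives $\tilde k(\sigma)\ge 2\sigma-1$, so no identity $(*)$ with $\deg f\ge 2$ can have only $k\le 2\sigma-2$ terms --- a contradiction. Therefore $\alpha(\mathbb N)$ is a Universal Hilbert Set.

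I expect the only genuine obstacle to be this last bookkeeping: tracking, through the several changes of variables of Section~2, that the number of indeterminates in $(*)$ is governed by the multiplicative rank of the $\alpha_i$ themselves, and not of the auxiliary characteristic roots $\beta_j$, and that the term count $k$ is not lowered by an accidental cancellation. Once that is secured, the statement follows at once from Theorem~\ref{charHS} and the bound $\tilde k(\sigma)\ge 2\sigma-1$.
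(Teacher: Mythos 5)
Your proof is correct and is precisely the argument the paper intends: the paper states Proposition \ref{trivbnd} with no written proof beyond the remark that the descent bound $\tilde k(\sigma)\ge 2\sigma-1$ ``can be applied to Theorem \ref{charHS}'', and your write-up fills in exactly that reduction. The bookkeeping you single out --- that the $\sigma$ of equation $(*)$ equals the multiplicative rank of the $\alpha_i$ (unchanged by raising to the $d$-th power, and with no cancellation among the distinct $\alpha_i^d$), so the hypothesis forces $k\le 2\sigma-2$ --- is handled correctly and matches the paper's setup.
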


\subsection{Small values of $\sigma$}
We have proved that $$2\sigma-1 \le \tilde{k}(\sigma) \le \binom{\sigma+1}{2}.$$
In order to guess whether those two bounds are sharp and gather more insight on $\tilde{k}(\sigma)$, we study the behavior of $\tilde{k}(\sigma)$ for small values of $\sigma$.
\begin{remark}\label{k123}
	\begin{enumerate}
		\item Clearly, $\tilde{k}(1)=1=2 \cdot 1 -1=\binom{1+1}{2}$ (see Remark \ref{rho0}).
		\item Remark \ref{rho0} and Proposition \ref{rho1} yield $\tilde{k}(2)=3=\binom{2+1}{2}$.
		\item Since $6=\binom{\sigma+1}{2}$, $\tilde{k}(3) \le 6$. On the other hand, for a composition having $k < 6$ terms, we are under the hypotheses of either Remark \ref{rho0} or Propositions \ref{rho1} and \ref{rho2}. Since these results show that there are no solutions for $\sigma=3$ under these hypotheses, $\tilde{k}(3) = 6 = \binom{3+1}{2}$.
	\end{enumerate}
\end{remark}
The case $\sigma=4$ is significantly harder to deal with our tools, since Proposition \ref{rho2} would not cover the cases $7 \le k \le 9$, and we already noted that, as $k$ grows, our recursive strategy becomes way more impractical. However, we can still show that $\tilde{k}(4)=\binom{4+1}{2}$ if $g(X_1,\dots,X_4)$ is a polynomial in $\mathbb{C}[X_1,X_2,X_3,X_4]$.
\begin{proposition}\label{k4}
	Let $\rho \ge 0$ be an integer, $g(X_1,X_2,X_3,X_4) \in \mathbb{C}[X_1,X_2,X_3,X_4]$ a polynomial in the indeterminates $\vect{X} = (X_1,X_2,X_3,X_4)$, and let $f(T) \in \mathbb{C}[T]$ be such that
	\begin{equation}
	f(g(\vect{X}))=a_1X_1^{l_1}+\dots+a_4 X_4^{l_4}+ \sum_{i=1}^{\rho} a_{4+i}T_i(\vect{X}) \tag{*},
	\end{equation}
	with $l_1,l_2,l_3,l_4$ positive integers, $a_1,\dots,a_{4+\rho} \in \mathbb{C}$ and $T_1, \dots,T_\rho \in \mathbb{C}[X_1,X_2,X_3,X_4]$ monomials in $X_1,\dots,X_4$. 
	Let $k=4+\rho$ be the number of terms of $f(g(X_1,X_2,X_3,X_4))$. 
	
	Then $k \ge 10=\binom{4+1}{2}$.
\end{proposition}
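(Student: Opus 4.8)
The plan is to reduce the problem, via the recursive specialization technique already used in the proofs of Propositions \ref{rho1} and \ref{rho2}, to the case where $f(T)=T^s$ is a pure power (together with possibly the degenerate binomial case $f(T)=T^{s_1}+\alpha T^{s_2}$), and then to argue that a pure power of a genuinely $4$-variable polynomial must have at least $\binom{4+1}{2}=10$ terms. First I would dispose of the case $\deg f=1$ (impossible since then $f(g)$ is just $g$ specialized, hence has the same number of terms as $g$, but a polynomial in four variables cannot have four multiplicatively independent monomials of the form $X_i^{l_i}$ plus nothing else unless it is exactly $a_1X_1^{l_1}+\dots+a_4X_4^{l_4}$, which is not a composition with $\deg f>1$). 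So assume $\deg f\ge 2$. Suppose for contradiction that $k\le 9$, i.e. $\rho\le 5$.

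Next I would run the descent. Since $k\le 9<2\cdot 4$, in particular $k\le 2\sigma-1$ with $\sigma=4$; but actually I need something sharper than the bound $\tilde{k}(\sigma)\ge 2\sigma-1$ from the general case, because $2\cdot 4-1=7\le 9$. The idea is: if $\rho\ge 2$, pick two distinct monomials $T_1,T_2$ among the free terms, impose $a_{5}T_1=-a_{6}T_2$, solve for one indeterminate $X_i=\tilde T$, and substitute. This kills variable $X_i$ and kills at least two terms (as in Proposition \ref{rho2}), producing an identity $f(\tilde g(\vect X))=\sum a_j X_j^{l_j}+(\text{at most }\rho-2\text{ free monomials})$ in at most three variables. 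Iterating, after at most two such steps we land on an equation in $\le 2$ variables with $\le 5$ terms, which is governed by Remark \ref{rho0} and Propositions \ref{rho1}, \ref{rho2}. Those results force $f(T)=T^s$ for some $s\ge 2$ (the solutions with $f(T)=T^{s_1}+cT^{s_2}$ only occur in the $\sigma=1$, $\rho=1$ branch and were shown in the proof of Proposition \ref{rho2}, case (E3), to be impossible when the inner polynomial has $\ge 2$ genuinely bivariate terms; I would handle that branch separately, showing it forces the original $g$ to have too many terms). Carrying the value $s$ back up: $f(T)=T^s$, so the original identity reads
\begin{equation*}
g(X_1,X_2,X_3,X_4)^s=a_1X_1^{l_1}+a_2X_2^{l_2}+a_3X_3^{l_3}+a_4X_4^{l_4}+\sum_{i=1}^{\rho}a_{4+i}T_i,
\end{equation*}
with $4+\rho\le 9$.

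Now the core of the argument: $g^s$ contains the four multiplicatively independent terms $X_1^{l_1},\dots,X_4^{l_4}$, so $g$ itself must contain at least four multiplicatively independent monomials; call them (after a change of variables) $c_1X_1^{\alpha_1},\dots,c_4X_4^{\alpha_4}$, and write $g=\sum_{i=1}^4 c_iX_i^{\alpha_i}+R$, where $R$ collects the remaining monomials of $g$. If $R=0$, then $g^s$ with $s\ge 2$ already has at least $\binom{s+3}{3}\ge\binom{5}{3}=10$ terms (the multinomial expansion of a sum of four algebraically independent powers has no cancellation), contradiction. If $R\ne 0$, then $g$ has $h\ge 5$ terms; here I would use the lower bound coming from Proposition \ref{1}/\ref{4} philosophy together with a direct count: in $g^s$, the "pure" products among $X_1^{\alpha_1},\dots,X_4^{\alpha_4}$ of total degree $s$ contribute $\binom{s+3}{3}$ monomials, and one must show that not all of the extra monomials involving $R$ can be arranged to cancel the surplus down below $10$. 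The cleanest route: restrict (specialize variables) to recover known low-variable cases, or count degrees of freedom — the number of monomials of $g^s$ that are divisible by a monomial of $R$ but lie outside the span of the pure block is at least $h-4\ge 1$ new terms per factor of $R$, and a Newton-polytope / extremal-vertex argument shows the vertices of the Newton polytope of $g^s$ are exactly $s$ times the vertices of that of $g$, so $g^s$ has at least as many vertices as $g$, i.e. at least as many "corner" monomials as $g$ has, and combining with the $\binom{s+3}{3}$ interior count forces $\ge 10$.

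The main obstacle I anticipate is precisely this last step: ruling out massive cancellation in $g^s$ when $g$ has five or more terms in four variables, without descending into an unmanageable case analysis. The descent handles $\rho\le 1$ cleanly via Propositions \ref{rho1}–\ref{rho2}, but for $2\le\rho\le 5$ the reduced equation can still have $\sigma'=3$ or fewer variables where Remark \ref{k123} already gives $\tilde k(3)=6$, so I would want to show that each specialization step that removes a variable removes \emph{exactly} two terms in the extremal configurations, pinning down $k'=k-2$ and $\sigma'=\sigma-1$, and then invoking $\tilde k(3)=6$ (from Remark \ref{k123}) to conclude $k\ge 6+2\cdot\lceil\text{something}\rceil$. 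Making the bookkeeping of "which terms survive a specialization" precise — in particular handling the case where the specialized monomial $\tilde T$ happens to equal one of the $X_j^{l_j}$ and causes an \emph{extra} cancellation or merge — is the delicate part, and is where I would spend most of the proof. The Newton polytope observation (vertices scale by $s$) is the tool I would lean on to keep the four-variable endgame finite and transparent.
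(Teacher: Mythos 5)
Your descent step does not land where you need it to. Imposing $a_5T_1=-a_6T_2$ and solving for one indeterminate removes that variable and cancels two monomials, but the pure power $a_iX_i^{l_i}$ of the eliminated variable turns into a \emph{new} free monomial $a_i\tilde a^{l_i}\tilde T^{l_i}$ in the remaining variables, so one step takes $(\sigma,\rho)$ to $(\sigma-1,\rho')$ with $\rho'\le\rho-1$ (not $\rho-2$ as you write) and $k'\le k-2$. Starting from $k=9$, $\sigma=4$, $\rho=5$, two steps give at best $\sigma''=2$, $k''\le 5$, $\rho''\le 3$; but Remark \ref{rho0} and Propositions \ref{rho1} and \ref{rho2} only cover $\rho\le 2$, so the reduced equation falls outside the range of everything proved so far and the descent does not close. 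This is precisely why the paper's proof abandons the Laurent-type specialization here and exploits the hypothesis that $g$ is an honest polynomial: it specializes variables to $0$. Setting $X_i=0$ kills the pure power $a_iX_i^{l_i}$ \emph{and} every monomial $T_j$ divisible by $X_i$ without creating any new free monomial, so a single zero-specialization can remove three or more terms at once; the proof then splits on $k=7,8,9$ according to how many of the $T_j$ each variable divides, invokes $\tilde k(3)=6$ from Remark \ref{k123}, and in the $k=8$ and $k=9$ cases reaches a contradiction by producing two different zero-specializations that force incompatible shapes of $f$ (one forcing $f(T)=T^s$ via Proposition \ref{rho2}, another forcing $f(T)=T^{m_1}+cT^{m_2}$ via Proposition \ref{rho1}); this also disposes of the binomial-$f$ branch that you defer.

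The second gap is the endgame. The claim that $g^s$ must retain at least $\binom{s+3}{3}\ge 10$ terms when $g$ has a fifth monomial $R\ne 0$ is exactly the hard point, and the Newton-polytope remark does not settle it: the vertices of $N(g^s)=sN(g)$ only guarantee as many surviving terms as $N(g)$ has vertices, while the $\binom{s+3}{3}$ ``pure'' products can in principle cancel against cross terms involving $R$ --- you acknowledge this yourself. The paper never proves such a general no-cancellation statement; in the one place where it must rule out cancellation (the final subcase of $k=9$, where $s=2$ and the shape of $g$ is already heavily constrained), it runs a targeted argument: among the mixed monomials of $g$ containing $X_1$ it picks the lexicographically maximal one $\tilde m$ and shows that $\tilde m\cdot\tilde M$, with $\tilde M$ the maximal term of $g$, cannot be produced in any other way in the square expansion and hence survives in $g^2$, contradicting the assumed absence of mixed products involving $X_1$. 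Proving your general statement for all $h\ge 5$ and all $s\ge 2$ in four variables amounts to bounding the cancellation invariant $C(f,g)$, which Section 4.2 of the paper explicitly identifies as the hard open problem behind the question of whether $\tilde k(\sigma)=\binom{\sigma+1}{2}$.
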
 

\begin{proof}
We only need to work on the cases $k=7,8,9$.
	\begin{enumerate}
		\item Assume $k=7$. Assume that $X_4$ appears in the monomial $T_1$; therefore, with the specialization $X_4=0$, we obtain that $f(g(X_1,X_2,X_3,0))$ is still a composition of the form $(*)$ (with $3$ variables) with at most $5$ terms, which yields a contradiction since $\tilde{k}(3)=6$ by Remark \ref{k123}.
		\item Suppose $k=8$. Now, assume that there is one variable $X_i$ dividing at least two monomials $T_j, T_r$. Then, with the specialization $X_i=0$, we obtain again a composition of the form $(*)$ in $3$ variables having between three and five terms, yielding again a contradiction by Remark \ref{k123}. Hence, each variable divides at most one of the four monomials $T_i$: clearly, this leads to the equation (up to a rearrangement) 
		$$f(g(X_1,\dots,X_4))=a_1X_1^{l_1}+a_2X_2^{l_2}+a_3X_3^{l_3}+a_4 X_4^{l_4}+a_5X_1^{l'_1}+a_6X_2^{l'_2}+a_7X_3^{l'_3}+a_8 X_4^{l'_4}.$$
		However, at this point, if $\tilde{g}(X_1,X_2)=g(X_1,X_2,0,0)$, with the substitution $X_3=X_4=0$ we obtain $$f(\tilde{g}(X_1,X_2))=a_1X_1^{l_1}+a_2X_2^{l_2}+a_5X_1^{l'_1}+a_6X_2^{l'_2},$$ which by Proposition \ref{rho2} implies either $f(T)=T^2$ or $f(T)=T^3$.
		
		On the other hand, if $\bar{g}(X_1)=g(X_1,0,0,0)$, with the specialization $X_2=X_3=X_4=0$ we obtain $$f(\bar{g}(X_1))=a_1X_1^{l_1}+a_5X_1^{l'_1},$$ which in turn implies, by Proposition \ref{rho1}, that $f(T)$ has the form $f(T)=T^{m_1}+cT^{m_2}$, which is impossible.
		
		\item Finally, let $k=9$. In that case, clearly there exists one variable dividing at least two between the monomials $T_1,\dots,T_5$. Notice that if there is an indeterminate $X_i$ dividing at least three monomials, we would obtain once again a contradiction with the specialization $X_1=0$ and Remark \ref{k123}. Hence, each indeterminate appears in at most two monomials, and there is at least one variable appearing in exactly two. Let $X_1$ be one of the variables appearing in the maximum number of terms of our composition; then $X_1$ divides exactly two monomials (say, $T_4$ and $T_5$), and all other variables divide at most two monomials between $T_1,\dots,T_5$. Then, with the specialization $X_1=0$ we have $$f(g(0,X_2,X_3,X_4))=a_2X_2^{l_2}+a_3X_3^{l_3}+a_4X_4^{l_4}+ \sum_{i=1}^{3} a_{4+i}T_i(0,X_2,X_3,X_4).$$
		
		Now, let $X_2$ be the variable that divides the most monomials among $T_1,T_2,T_3$. As we said, $X_2$ must divide at most two of these three. We have two possible cases:
		\begin{enumerate}
			\item Assume that $X_2$ divides exactly one monomial (say, $T_3$). Then, setting $X_2=0$ we are left with the equation $$f(g(0,0,X_3,X_4))=a_3X_3^{l_3}+a_4X_4^{l_4}+  a_{5}T_1(0,0,X_3,X_4)+a_6T_2(0,0,X_3,X_4),$$
			with the added information that $X_3$ and $X_4$ both divide exactly one of the monomials $T_1,T_2$.
			This equation is a polynomial composition in $2$ variables; thus by Proposition \ref{rho2} we obtain that either $f(T)=T^2$ or $f(T)=T^3$. However, assuming that $X_3$ divides $T_2$, further setting $X_3=0$ we get 
			$$f(g(0,0,0,X_4))=a_4X_4^{l_4}+  a_{5}T_1(0,0,X_3,X_4),$$
			which implies by Proposition \ref{rho1} that $f(T)$ is of the form $f(T)=T^{m_1}+cT^{m_2}$, contradicting the previous statement.
			\item Suppose then that $X_2$ divides exactly two of these monomials (say, $T_2$ and $T_3$). Then by imposing $X_2=0$ we obtain $$f(g(0,0,X_3,X_4))=a_3X_3^{l_3}+a_4X_4^{l_4}+  a_{5}T_1(0,0,X_3,X_4),$$
			which by Proposition \ref{rho1} implies $f(T)=T^2$.
			
			Now, notice that, since there are four variables, each dividing at most two monomials, there must be a monomial $T_i$ containing exactly one variable, say $T_i=X_j^{l'_j}$. However, if there is exactly one monomial containing only the variable $X_j$ between $T_1,\dots,T_5$, clearly by sending all other variables to zero we would obtain an equation of the form $$f(\tilde{g}(X_j))=a_jX_j^{l_j}+a_{i+4}\tilde{T}_i(X_j),$$ which would imply by Proposition \ref{rho1} that $f(T)=T^{m_1}+cT^{m_2}$, contradicting the previous part; thus, since each indeterminate can divide at most two monomials, we can conclude (since each variable divides at most two monomials) that there is a variable $X_s$, dividing exactly two monomials, such that those two monomials do not contain other variables besides $X_s$. Now, let us rearrange our indexes such that this variable $X_s$ is $X_1$ and the two monomials $T_4$ and $T_5$ are of the form $T_4=X_1^{v_4}$, $T_5=X_1^{v_5}$ (we can do that since this does not contradict our previous assumption on $X_1$). 
			
			Therefore our main equation becomes $$g(X_1,X_2,X_3,X_4)^2=a_1X_1^{l_1}+a_2X_2^{l_2}+a_3X_3^{l_3}+a_4X_4^{l_4}+$$ $$+a_8X_1^{v_4}+a_9X_1^{v_5}+\sum_{i=1}^{3} a_{i+4} T_i(X_2,X_3,X_4).$$ 
			
			We will conclude the proof by showing that it is not possible for the square of a polynomial $g(X_1,\dots,X_\sigma)^2$ (assuming that this square has the form $(*)$) to contain no mixed product between $X_1$ and the other variables. 
			
			In fact, remembering that such a polynomial $g(X_1,\dots,X_\sigma)$ contains monomials consisting of single variables for each variable $X_i$, then it also contains some mixed products between $X_1$ and some other variables (else it would be impossible to cancel out the mixed products arising from the square expansion of $g(X_1,\dots,X_\sigma)$). Between those mixed products, pick the maximum one with respect to the natural lexicographic order, and denote it by $\tilde{m}(X_1,\dots,X_\sigma)$; also, denote by $\tilde{M}(X_1,\dots,X_\sigma)$ the maximum term of $g(X_1,\dots,X_\sigma)$. Clearly, the term $\tilde{m}(X_1,\dots,X_\sigma)\tilde{M}(X_1,\dots,X_\sigma)$ is a mixed product containing $X_1$ and some other variables, which appears in the square expansion of $g(X_1,\dots,X_\sigma)^2$; in order to check that this term does not cancel out (and thus appears in $g(X_1,\dots,X_\sigma)^2$), we consider two cases:
			\begin{enumerate}
				\item If $\tilde{m}(X_1,\dots,X_\sigma)=\tilde{M}(X_1,\dots,X_\sigma)$, then $\tilde{m}(X_1,\dots,X_\sigma)\tilde{M}(X_1,\dots,X_\sigma)=\tilde{M}^2(X_1,\dots,X_\sigma)$ appears only once in the expansion, and thus does not cancel out.
				\item If $\tilde{m}(X_1,\dots,X_\sigma)\neq \tilde{M}(X_1,\dots,X_\sigma)$, we have $\tilde{M}(X_1,\dots,X_\sigma)=X_1^{\alpha}$, and clearly it is not possible to realize  $\tilde{m}(X_1,\dots,X_\sigma)\tilde{M}(X_1,\dots,X_\sigma)$ in any other ways in the square expansion of $g(X_1,\dots,X_\sigma)$, and thus this term does not cancel out.
			\end{enumerate}
			
		\end{enumerate}
	\end{enumerate} 
\end{proof}
\subsection{Sum of sets of vectors and cancellations}
The previous results point to $\tilde{k}(\sigma)$ being close to $\binom{\sigma+1}{2}$; while a general result is out of reach at the moment, there is some evidence suggesting that this might indeed be the case. 
In fact, assuming fixed the number of terms $h=|I|$ of $g(X_1,\dots,X_\sigma)$, the number of terms $k$ of the composition $f(g(X_1,\dots,X_\sigma))$ is obtained by expanding the sum  \begin{equation}
f(g(X_1,\dots,X_\sigma))=\sum_{j \in J} f_j g(X_1,\dots,X_\sigma)^j \tag{$\star$},
\end{equation} and then cancelling out some terms.  Then, in order to study $\tilde{k}(\sigma,h)$, we can consider two invariants:
\begin{enumerate}
	\item The number of different exponents in the expansion $(\star)$ of $f(g(X_1,\dots,X_\sigma))$, counting all exponents appearing before any cancellation between terms belonging to different powers of $g(X_1,\dots,X_\sigma)$ is performed. We will denote this value by $W(f,g)$, and is obviously an upper bound for $k$.
	\item The number of exponents (among the $W(f,g)$ listed before) appearing in the expansion that are cancelled out in the final computation of $f(g(X_1,\dots,X_\sigma))$. We will denote this value by $C(f,g)$; obviously $k= W(f,g)-C(f,g)$.
\end{enumerate}

The motivation behind this division lies in the fact that the two invariants $W(f,g)$ and $C(f,g)$ describe very different problems.

In fact, we can study $W(f,g)$ using tools from additive number theory. By definition we can associate to the exponents of $g(X_1,\dots,X_\sigma)$ the set $I \subseteq \mathbb{Z}^\sigma$, and, for a fixed integer $\alpha \ge 1$, the exponents of $g(X_1,\dots,X_\sigma)^\alpha$ described by $W(f,g)$ are exactly the elements of the set $\alpha I$. Therefore, by studying $W(f,g)$ we are basically studying the cardinality of the union set $\displaystyle \bigcup_{\alpha \in J} \alpha I$, where $I \subseteq \mathbb{Z}^\sigma$ and $J \subseteq \mathbb{Z}$ are fixed. In this context, the following result, due to Ruzsa, comes handy.
\begin{theorem}[\protect{\cite[Corollary 1.1]{R2}}]
	Let $A,B \subseteq \mathbb{R}^\sigma$  be two sets such that $|A| \le |B|$, and assume that there is no proper hyperplane of $\mathbb{R}^\sigma$ containing the set $A+B$. Then $$|A+B| \ge |B|+\sigma |A|- \frac{\sigma(\sigma+1)}{2}.$$	
\end{theorem}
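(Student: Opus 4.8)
\emph{Proof idea.} This is a statement about finite subsets of $\mathbb{R}^\sigma$ that is invariant under translating $A$ and $B$ independently and under applying one common invertible linear map, so I would first normalise so that the affine hull of $A+B$ is all of $\mathbb{R}^\sigma$, and then argue by induction on $\sigma$, proving the symmetric form $|A+B|\ge\max(|A|,|B|)+\sigma\min(|A|,|B|)-\binom{\sigma+1}{2}$ (which is exactly what the hypothesis $|A|\le|B|$ amounts to). For $\sigma\le1$ this is the classical estimate $|A+B|\ge|A|+|B|-1$, proved by ordering: if $A=\{a_1<\dots<a_m\}$ and $B=\{b_1<\dots<b_n\}$ then $a_1+b_1<a_1+b_2<\dots<a_1+b_n<a_2+b_n<\dots<a_m+b_n$ are $m+n-1$ distinct sums, and $|A|+|B|-1=|B|+1\cdot|A|-\binom{2}{2}$.

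For the inductive step I would slice along a carefully chosen hyperplane. If $A$ is a single point the bound is immediate since $\binom{\sigma+1}{2}\ge\sigma$, so assume $|A|\ge2$. Since $\operatorname{conv}(A+B)=\operatorname{conv}A+\operatorname{conv}B$ is a full-dimensional polytope, its facet normals span $\mathbb{R}^\sigma$, so I may pick a facet normal $\psi$ that is non-constant on $A$. Write $\psi(A)=\{\alpha_1<\dots<\alpha_p\}$ with $p\ge2$ and $\psi(B)=\{\beta_1<\dots<\beta_q\}$, split $\mathbb{R}^\sigma=H\oplus\mathbb{R}v$ with $H=\ker\psi\cong\mathbb{R}^{\sigma-1}$ and $\psi(v)=1$, and let $A_i,B_j\subseteq H$ be the corresponding fibres, so $\sum_i|A_i|=|A|$ and $\sum_j|B_j|=|B|$. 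Exactly as in the one-dimensional argument, the fibre of $A+B$ over $\alpha_1+\beta_j$ contains $A_1+B_j$ and the fibre over $\alpha_i+\beta_q$ contains $A_i+B_q$, and the $p+q-1$ levels $\alpha_1+\beta_1<\dots<\alpha_1+\beta_q<\alpha_2+\beta_q<\dots<\alpha_p+\beta_q$ are distinct, whence
$$|A+B|\ \ge\ \sum_{j=1}^{q}|A_1+B_j|\ +\ \sum_{i=2}^{p}|A_i+B_q|.$$
Because $\psi$ was chosen to be a facet normal, the bottom layer $A_1+B_1$ (which is the $\psi$-minimal layer of $A+B$, so contains all the vertices of that facet) spans the facet and is therefore genuinely $(\sigma-1)$-dimensional in $H$, so the inductive hypothesis applies to $|A_1+B_1|$ with the full loss $\binom{\sigma}{2}$; to the remaining summands I would apply the inductive hypothesis in whatever dimension each $A_i+B_j$ actually spans, in the worst case only the trivial bound $|X+Y|\ge|X|+|Y|-1$ (valid in any $\mathbb{R}^n$, by projecting generically to a line). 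Substituting these and using $\binom{\sigma+1}{2}=\binom{\sigma}{2}+\sigma$, the estimate should telescope down to $|B|+\sigma|A|-\binom{\sigma+1}{2}$.

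The heart of the difficulty — and the step I expect to be the main obstacle — is the interplay between the choice of slicing direction and the final arithmetic. A merely generic direction makes every fibre a singleton and produces only $|A+B|\ge|A|+|B|-1$, which is far too weak once $\sigma\ge2$; it is precisely the convex-geometric input (that the supporting hyperplane of a facet of the Minkowski sum forces its extreme layer to be $(\sigma-1)$-dimensional, together with $\operatorname{conv}(A+B)=\operatorname{conv}A+\operatorname{conv}B$) that makes the recursion close with the right constant. Beyond that, one must keep track of the dimensions spanned by the various $A_i+B_j$, use the hypothesis $\dim(A+B)=\sigma$ to exclude the degenerate slicings where too much ``mass'' sits in a low-dimensional layer, and invoke $|A|\le|B|$ at exactly the right moment — without it the symmetric-looking intermediate estimates are off by a term of order $(\sigma-1)(|B|-|A|)$. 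None of these points is deep, but together they make the proof genuinely case-heavy, which is presumably why the paper simply cites Ruzsa's result rather than reproducing it.
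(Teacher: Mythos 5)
First, a point of order: the paper does not prove this statement at all --- it is imported verbatim from Ruzsa \cite{R2} and used as a black box --- so there is no internal proof to compare yours with; I can only judge the proposal against the statement itself. Your convex-geometric setup is correct: the facet normals of $\operatorname{conv}(A+B)=\operatorname{conv}A+\operatorname{conv}B$ span the dual space, so one of them is non-constant on $A$ once $|A|\ge 2$; the $\psi$-minimal layer of $A+B$ is exactly $A_1+B_1$, its convex hull is the corresponding facet, so it is genuinely $(\sigma-1)$-dimensional; and the skeleton inequality $|A+B|\ge\sum_{j=1}^{q}|A_1+B_j|+\sum_{i=2}^{p}|A_i+B_q|$ is valid.

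The gap is precisely where you suspect it: the arithmetic does not telescope. You harvest the $(\sigma-1)$-dimensional gain from the single layer $A_1+B_1$, and that gain is only $(\sigma-1)\bigl(\min(|A_1|,|B_1|)-1\bigr)$ beyond the trivial estimate, whereas the target exceeds the trivial bound $|A|+|B|-1$ by $(\sigma-1)|A|-\binom{\sigma+1}{2}+1$; nothing forces $|A_1|$ to be comparable to $|A|$. A concrete failure already in the first nontrivial case $\sigma=2$: take $A=\{(0,1),(1,2),(0,3)\}$, $B=\{(0,0),(1,0),(5,1)\}$ and $\psi=$ the second coordinate. This $\psi$ is a facet normal of $\operatorname{conv}(A+B)$ (the $\psi$-minimal face is the edge from $(0,1)$ to $(1,1)$) and is non-constant on $A$, so it is an admissible choice under your selection rule. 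Here $p=3$, $q=2$, every fibre of $A$ is a singleton and $B_q$ is a singleton, so your right-hand side is $|A_1+B_1|+|A_1+B_2|+|A_2+B_2|+|A_3+B_2|=2+1+1+1=5$, while the theorem asserts $|A+B|\ge|B|+2|A|-3=6$ (the true value is $9$). Other facet normals happen to work for this particular pair, but your criterion does not distinguish them, and it is not clear that a uniformly good choice exists. To close the argument one must gain $\sigma-1$ extra sums for essentially every element of $A$, not only for the elements of one fibre; that requires either applying the $(\sigma-1)$-dimensional hypothesis to every layer (and then controlling the dimension each $A_i+B_j$ actually spans, a much heavier bookkeeping task) or a differently organized induction. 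As it stands, the proposal is an honest sketch of a natural first attempt, but it does not constitute a proof.
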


In our context, the set $I$ has dimension at least equal to $\sigma-1$ (since $f(g(X_1,\dots,X_\sigma))$ must contain $\sigma$ multiplicatively independent terms); then, if  $h=|I| \ge \sigma-1$, from the previous theorem we immediately deduce that $$|\alpha I | \ge h + (\alpha-1)\left[ (\sigma - 1) h - \frac{\sigma(\sigma-1)}{2}\right],$$
which yields $$W(f,g)=|\bigcup_{\alpha \in J} \alpha I | \ge |deg(f)I| \ge h + (deg(f)-1)\left[ (\sigma - 1)h - \frac{\sigma(\sigma-1)}{2}\right].$$ 

We can deduce from here that our claim is true in the naive case $C(f,g)=0$ (for instance if $f(T)$ and $g(X_1,\dots,X_\sigma)$ have positive real coefficients).

\begin{proposition}\label{sigmapos}
	Let $\sigma \ge 1$ and $\rho \ge 0$ be integers, $g\in\mathbb{C}[X_1^{\pm 1},\ldots,X_\sigma^{\pm 1}]$ be a Laurent polynomial, having exactly $h$ terms, in the indeterminates $\vect{X} = (X_1,\ldots,X_\sigma)$, and let $f \in \mathbb{C}[T]$ be such that $$f(g(\vect{X}))=a_1X_1^{l_1}+\dots+a_\sigma X_{\sigma}^{l_\sigma}+a_{\sigma+1}T_1(\vect{X})+\dots+a_{\sigma+\rho}T_\rho(\vect{X}),$$
	with $l_1,\dots,l_\sigma$ positive integers, $a_1,\dots,a_{\sigma+\rho} \in \mathbb{C}$ and $T_1, \dots, T_\rho \in \mathbb{C}[X_1^{\pm 1},\dots,X_\sigma^{\pm 1}]$ monomials in the indeterminates $X_1,\dots,X_\sigma$.
	Let $k = \sigma+\rho$ be the number of terms of this polynomial composition.
	
	Then
	$$W(f,g) \ge  h + (deg(f)-1)\left[ (\sigma - 1)h - \frac{\sigma(\sigma-1)}{2}\right] \ge \sigma h - \frac{\sigma(\sigma-1)}{2} \ge \binom{\sigma+1}{2}.$$
\end{proposition}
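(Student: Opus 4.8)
The plan is to establish the four inequalities of the displayed chain one at a time. The first is a quantitative lower bound for $W(f,g)=\bigl|\bigcup_{\alpha\in J}\alpha I\bigr|$ coming from the result of Ruzsa stated above, essentially as in the discussion preceding the statement; the remaining three are elementary once one knows that $h\ge\sigma$, so that is the first thing I would record. Since $f(g(\vect X))$ contains the monomials $X_1^{l_1},\dots,X_\sigma^{l_\sigma}$, whose exponent vectors $l_1\vect e_1,\dots,l_\sigma\vect e_\sigma$ are linearly independent in $\mathbb{R}^\sigma$, and since every exponent occurring in $f(g(\vect X))$ belongs to $\bigcup_{\alpha\in J}\alpha I$, I would argue as follows: fixing $v_0\in I$ and writing $V=\operatorname{span}(I-I)$, one has $\alpha I\subseteq \alpha v_0+V$ for every $\alpha\ge 1$, hence $\bigcup_{\alpha}\alpha I\subseteq \mathbb{R}v_0+V$, a linear subspace of dimension at most $\dim V+1$. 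As this subspace contains $\sigma$ linearly independent vectors, $\dim V=\dim I\ge\sigma-1$; and a subset of $\mathbb{Z}^\sigma$ of affine dimension $\ge\sigma-1$ has at least $\sigma$ elements, so $h=|I|\ge\sigma$. Note also that consequently $\dim I\in\{\sigma-1,\sigma\}$.

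Next I would prove the first inequality. Put $d=\deg f\ge 2$; since $f$ is monic, $d\in J$, so $dI\subseteq\bigcup_{\alpha\in J}\alpha I$ and $W(f,g)\ge|dI|$. To bound $|dI|$ I would apply the result of Ruzsa iteratively, inside the affine hull of $I$, with $A=I$ and $B=(\alpha-1)I$: the hypothesis $|A|\le|B|$ holds because $|(\alpha-1)I|\ge|I|$ (translate $(\alpha-1)I$ by an element of $I$), and the relevant sumset has the affine dimension of $I$, since every $\alpha$-fold sumset of $I$ does (taking differences shows $\operatorname{span}(\alpha I-\alpha I)=V$). Induction on $\alpha$ then gives
\[
|\alpha I|\ \ge\ h+(\alpha-1)\Bigl[(\dim I)\,h-\tfrac{\dim I(\dim I+1)}{2}\Bigr].
\]
Taking $\alpha=d$: if $\dim I=\sigma-1$ this is precisely $h+(d-1)\bigl[(\sigma-1)h-\tfrac{\sigma(\sigma-1)}{2}\bigr]$, while if $\dim I=\sigma$ it is $h+(d-1)\bigl[\sigma h-\tfrac{\sigma(\sigma+1)}{2}\bigr]$, which exceeds the former by $(d-1)(h-\sigma)\ge 0$. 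So the first inequality holds in either case.

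Finally, the last two inequalities are pure algebra. The difference between the two sides of the second one factors as $(\sigma-1)(d-2)\bigl(h-\tfrac{\sigma}{2}\bigr)$, which is $\ge 0$ because $\sigma\ge 1$, $d\ge 2$ and $h\ge\sigma$; and the third, $\sigma h-\tfrac{\sigma(\sigma-1)}{2}\ge\binom{\sigma+1}{2}=\tfrac{\sigma(\sigma+1)}{2}$, rearranges to $\sigma h\ge\sigma^2$, i.e.\ $h\ge\sigma$, which we have. I expect the only genuinely delicate point to be the bookkeeping in the iterated use of Ruzsa's result --- checking its hypotheses at each step and correctly working in the ambient dimension $\dim I$ (which may be $\sigma-1$ rather than $\sigma$) --- everything else being either quoted or a one-line verification.
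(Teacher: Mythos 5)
Your argument is correct and follows essentially the same route as the paper, which proves the first inequality in the discussion preceding the statement (dimension of $I$ at least $\sigma-1$, then an iterated application of Ruzsa's sumset bound to $|dI|\le W(f,g)$) and leaves the remaining inequalities of the chain implicit. Your additions --- the explicit verification that $h\ge\sigma$, the case distinction $\dim I\in\{\sigma-1,\sigma\}$, and the factorization $(\sigma-1)(d-2)\bigl(h-\tfrac{\sigma}{2}\bigr)$ for the middle inequality --- are exactly the bookkeeping the paper omits, and they check out.
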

The lower bounds provided in Proposition \ref{sigmapos} are sharp; in fact, it is easy to see that, for $$f(T)=T^2 \ \ , \ \ \ g(X_1,\dots,X_\sigma)=X_1+\dots+X_\sigma+\sum_{i=2}^{h-\sigma+1} \frac{X_1^i}{X_\sigma^{i-1}},$$
the composition $f(g(X_1,\dots,X_\sigma))=g(X_1,\dots,X_\sigma)^2$ has exactly $\sigma h - \frac{\sigma(\sigma-1)}{2}$ terms.

Furthermore, we can rewrite the second bound as $$W(f,g) \ge \sigma h - \frac{\sigma(\sigma-1)}{2}=\binom{\sigma+1}{2}+\sigma(h-\sigma).$$
Therefore, if we could prove that $C(f,g) \le \sigma(h-\sigma)$, we would obtain the desired bound for $\tilde{k}(\sigma)$. Since $g(X_1,\dots,X_\sigma)$ must contain, by definition, at least $\sigma$ multiplicatively independent terms, this upper bound for $C(f,g)$ basically states that for each term of $g(X_1,\dots,X_\sigma)$, besides the $\sigma$ multiplicatively independent ones, there can be no more than $\sigma$ cancellations. 

However, studying $C(f,g)$ is an extremely hard task. In fact, each cancellation would imply a polynomial relation between the coefficients of the involved monomials (from the expansion $(\star)$) and an equality on the exponents.

The first problem can be reduced to a study of intersection of algebraic surfaces, which is in itself a very hard problem, given our little knowledge on the behaviour of these coefficients.

As for the second one, in order to have a cancellation involving terms of (at least) two different polynomial powers $g(X_1,\dots,X_\sigma)^{j_1}$ and $g(X_1,\dots,X_\sigma)^{j_2}$ there should exist a monomial $X_1^{\alpha_1}\dots X_\sigma^{\alpha_\sigma}$ appearing in both these powers; clearly, this is equivalent to saying that the vector $(\alpha_1,\dots,\alpha_\sigma) \in \mathbb{Z}^\sigma$ must belong to both sets $j_1I$ and $j_2I$. 

Thus we have to face the following additive problem:
\begin{question}\label{NS3factq}
	
	Let $\sigma \ge 1$, and consider two finite sets $I \subseteq \mathbb{Z}^\sigma$ and $J \subseteq \mathbb{Z}^+$. Set $|I|=h$. 
	
	Given a vector $\vect{w} \in \mathbb{Z}^\sigma$, determine all factorizations of $\vect{w}$ of the type $$\vect{w}=c_1\vect{v}_1+\dots+c_k\vect{v}_k,$$
	
	where $\vect{v}_1,\dots,\vect{v}_k \in I$ and $c_1,\dots,c_k \in \mathbb{Z}^+$ are such that $c_1+\dots+c_k \in J$.
\end{question}
Additive decompositions have been the subject of several works (\cite{NS3G} and \cite{S3G} are good monographs on this argument); in general, these decompositions are not unique, and finding the possible decompositions of a given vector with respect to a finite set of generators $I$ is very hard. In fact, the easiest case $\sigma=1$ (where our vectors are, actually, integers) is a reformulation of the well-known \emph{Subset Sum Problem} (\cite{GJSSP}), which asks, given a finite set $I \subseteq{\mathbb{Z}}$, if there exists a subset $J$ of $I$ such that the sum of the elements of $J$ is a target value $w$; however, the Subset Sum Problem is \textbf{NP}-complete (see \cite{GJSSP} for a proof).

In light of both the evidence provided and the final considerations, we conclude this work with the following question:
\begin{question}
	Is it true that $\tilde{k}(\sigma) = \binom{\sigma+1}{2}$?
\end{question}
	
\section*{Acknowledgements}
This work is part of my PhD thesis. I would like to thank my advisors, Professors Roberto Dvornicich and Umberto Zannier for their supervision, and for helpful discussions. I would also like to thank the referee for their helpful comments.

\appendix

\section{Perfect powers in base $x$ and polynomial powers}

In this Appendix, we will apply Proposition \ref{4} on lacunary polynomial powers to study perfect powers in a given base $x$ having exactly $k$ non-zero digits, using (a part of) a method developed by Corvaja and Zannier in \cite{CZ1}. First, notice that, dividing by a power of $x$, we can assume without loss of generality that the units digit is non-zero, obtaining the Diophantine equation
\begin{equation}\label{maincase}
y^d=c_0+\sum_{i=1}^{k-1} c_ix^{m_i},
\end{equation} 
with fixed $k \in \mathbb{N}$, $y,d,x$ positive integers greater than $1$, $c_0,c_1,\dots,c_{k-1} \in \{1,\dots,x-1\}$ and $m_1 < \dots < m_{k-1}$ positive integers. This problem is actually quite complex; to get a feel of its difficulty, notice that the case $k=2, c_0=c_1=1$ is the well-known Catalan Conjecture, which stood open for more than a century and was proved by Mihailescu in \cite{M1}.

Here, we describe Corvaja and Zannier's method. Recall that the logarithmic Weil height of a rational number $\frac{a}{b} \in \mathbb{Q}$ in lowest terms is defined as $h\left( \frac{a}{b}\right)=\log \max\{|a|,|b|\}$, with the assumption that $h(0)=0$. The absolute logarithmic Weil height of an element $\alpha$ in a number field $K$ is defined as $$h(\alpha)=\frac{1}{[K : \mathbb{Q}]}\sum_{v \in M_K} [K_v : \mathbb{Q}_v] \log \max\{|\alpha|_v,1\},$$
where $M_K$ is a normalized set of inequivalent absolute values $|\cdot|_v$ defined on $K$. Further, the Weil height of a point $P=[\alpha_0:\ldots:\alpha_n]$ in a projective space $\mathbb{P}_n(K)$ is $$h(P)=\frac{1}{[K : \mathbb{Q}]}\sum_{v \in M_K} [K_v : \mathbb{Q}_v] \log \max_{0 \le i \le n}\{|\alpha|_v\}.$$
Moreover, if $S \subset M_K$ is a finite set of absolute values of $K$ containing the archimedean ones $S_\infty$, we will say that $x \in K$ is a $S$-integer if $|x|_v \le 1$ for every $v \not \in S$, and we denote the ring of $S$-integer elements of $K$ by $\mathcal{O}_{S,K}$. Invertible elements in $\mathcal{O}_{S,K}$ are called $S$-units (see \cite{BG}, \cite{CZ3}), and several results in the literature deal with properties and distributions of $S$-integral points. In particular, our problem also falls in this category.

We will consider the following cases:
\\
\underline{\textbf{First case}}: Let $C_{k-2} \in ]0,1[$  be a constant, and assume that there is a linear gap between the two leftmost non-zero digits, that is,  $m_{k-1} \le C_{k-2}m_{k-2}$. 

Dividing equation (\ref{maincase}) by $x^{m_{k-1}}$, we obtain  $$y^dx^{-m_{k-1}}=c_0+\sum_{i=1}^{k-1} c_ix_i,$$
 where $n_i=m_{k-1}-m_{k-1-i}$, for $i=1,\dots,k-2$, $n_{k-1}=m_{k-1}$, and $x_i=x^{-n_i}$ for $i=1,\dots,k-1.$
Define now the series $F(X_1,\dots,X_{k-1}) \in \mathbb{Q}[\![X_1,\dots,X_{k-1}]\!]$ obtained applying the Binomial Theorem  $$F(X_1,\dots,X_{k-1}):=\left[c_0+\sum_{i=1}^{k-1} c_iX_i \right]^{\frac{1}{d}}=c_0^{\frac{1}{d}}\left[ 1 + \sum_{i=1}^{k-1}\frac{c_i}{c_0}X_i \right]^{\frac{1}{d}}  = $$ $$=c_0^{\frac{1}{d}} \left[ 1+\frac{1}{d}\left( \sum_{i=1}^{k-1} \frac{c_i}{c_0}X_i \right)+\frac{1-d}{2d^2}\left( \sum_{i=1}^{k-1} \frac{c_i}{c_0}X_i \right)^2+\dots \right].$$
This expression converges absolutely if $\displaystyle \sum_{i=1}^{k-1} \frac{c_i}{c_0}X_i < 1$, then, for instance, noticing that $c_i \le x-1$ for $i=0,\dots,k-1$, it converges absolutely for $X_1,\dots,X_{k-1} \in \mathbb{C}$ such that $|X_i| < \frac{1}{(k-1)(x-1)}$, to a function, which we will denote by (slightly abusing our notation) $F$, that takes the value $c_0^{\frac{1}{d}}$ at the origin and is such that $\displaystyle F(X_1,\dots,X_{k-1})^d=c_0+\sum_{i=1}^{k-1}c_iX_i$. Since $n_{k-1} > n_{k-2} > \dots > n_1 = m_{k-1}-m_{k-2} \ge (1-C_{k-2})m_{k-1}$, for sufficiently large values of $m_{k-1}$ we have $$x^{-n_{k-1}}=x^{-m_{k-1}} \le \frac{1}{(k-1)(x-1)},$$ thus the series converges at $(x_1, \dots,x_{k-1})$; moreover, taking $z=F(x_1,\dots,x_{k-1})$, this yields $$z^d=c_0+c_1x^{-n_1}+\dots+c_{k-1}x^{-n_{k-1}}.$$

Let $K$ be the splitting field of $Y^d-x$ over $\mathbb{Q}$; since $z=yx^{-\frac{m_{k-1}}{d}}$, our sequence of solutions is defined over $K$. Further, if $S$ is the finite set of  places defined over $K$ consisting of the ones lying over either $\infty$ or $x$; then, by definition, $z$ is an $S$-integer, while $x_1,\dots,x_{k-1}$ are $S$-units. Moreover, the usual absolute value on $\mathbb{C}$ induces an absolute value on $\mathbb{Q}(z)$, which we can further extend to an infinite place $v$ defined over $K$; thus, embedding $K$ in $\mathbb{C}$ by means of $v$ we obtain that $z=F(x_1,\dots,x_{k-1})$ also with respect to $v$-adic convergence.

It is easy to check that $\displaystyle \sum_{i=1}^{k-1} h(x_i) \le (k-1)m_{k-1}$; further, since we have $\max_i |x_i|_v \le x^{-(1-C_{k-2})m_{k-1}}$, it follows that $$\sum_{i=1}^{k-1} h(x_i)=O(-\log(\max_i |x_i|_v)),$$ whence the convergence implies that $h(z)=h(F(x_1,\dots,x_{k-1})) \le 2m_{k-1}$. Hence, we fall under the assumptions of the following theorem.
\begin{theorem}[\protect{\cite[Theorem 1]{CZ3}}]\label{fatone}
	Let $K$ be a number field, $v$ a place defined over $K$ and let $C_v$ a completion of an algebraic closure of $K_v$. Let $S$ be a finite set of absolute values of $K$ containing $S_\infty$, and define the \textbf{$S$-height}  $h_S(x)=\sum_{v \not \in S} \log^+ |x|_v$ of a non-zero element $x \in K^*$. 
	
	Next, let $f(X)=\displaystyle \sum_i a_i X^i$ be a power series with algebraic coefficients in $C_v$ converging in a neighborhood of the origin in $C_v^n$; let $x_h=(x_{h1},\dots,x_{hn})$, $h \in \mathbb{N}$ be a sequence in $(K^*)^n$ tending to the origin of $K_v^n$, such that $f(x_h)$ is well defined and belongs to $K$.
	
	Suppose that:
	\begin{enumerate}
		\item For $i=1,\dots,n$ we have $h_S(x_{hi})+h_S(x_{hi}^{-1})=o(h(x_{hi}))$ as $h \rightarrow +\infty$;
		\item $h(x_h)=O(-\log(\max_i |x_{hi}|_v))$;
		\item $h_S(f(x_h))=o(h(x_h))$;
		\item $h(f(x_h))=O(h(x_h))$.
	\end{enumerate}
	Then there exist a finite number of cosets $u_1H_1,\dots,u_rH_r \in \mathbb{G}_m^n$ such that $\{x_h\} \subset \bigcup_{i=1}^{r} u_iH_i$ and such that, for $i=1,\dots,r$, the restriction of $f(X)$ to $u_iH_i$ coincides with a polynomial in $K[X]$.
\end{theorem}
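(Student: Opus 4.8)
The plan is to deduce Theorem~\ref{fatone} from Schmidt's Subspace Theorem (in the form for systems of linear forms over a number field, incorporating $S$-units); this is the content of \cite{CZ3}, whose method I would recall as follows. Write $f(X)=\sum_i a_i X^i$ with $i$ ranging over multi-indices, and for an integer $N\ge 1$ split $f=P_N+R_N$ into its truncation $P_N(X)=\sum_{|i|<N} a_i X^i$ and the tail $R_N$ collecting the monomials of degree $\ge N$. The analytic heart is that, because $x_h\to 0$ in $K_v^n$ and $h(x_h)=O(-\log\max_i|x_{hi}|_v)$ by hypothesis (2), one has $|R_N(x_h)|_v \le c\,(\max_i|x_{hi}|_v)^N$, so that the fixed $K$-linear identity
\[
f(x_h)\cdot 1 \;-\; \sum_{|i|<N} a_i\,x_h^{\,i} \;=\; R_N(x_h)
\]
realizes a fixed linear form, in the coordinates $\big(1,\,(x_h^{\,i})_{0<|i|<N},\,f(x_h)\big)$, as a quantity that is $v$-adically much smaller than any fixed power of the height of that coordinate vector, once $N$ is chosen large.

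First I would fix the projective point $P_h:=\big[1:(x_h^{\,i})_{0<|i|<N}:f(x_h)\big]\in\mathbb{P}_{n_N}(K)$, where $n_N$ counts the monomials of degree $<N$. Hypotheses (1)--(2) give that the $x_h^{\,i}$ are $S$-units with small $S$-height, and (4) that $h(f(x_h))=O(h(x_h))$; combined with (2) these bound $h(P_h)$ in terms of $-\log\max_i|x_{hi}|_v$ up to a bounded factor and an $o$-error. Then I would set up the system of linear forms: at the place $v$ take the distinguished form $L_v(Y)=Y_f-\sum_{|i|<N}a_i Y_i$ together with coordinate forms for the remaining coordinates, and at every place $w\ne v$ take all coordinate forms. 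Using the $v$-adic smallness of $L_v(P_h)=R_N(x_h)$ and the $S$-unit control of the coordinates at the places $w\ne v$ (hypothesis (1) and $f(x_h)\in K$ with hypothesis (3)), the double product $\prod_w\prod_j |L_{w,j}(P_h)|_w/\|P_h\|_w$ is bounded by $c'\,H(P_h)^{-\varepsilon}$ for a fixed $\varepsilon>0$, provided $N$ is taken large relative to $n_N$ and the implied constants — which is exactly the hypothesis of the Subspace Theorem.

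Hence, outside a finite set, the $P_h$ lie in a finite union of proper $K$-rational hyperplanes; passing to an infinite subfamily inside a single one, $\sum_{|i|<N}\lambda_i Y_i+\mu\,Y_f=0$, there are two cases. If $\mu\ne 0$, then on this subfamily $f(x_h)=-\mu^{-1}\sum_{|i|<N}\lambda_i x_h^{\,i}$ is a fixed polynomial; letting $uH$ be the smallest coset of $\mathbb{G}_m^n$ containing infinitely many of these $x_h$, the power series $f$ and this polynomial agree on a Zariski-dense subset of $uH$, hence on $uH$, so $f|_{uH}$ is a polynomial. If $\mu=0$, then $\sum_{|i|<N}\lambda_i x_h^{\,i}=0$ is a nontrivial polynomial relation satisfied by infinitely many $x_h$, confining them to a proper subvariety of $\mathbb{G}_m^n$, and by Laurent's theorem on the intersection of subvarieties of $\mathbb{G}_m^n$ with subgroups these are covered by finitely many translates of proper subtori. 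In either case $\{x_h\}$ is partitioned into finitely many families, each living in a coset $u_iH_i$ of strictly smaller dimension on which $f$ restricts (after a monomial change of coordinates) to a convergent power series in fewer variables satisfying the same hypotheses; inducting on the dimension of the coset terminates and produces the cosets $u_1H_1,\dots,u_rH_r$ of the statement. I expect the descent step to be the main obstacle: verifying that the smallest coset through infinitely many $x_h$ is well-defined, that hypotheses (1)--(4) genuinely restrict to it, and that the polynomiality conclusion survives the passage from the truncated relation back to $f$ itself — this bookkeeping is the technical core of \cite{CZ3}.
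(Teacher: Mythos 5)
The paper does not prove this statement at all: it is imported verbatim as \cite[Theorem 1]{CZ3} and used as a black box in the Appendix, so there is no internal proof to compare yours against. That said, your sketch does reconstruct the strategy of the original Corvaja--Zannier argument: truncate $f$, feed the point $\bigl(1,(x_h^{\,i})_{|i|<N},f(x_h)\bigr)$ into the Subspace Theorem with one distinguished linear form at $v$ detecting the $v$-adic smallness of the tail $R_N(x_h)$, and descend along the resulting hyperplane relations. Two places in your outline are genuinely incomplete rather than merely compressed. First, hypothesis (1) makes the coordinates only \emph{almost} $S$-units, i.e.\ $h_s(x_{hi})+h_s(x_{hi}^{-1})=o(h(x_{hi}))$, not exact $S$-units lying in a fixed finitely generated group; so in your $\mu=0$ branch you cannot invoke Laurent's theorem or the Zariski-closure statement for finitely generated subgroups (Theorem \ref{Zarclos}) to cover the solutions of $\sum_{|i|<N}\lambda_i x_h^{\,i}=0$ by torsion cosets. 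That vanishing relation has to be processed by the $S$-unit-equation machinery itself (a further application of the Subspace Theorem in which the $o(\cdot)$ error terms are absorbed into the exponent $\varepsilon$), and it is this step that produces the multiplicative relations $x_h^{\,i}/x_h^{\,j}=\text{const}$ cutting out the lower-dimensional coset. Second, the clause ``provided $N$ is taken large relative to $n_N$'' is circular as written, since $n_N$ is determined by $N$; the actual bookkeeping is that $h(P_h)\ll N\,h(x_h)$ while hypothesis (2) gives $|R_N(x_h)|_v\ll(\max_i|x_{hi}|_v)^N\ll H(x_h)^{-cN}$, so the distinguished form gains a fixed power $H(P_h)^{-\varepsilon}$ with $\varepsilon$ independent of $N$, and the verification that the coordinate forms at the other places do not eat this gain (using (1), (3) and (4)) is precisely the quantitative core you have deferred. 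With those two repairs your outline matches the published proof; as it stands it is a correct roadmap with the decisive estimates asserted rather than established.
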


Thus there are a finite number of cosets $u_1H_1,\dots,u_rH_r \in \mathbb{G}_m^n$ such that our sequence of solutions belongs to the union of these cosets, and such that for $i=1,\dots,r$ the restriction of $F(X)$ to $u_iH_i$ coincides with a polynomial. Then, since the elements of our sequence are $S$-units, we can use the following known theorem.

\begin{theorem}[\protect{\cite[Theorem 7.4.7]{BG}}]\label{Zarclos}
	Let $\mathbb{Q}^\times$ be the multiplicative group of units of $\overline{\mathbb{Q}}$, and let $\Gamma$ be a finitely generated subgroup of $(\overline{\mathbb{Q}^\times})^n$; let $\Sigma$ be a subset of $\Gamma$. 
	
	Then the Zariski closure of $\Sigma$ in $\mathbb{G}_m^n$ is a finite union of translates of algebraic subgroups of $\mathbb{G}_m^n$.
\end{theorem}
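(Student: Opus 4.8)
The plan is to derive this statement from Laurent's theorem on the intersection of a subvariety of $\mathbb{G}_m^n$ with a finitely generated subgroup --- the Mordell--Lang statement for tori --- and, one level deeper, to recall how that theorem rests on the Subspace Theorem via the finiteness theorem for $S$-unit equations.

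First I would make the reduction to Laurent's theorem. Set $X=\overline{\Sigma}^{\,\mathrm{Zar}}$; it is enough to prove that $X$ is itself a finite union of cosets of algebraic subgroups. Laurent's theorem produces finitely many cosets $g_1H_1,\dots,g_rH_r$ of algebraic subgroups $H_i\subseteq\mathbb{G}_m^n$, each contained in $X$, with $X\cap\Gamma\subseteq\bigcup_{i=1}^r g_iH_i$. Since $\Sigma\subseteq\Gamma$ and $\Sigma\subseteq X$, we obtain $\Sigma\subseteq\bigcup_i g_iH_i\subseteq X$; the finite union $\bigcup_i g_iH_i$ is Zariski closed, so taking closures gives $X=\overline{\Sigma}\subseteq\bigcup_i g_iH_i\subseteq X$, whence $X=\bigcup_i g_iH_i$, which is the assertion.

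Then I would sketch Laurent's theorem itself, by induction on $\dim X$. One reduces to $X$ irreducible (the general case follows by treating irreducible components), and the cases $\dim X=0$ and $X=\mathbb{G}_m^n$ are immediate, so assume $0<\dim X<n$. It suffices to show that $X\cap\Gamma$ is \emph{not} Zariski dense in $X$: granting this, $\overline{X\cap\Gamma}^{\,\mathrm{Zar}}$ has components of dimension $<\dim X$, each contained in $X$, and the inductive hypothesis applied to them exhibits $X\cap\Gamma$ inside finitely many cosets contained in $X$. To prove non-density, fix generators of $\Gamma$ and a finite set $S$ of places so that the coordinates of the generators are $S$-units, and choose a nonzero Laurent polynomial $f=\sum_{j=1}^m c_j\chi_j$ of minimal support (the $\chi_j$ being characters of $\mathbb{G}_m^n$) vanishing on $X$. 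For $\gamma\in X\cap\Gamma$ one has $\sum_j c_j\chi_j(\gamma)=0$ with each $\chi_j(\gamma)$ an $S$-unit lying in one fixed finitely generated subgroup of $\overline{\mathbb{Q}}^{\,\times}$; dividing through by $c_m\chi_m(\gamma)$ turns this into a unit equation $u_1+\dots+u_{m-1}=1$. By the Evertse--Schlickewei--Schmidt finiteness theorem --- a consequence of the Subspace Theorem --- this equation has only finitely many non-degenerate solutions, and each such solution fixes the point $(\chi_1(\gamma):\dots:\chi_m(\gamma))$, confining $\gamma$ to a coset of a proper subtorus; every degenerate solution makes a proper subsum $\sum_{j\in T}c_j\chi_j$ vanish at $\gamma$, and by minimality of the support this subsum does not vanish on all of $X$, so those $\gamma$ lie on proper subvarieties of $X$. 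Hence $X\cap\Gamma$ is covered by finitely many subtorus cosets together with finitely many proper subvarieties of $X$; since $X$ is irreducible and, by assumption, not a coset, every coset that occurs has dimension $<\dim X$, so $X\cap\Gamma$ is not dense, completing the induction.

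The hard part will be the Subspace Theorem input: the finiteness of non-degenerate solutions of $S$-unit equations is the deep and ineffective ingredient, and it is precisely what renders the statement ineffective. The rest is bookkeeping rather than analysis --- choosing $f$ of minimal support, running the descent on degenerate solutions so that the cosets produced genuinely sit inside $X$, and verifying that the induction on $\dim X$ terminates (a reduction to the case of finite stabilizer $\mathrm{Stab}(X)$ streamlines this). If one prefers not to reproduce the argument, the statement may simply be invoked as in \cite{BG}, or derived from Laurent's theorem as a black box, which is all that the sequel requires.
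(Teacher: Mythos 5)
The paper does not prove this statement at all: it is quoted verbatim from Bombieri--Gubler (Theorem 7.4.7) and used purely as a black box, so there is no in-paper argument to compare yours against. Your sketch is the standard derivation --- reduce to Laurent's theorem (Mordell--Lang for tori) and prove that by induction using the Evertse--Schlickewei--Schmidt finiteness theorem for $S$-unit equations --- which is essentially the route taken in the cited reference, so in substance you are reconstructing the source's proof rather than diverging from the paper. The reduction in your first paragraph (Laurent gives cosets $g_iH_i\subseteq X$ covering $X\cap\Gamma\supseteq\Sigma$, and taking Zariski closures forces $X=\bigcup_i g_iH_i$) is clean and correct. One point in the induction step needs more care: a non-degenerate solution of the unit equation confines $\gamma$ to a coset $g'H'$ of a proper algebraic subgroup of $\mathbb{G}_m^n$, but such a coset can have dimension $\ge\dim X$ and can even contain $X$, so the assertion that ``every coset that occurs has dimension $<\dim X$'' does not follow merely from $X$ not being a coset. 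The usual repair is to split into cases: if $X\not\subseteq g'H'$, then $X\cap g'H'$ is a proper closed subset of the irreducible $X$ and the induction on $\dim X$ applies; if $X\subseteq g'H'$, one translates, identifies the connected component of $H'$ with a torus of smaller dimension, and runs a second induction on $n$ (equivalently, one first reduces to the case where $X$ is contained in no proper coset). With that adjustment the argument closes, and the deep, ineffective input remains exactly where you place it, in the Subspace Theorem.
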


In our setting, Theorem \ref{Zarclos} states that the Zariski closure of our sequence is a certain finite union of translates of algebraic subgroups of $\mathbb{G}_m^n$. Then, going to an appropriate infinite subsequence of solutions (and by taking intersection with one of our cosets), we can assume that there is a single coset $uH$ containing all our solutions, where $u=(\chi_1,\dots,\chi_{k-1}) \in \mathbb{G}_m^{k-1}$ is a solution of our sequence (thus $\chi_i \in x^{\mathbb{Z}}$, with negative exponent), and that our sequence is Zariski-dense in said coset. Clearly, this coset cannot be a single point, therefore $s := \dim H > 0$; since our sequence converges $v$-adically to the origin in $uH$, the following proposition delivers the promised relation between this problem and lacunary polynomial powers.

\begin{proposition}[\protect{\cite[Proposition 1]{CZ3}}]\label{eqv}
	Let $H$ be a connected algebraic subgroup of $\mathbb{G}_m^n$. Then the following conditions are equivalent.
	\begin{enumerate}
		\item The Zariski closure of $H$ in $\mathbb{A}^n$ contains $(0,\dots,0)$.
		\item The lattice $\Lambda_H$ does not contain any non-zero vectors with all non-negative coordinates.
		\item There exists a parametrization $\varphi: \mathbb{G}_m^k \rightarrow H$, with $k= \dim H$, given by $X_i=T^{u_i}, i=1,\dots,n, T=(T_1,\dots,T_k)$, where all coordinates of $u_i \in \mathbb{Z}^k$ are strictly positive.
		\item There exists a point $(x_1,\dots,x_n) \in H \cap C_v^n$, where $C_v^n$ is as in Theorem \ref{fatone}, such that $|x_i|_v < 1$ for $i=1,\dots,n$.
		\item There exists a sequence in $H \cap C_v^n$ converging to $(0,\dots,0)$ in the $v$-adic topology.
	\end{enumerate}
\end{proposition}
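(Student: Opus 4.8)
The plan is to prove the five conditions equivalent through the cyclic chain $(1)\Rightarrow(2)\Rightarrow(3)\Rightarrow(4)\Rightarrow(5)\Rightarrow(1)$. Throughout I use that a connected algebraic subgroup $H\subseteq\mathbb{G}_m^n$ of dimension $k$ is a subtorus, determined by its lattice of multiplicative relations $\Lambda_H=\{m\in\mathbb{Z}^n:\ x_1^{m_1}\cdots x_n^{m_n}=1\text{ on }H\}$, a saturated sublattice of rank $n-k$, with complementary cocharacter lattice $Y(H)=\{a\in\mathbb{Z}^n:\ \langle a,m\rangle=0\ \forall m\in\Lambda_H\}$ of rank $k$; a one-parameter subgroup $t\mapsto(t^{a_1},\dots,t^{a_n})$ lies in $H$ exactly when $a\in Y(H)$. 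Four of the implications are elementary. For $(1)\Rightarrow(2)$, a nonzero $m\in\Lambda_H$ with all coordinates $\ge 0$ would make $\prod_i x_i^{m_i}-1$ a genuine polynomial vanishing on $H$, hence on $\overline{H}\subseteq\mathbb{A}^n$, while it equals $-1$ at the origin, so $(0,\dots,0)\notin\overline{H}$. For $(3)\Rightarrow(4)$, choose $t_1,\dots,t_k\in C_v$ with $0<|t_j|_v<1$; then $x=\varphi(t_1,\dots,t_k)\in H\cap C_v^n$ satisfies $|x_i|_v=\prod_j|t_j|_v^{(u_i)_j}<1$, since all exponents $(u_i)_j$ are strictly positive. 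For $(4)\Rightarrow(5)$, the coordinatewise powers $x^{(N)}=(x_1^N,\dots,x_n^N)$ lie in $H$ and $|x_i^N|_v=|x_i|_v^N\to 0$. For $(5)\Rightarrow(1)$, every polynomial vanishing on $H$ is $v$-adically continuous, hence vanishes at the $v$-adic limit $(0,\dots,0)$ of a sequence of points of $H$, so $(0,\dots,0)\in\overline{H}$.

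The substance is $(2)\Rightarrow(3)$, which I would deduce from a separation argument using the rationality of $H$. Put $V=Y(H)\otimes\mathbb{R}\subseteq\mathbb{R}^n$, so that $V^{\perp}=\Lambda_H\otimes\mathbb{R}$. The key claim is that $(2)$ holds if and only if $V\cap\mathbb{R}_{>0}^n\neq\emptyset$. If $v$ lies in this intersection, any nonzero $m\in\Lambda_H$ satisfies $\langle m,v\rangle=\sum_i m_iv_i=0$ with all $v_i>0$, which is impossible when $m\ge 0$; hence $(2)$. Conversely, if $V$ misses the open orthant, the hyperplane separation theorem applied to the linear subspace $V$ and the open convex cone $\mathbb{R}_{>0}^n$ yields $0\neq w\in\mathbb{R}^n$ with $\langle w,v\rangle\le 0\le\langle w,x\rangle$ for all $v\in V$ and $x\in\mathbb{R}_{\ge 0}^n$; linearity of $V$ forces $w\in V^{\perp}$, and the second inequality forces $w\ge 0$. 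Since $V^{\perp}$ is cut out by integer linear equations, $V^{\perp}\cap\mathbb{R}_{\ge 0}^n$ is a rational polyhedral cone, so if it is nonzero it contains a nonzero integral vector, which lies in $V^{\perp}\cap\mathbb{Z}^n=\Lambda_H$ and has all coordinates $\ge 0$, contradicting $(2)$. Granting the claim, $V\cap\mathbb{R}_{>0}^n$ is a nonempty relatively open cone in $V\cong\mathbb{R}^k$, so it contains $k$ linearly independent lattice vectors $u^{(1)},\dots,u^{(k)}\in Y(H)$ all of whose coordinates are strictly positive; the homomorphism $\varphi\colon\mathbb{G}_m^k\to H$, $(T_1,\dots,T_k)\mapsto\bigl(\prod_j T_j^{(u^{(j)})_1},\dots,\prod_j T_j^{(u^{(j)})_n}\bigr)$, then has image a $k$-dimensional connected subgroup of $H$, hence all of $H$, and is of the stated form $X_i=T^{u_i}$ with $u_i=\bigl((u^{(1)})_i,\dots,(u^{(k)})_i\bigr)$ strictly positive, which is $(3)$.

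I expect the separation step inside $(2)\Rightarrow(3)$ to be the only delicate point: the separation theorem produces merely a \emph{real} functional, and one must promote it to an \emph{integral} relation vector --- which is exactly where the rationality of $V$ (equivalently, the fact that $H$ is defined by integral monomial relations) enters. The remaining verifications are routine facts about tori and $v$-adic limits.
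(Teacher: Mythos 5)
Your argument is correct, but there is nothing in the paper to compare it against: Proposition \ref{eqv} is quoted verbatim from Corvaja--Zannier (\cite[Proposition 1]{CZ3}) and the paper supplies no proof of it. Your cyclic chain, with the four easy implications handled by monomial relations, powering, and $v$-adic continuity, and the substantive step $(2)\Rightarrow(3)$ done via hyperplane separation of $Y(H)\otimes\mathbb{R}$ from the open positive orthant followed by the rationality argument to promote the real separating functional to an integral vector of $\Lambda_H$, is a complete and standard proof of the cited result (the saturation of $\Lambda_H$, which you implicitly use to conclude $V^{\perp}\cap\mathbb{Z}^n=\Lambda_H$, holds precisely because $H$ is connected).
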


Our sequence satisfies the fifth condition of Proposition \ref{eqv}; then, every condition holds in our coset $uH$. In particular, the third one states that there exists a parametrization of $uH$ of the form $X_i=\tilde{\xi}_iT_1^{a_{i1}}\dots T_s^{a_{is}},$ with $i=1,\dots,k-1$ and $a_{ij} \ge 0$ for every $i,j$. Moreover, Theorem \ref{fatone} ensures that $F(X_1,\dots,X_{k-1})$ becomes, with this parametrization of $uH$, a polynomial in the indeterminates $T_1,\dots,T_s$. Since $F^d(x_1,\dots,x_{k-1})=c_0+c_1x_1+\dots+c_{k-1}x_{k-1}$, and our sequence of solutions is Zariski-dense in $uH$, we obtain a polynomial identity of the shape
$$F^d(X_1,\dots,X_{k-1})=c_0+\sum_{i=1}^{k-1 }c_i\tilde{\xi}_iT_1^{a_{i1}}\dots T_s^{a_{is}}.$$
Since our sequence converges $v$-adically to $(0,\dots,0)$, the vectors $\vect{a}_i=(a_{i1},\dots,a_{is}) \in \mathbb{N}^s$ are non-zero, hence there is a vector $\vect{b}=(b_1,\dots,b_s), b_i \in \mathbb{Z}^+$ such that the scalar products $l_i:=\vect{b}\vect{a}_i$ are all positive and $l_i=l_j$ if and only if $\vect{a}_i=\vect{a}_j$. Thus, replacing $T_j$ with $T^{b_j}$, for $j=1,\dots,s$, $F(X_1,\dots,X_{k-1})$ becomes a non-constant polynomial $P(T) \in \mathbb{C}[T]$ such that the following identity holds (dividing by $c_0$ if needed): 
\begin{equation}\label{mainpol}
P(T)^d=1+\sum_{i=1}^{k-1}\xi_iT^{l_i}.
\end{equation} 
At this stage, notice that, following these substitutions, our coefficients $\xi_i$ are such that  $\xi_i \in \mathcal{C}_x=\{\frac{px^q}{r} \ | \ p,r \in \{ 1,\dots,x-1\}, q \in \mathbb{Z}^- \}$, and that $l_1 \le l_2 \le l_3 \le l_4$ are positive integers, associated to the $n_i$ (not necessarily in the same order), such that $l_i=l_j$ implies $\xi_i \neq \xi_j$. Therefore, we can use classification results for lacunary polynomial powers with complex coefficients to solve equation (\ref{mainpol}), and then check for every solution whether the coefficients $\xi_i$ belong to $\mathcal{C}_x$; after that, we can then pull back some useful information that will allow us to solve the original equation (\ref{maincase}). A similar relation can also be obtained if the extremal gap involves the rightmost digits rather than the leftmost ones. 
\\
\underline{\textbf{Second case}}: Let $C_1 \in ]0,1[$ be a constant, and assume that there is a linear gap between the two rightmost non-zero digits, that is, $m_1 \ge C_1m_{k-1}$ (namely, the second leftmost non-zero digit grows linearly with the length of our perfect power). 

Let $x_i=x^{m_i}$; then $$y^d=c_0+\sum_{i=1}^{k-1} c_ix_i.$$
Then, in a similar fashion as in the first case, we can reduce, with some analogous parametrization, this equation to the same polynomial identity (\ref{mainpol}), with $\xi_i \in \mathcal{C}_x.$

We now study equation (\ref{mainpol}). Proposition \ref{4} allows us to solve this polynomial equation under the assumption $k \le 5$; clearly, since the cases $k=3$ and $k=4$ have already been investigated by Corvaja and Zannier, we will focus on the next case $k=5$; further, we will assume for simplicity that all the non-zero digits of the perfect powers in equation (\ref{maincase}) are equal to $1$, that is, $c_0=\dots=c_{k-1}=1$, although this method can be extended to deal with the other cases with some tedious calculations. Then $\mathcal{C}_x=x^{\mathbb{Z}^-}$, and all coefficients $\xi_i$ must be perfect powers of $x$ with negative exponent.

Therefore, we have to examine the tables described in Proposition \ref{4}; our aim is to rewrite the polynomials contained therein as polynomials with at most five terms, whose	 degrees are not necessarily different, and with coefficients belonging to $x^{\mathbb{Z}^-}$:
\begin{itemize}
	\item Table \ref{Tab4-1}: Here $P(T)^d$ has exactly five terms, hence we must have $l_1 < l_2 < l_3 < l_4$ and $\xi_i \in x^{\mathbb{Z}}$ for every $i=1,\dots,4$.  Since the coefficients $\xi_i$ must be positive, only the first solution is admissible; however, since $\xi_i \in x^\mathbb{Z}$, for that case we would obtain $\frac{3}{8}=\frac{\xi_2}{\xi_1^2} \in x^{\mathbb{Z}}$, which is impossible.
	\item Table \ref{Tab4-2}: In this case, all coefficients are depending on  $\xi_1,\xi_2$. Thus, taking $\xi_1=x^a$, $\xi_2=x^b$, the condition $\xi_3 \in x^{\mathbb{Z}}$ implies that $\xi_3=\xi_1\left( \frac{4\xi_2-\xi_1^2}{8} \right) \in x^\mathbb{Z}$, that is $\frac{4x^b-x^{2a}}{8} = x^c$ for some $c \in \mathbb{Z}$ (such that $\xi_3=x^{c+a}$), or equivalently $\frac{4x^{b-2a}-1}{8}=x^{c-2a}$. However, it is easy to see that if $x \neq 2$ the left-hand side of the equation cannot belong to  $x^{\mathbb{Z}}$; on the other hand, if $x=2$, the equation becomes $2^{b+2}-2^{2a}=2^{c+3}$, whose solutions are $c+3=2a$ and $b+2=2a+1$, that is $b=2a-1$, $c=2a-3$, yielding the coefficients $\xi_2=\frac{1}{2}\xi_1^2$, $\xi_3=\frac{1}{8}\xi_1^3$ and $\xi_4=\left( \frac{\xi_3}{\xi_1}\right)^2=\frac{1}{64}\xi_1^4$. Therefore, we obtain the polynomial $$P(T)=1+\frac{1}{2}\xi_1T^{l_1}+\frac{1}{8}\xi_1^2T^{2l_1}, \ \ \ \  P(T)^2=1+\xi_1T^{l_1}+\frac{1}{2}\xi_1^2T^{2l_1}+\frac{1}{8}\xi_1^3T^{3l_1}+\frac{1}{64}\xi_1^4T^{4l_1}.$$

	\item Table \ref{Tab3}: In this case we have $P(T)^d=1+\xi'_1T^{l'_1}+\xi'_2T^{l'_2}+\xi'_3T^{l'_3}$ (hence exactly two among the $l_i$ are equal), with $\xi'_1,\xi'_2,\xi'_3$ such that exactly two of them belong to $x^{\mathbb{Z}}$ while the other one is a sum of two elements of $x^{\mathbb{Z}}$. Therefore, at least one between $\frac{1}{3}=\frac{\xi'_2}{(\xi'_1)^2}$ and $\frac{1}{27}=\frac{\xi'_3}{(\xi'_1)^3}=\frac{(\xi'_3)^2}{(\xi'_2)^2}$ must belong to $x^{\mathbb{Z}}$: this implies that $x$ is a power of $3$. Obviously, any solution of this form follows from a solution in base $3$, hence we can safely assume that $x=3$. But if at least two among $\xi'_1,\xi'_2,\xi'_3$ belong to $3^{\mathbb{Z}}$, from the relations written in the table we easily deduce that they must all belong $3^{\mathbb{Z}}$; since one of these is obtained as sum of two $\xi_i$ (which belong to $3^{\mathbb{Z}}$), we obtain the equation $3^a+3^b=3^c$, which has no solution for $a,b,c \in \mathbb{Z}$.
	
	\item Table \ref{Tab2}: In this last case $P(T)^d=1+\xi'_1T^{l'_1}+\xi'_2T^{l'_2}$, $d=2$ and $\xi'_2=\left(\frac{1}{2}\xi'_1\right)^2$. Hence, there are three ways in which the four coefficients $\xi_i$ (and the associated exponents $l_i$) can combine to form the two $\xi'_i$, namely:
	\begin{enumerate}
		\item $\xi'_1$ is obtained as a sum of three $\xi_i$, and $\xi'_2=\xi_j \in x^{\mathbb{Z}}$;
		
		\item $\xi'_2$ is obtained as a sum of three $\xi_i$, and $\xi'_1=\xi_j \in x^{\mathbb{Z}}$;
		
		\item $\xi'_1, \xi'_2$ are both obtained as a sum of two $\xi_i$ each.
	\end{enumerate}
	As $\xi_i \in x^{\mathbb{Z}}$, we can write $\xi_i=x^{\sigma_i}$, with $\sigma_i \in \mathbb{Z}$. Thus, these three cases give us (rearranging the indexes if needed) the following equations:
	\begin{enumerate}
		\item $\frac{1}{4}(x^{\sigma_1}+x^{\sigma_2}+x^{\sigma_3})^2=x^{\sigma_4}$, that is $ (x^{\sigma_1}+x^{\sigma_2}+x^{\sigma_3})^2=4x^{\sigma_4};$
		\item $\frac{1}{4}(x^{\sigma_1})^2=x^{\sigma_2}+x^{\sigma_3}+x^{\sigma_4}$, or equivalently that $ x^{2\sigma_1}=4x^{\sigma_2}+4x^{\sigma_3}+4x^{\sigma_4};$
		\item $\frac{1}{4}(x^{\sigma_1}+x^{\sigma_2})^2=x^{\sigma_3}+x^{\sigma_4}$, which yields $ x^{2\sigma_1}+x^{2\sigma_2}+2x^{\sigma_1+\sigma_2}=4x^{\sigma_3}+4x^{\sigma_4}.	$
	\end{enumerate}
	Let us study each case separately:
	\begin{enumerate}
		\item Assume without loss of generality that $\sigma_1 > \sigma_2 > \sigma_3.$ Then the left-hand side yields an integer whose base $x$ representation has at least two non-zero digits, one at place $2\sigma_3$ and one at place $2\sigma_1 \ge 2 \sigma_3 + 4$ or higher, while the one on right-hand side has either one or two digits (only if $x=3$), which are, however, consecutive: therefore this equation admits no solution.
		\item Similarly, the left-hand side of this equation yields an integer whose base $x$ representation has exactly one non-zero digit, whence the one on the right-hand side has at least two: again, this equation admits no solution.
		\item In this case, notice that if $x \ge 4$, the left-hand side yields an integer whose base $x$ representation has exactly three non-zero digits, while the right-hand side has exactly two: thus $x \in \{2,3\}$. 
		\begin{enumerate}
			\item If $x=3$, we have $3^{2\sigma_1}+3^{2\sigma_2}+2 \cdot 3^{\sigma_1+\sigma_2}=4\cdot 3^{\sigma_3}+4 \cdot 3^{\sigma_4};$ assume without loss of generality that $\sigma_1 > \sigma_2$ and $\sigma_3 > \sigma_4.$ Then $3^{2\sigma_1}+2 \cdot 3^{\sigma_1+\sigma_2}+3^{2\sigma_2}=3^{\sigma_3+1}+3^{\sigma_3}+3^{\sigma_4+1}+3^{\sigma_4};$ clearly, since the base $3$ representation of an integer must be unique, we must have
			$$\begin{cases} \sigma_3=\sigma_4+1 \\ 2\sigma_2=\sigma_4 \\ 2\sigma_1=\sigma_3+1  \end{cases} \text{ and this implies } \begin{cases} \sigma_1=\sigma_2+1 \\ \sigma_3=2\sigma_2+1 \\ \sigma_4=2\sigma_2  \end{cases}.$$
			Then, rearranging those $\sigma_i$ in increasing order, and computing the associated coefficients $\xi_i$, we obtain $$P(T)^2=1+\xi_1T^{l_1}+3\xi_1T^{l_1}+\xi_1^2T^{2l_1}+3\xi_1^2T^{2l_1}.$$

			\item If $x=2$ we obtain $2^{2\sigma_1}+2^{2\sigma_2}+ 2^{\sigma_1+\sigma_2+1}=2^{\sigma_3+2}+ 2^{\sigma_4+2};$ again, we can assume without loss of generality that $\sigma_1 > \sigma_2$ and $\sigma_3 > \sigma_4.$ These two base $2$ representation define the same integer, and thus must coincide. Thus we have
			$$\begin{cases} \sigma_1+\sigma_2+1=2\sigma_1 \\ 2\sigma_2=\sigma_4+2 \\ 2\sigma_1+1=\sigma_3+2  \end{cases} \text{ yielding } \begin{cases} \sigma_1=\sigma_2+1 \\ \sigma_3=2\sigma_2+1 \\ \sigma_4=2\sigma_2-2  \end{cases},$$
			which, rearranging the $\sigma_i$ in increasing order and computing the coefficients $\xi_i$, defines the solution $$P(T)^2=1+\xi_1T^{l_1}+2\xi_1T^{l_1}+\frac{1}{4}\xi_1^2T^{2l_1}+2\xi_1^2T^{2l_1}.$$

		\end{enumerate}
	\end{enumerate}
\end{itemize}

Therefore, the only solutions to the equation
$$P(T)^d=1+\xi_1T^{l_1}+\xi_2T^{l_2}+\xi_3T^{l_3}+\xi_4T^{l_4},$$ with $\xi_i \in x^{\mathbb{Z}}$ are the following:
\begin{enumerate}
	\item $x=3,d=3, P(T)^2=1+\xi_1T^{l_1}+3\xi_1T^{l_1}+\xi_1^2T^{2l_1}+3\xi_1^2T^{2l_1};$
	\item $x=2,d=2,P(T)^2=1+\xi_1T^{l_1}+\frac{1}{2}\xi_1^2T^{2l_1}+\frac{1}{8}\xi_1^3T^{3l_1}+\frac{1}{64}\xi_1^4T^{4l_1};$
	
	\item $x=2,d=2,P(T)^2=1+\xi_1T^{l_1}+2\xi_1T^{l_1}+\frac{1}{4}\xi_1^2T^{2l_1}+2\xi_1^2T^{2l_1}.$
\end{enumerate}

As before, our parametrization induces the correspondence $$y^d=1+x^{m_1}+x^{m_2}+x^{m_3}+x^{m_4} \mapsto P(T)=1+\xi_1T^{l_1}+\xi_2T^{l_2}+\xi_3T^{l_3}+\xi_4T^{l_4},$$

%Si ottiene quindi il seguente risultato:
%\begin{theorem}\label{5last}
%Le uniche sequenze infinite di soluzioni intere positive dell'equazione $y^d=1+x^{m_1}+x^{m_2}+x^{m_3}+x^{m_4},$ con $x,d \ge 2$, $m_1 < m_2 < m_3 < m_4$ interi positivi tali che $m_3 \le C_3m_4,$ con $C_3 \in ]0,1[$ costante, sono le seguenti:
%\begin{itemize}
%	\item $x=2,d=2,(m_1,2m_1-1,3m_1-3,4m_1-6), y=1+2^{m_1-1}+2^{2m_1-3}.$
%	\item $x=3,d=2,(1,m_2,m_2+1,2m_2), y=3^{m_2}+2.$
%	\item $x=2,d=2,(3,m_2,m_2+1,2m_2-2), y=2^{m_2-1}+3.$ 
%\end{itemize}
%\end{theorem}

We can then go back to our two settings, and deduce the associated solutions of the equation $$y^d=1+x^{m_1}+x^{m_2}+x^{m_3}+x^{m_4}$$ via our parametrizations. 
\\
\underline{\textbf{First case}}: Remember that, in this setting, from our parametrizations we obtain the following correspondences $$y^d=1+x^{m_1}+x^{m_2}+x^{m_3}+x^{m_4} \mapsto z^d=1+x^{-n_1}+x^{-n_2}+x^{-n_3}+x^{-n_4}$$ $$\mapsto P(T)=1+\xi_1T^{l_1}+\xi_2T^{l_2}+\xi_3T^{l_3}+\xi_4T^{l_4}.$$

Therefore, each term of our polynomial is associated to a perfect power $x^{-n_i}$, which in turn will be used to compute the $x^{m_i}$. Let us study now the three solutions described before:

\begin{enumerate}
	\item $x=3,d=3, P(T)^2=1+\xi_1T^{l_1}+3\xi_1T^{l_1}+\xi_1^2T^{2l_1}+3\xi_1^2T^{2l_1}.$

	Hence there exists a permutation $\{n'_1,n'_2,n'_3,n'_4\}$ of the exponents $n_i$ such that
	$$\begin{cases} 3^{-n'_1} \mapsto \xi_1T^{l_1}, \\ 3^{-n'_2} \mapsto 3\xi_1T^{l_1}, \\ 3^{-n'_3} \mapsto \xi_1^2T^{2l_1}, \\ 3^{-n'_4} \mapsto 3\xi_1^2T^{2l_1}, \end{cases}\text{   which implies  } \begin{cases} n'_1 \in \mathbb{Z}^+, \\ -n'_2 = (-n'_1)+1, \\ -n'_3=2(-n'_1), \\ -n'_4=2(-n'_1)+1.  \end{cases}.$$
	Clearly, $n'_2 < n'_1 < n'_4 < n'_3$, thus the relations between the $n_i$ yield $n_1=n'_2$, $n_2=n'_1$, $n_3=n'_4$ and $n_4=n'_3$; thus, by substituting the values of $m_i$ and solving in function of $m_1$ we obtain	$$\begin{cases} m_2 \in \mathbb{Z}^+, \\ (m_4-m_2)= (m_4-m_3)+1, \\ (m_4-m_1)=2(m_4-m_3)+1, \\ m_4=2(m_4-m_3)+2,  \end{cases} \text{  yielding } \begin{cases} m_1=1, \\ m_2 \in \mathbb{Z}^+, \\ m_3=m_2+1, \\ m_4=2m_2.  \end{cases},$$
	giving the infinite family of solutions defined by $d=2$, $x=3$ and $$(m_1,m_2,m_3,m_4)=(1,m_2,m_2+1,2m_2), y=3^{m_2}+2.$$

	\item $x=2,d=2,P(T)^2=1+\xi_1T^{l_1}+\frac{1}{2}\xi_1^2T^{2l_1}+\frac{1}{8}\xi_1^3T^{3l_1}+\frac{1}{64}\xi_1^4T^{4l_1}.$

	Again, there is a permutation $\{n'_1,\dots,n'_4 \}$ of the exponents $n_i$ such that
	$$\begin{cases} 2^{-n'_1} \mapsto \xi_1T^{l_1}, \\ 2^{-n'_2} \mapsto \frac{1}{2}\xi_1^2T^{2l_1}, \\ 2^{-n'_3} \mapsto \frac{1}{8}\xi_1^3T^{3l_1}, \\ 2^{-n'_4} \mapsto \frac{1}{64}\xi_1^4T^{4l_1}, \end{cases}\text{  thus implying   } \begin{cases} n'_1 \in \mathbb{Z}^+, \\ -n'_2 = 2(-n'_1)-1, \\ -n'_3=3(-n'_1)-3, \\ -n'_4=4(-n'_1)-6.  \end{cases}$$
	
	This time we have $n'_1 < n'_2 < n'_3 < n'_4$, hence $n_i=n'_i$ for every $i$, and this system yields a linear system in the exponents $m_i$, which we can solve in function of $m_1$, obtaining
	$$\begin{cases} -(m_4-m_2)=-2(m_4-m_3)-1,  \\ -(m_4-m_1)=-3(m_4-m_3)-3, \\ -m_4=-4(m_4-m_3)-6,  \end{cases} \text{ and then } \begin{cases} m_2=2m_1-1,  \\ m_3=3m_1-3, \\ m_4=4m_1-6,   \end{cases},$$
	that gives the infinite family of solutions described by $d=2$, $x=2$ and $$
	(m_1,m_2,m_3,m_4)=(m_1,2m_1-1,3m_1-3,4m_1-6), y=1+2^{m_1-1}+2^{2m_1-3}.$$
	\item $x=2,d=2,P(T)^2=1+\xi_1T^{l_1}+2\xi_1T^{l_1}+\frac{1}{4}\xi_1^2T^{2l_1}+2\xi_1^2T^{2l_1}.$
	
	Thus, there exists a permutation $\{n'_1,n'_2,n'_3,n'_4\}$ of the exponents $n_i$ such that
	$$\begin{cases} 2^{-n'_1} \mapsto \xi_1T^{l_1}, \\ 2^{-n'_2} \mapsto 2\xi_1T^{l_1}, \\ 2^{-n'_3} \mapsto \frac{1}{4}\xi_1^2T^{2l_1}, \\ 2^{-n'_4} \mapsto 2\xi_1^2T^{2l_1}, \end{cases}\text{   which implies   } \begin{cases} n'_1 \in \mathbb{Z}^+, \\ -n'_2 = (-n'_1)+1, \\ -n'_3=2(-n'_1)-2, \\ -n'_4=2(-n'_1)+1.  \end{cases}$$
	This time we have $n'_2 < n'_1 < n'_4 < n'_3$, thus $n_1=n'_2$, $n_2=n'_1$, $n_3=n'_4$ and $n_4=n'_3$; again, by substituting these $n_i$ and solving in function of $m_1$ we get 	$$\begin{cases} m_2 \in \mathbb{Z}^+, \\ (m_4-m_2)= (m_4-m_3)+1, \\ (m_4-m_1)=2(m_4-m_3)-3, \\ m_4=2(m_4-m_3), \end{cases} \text{ which yields } \begin{cases} m_1=3, \\ m_2 \in \mathbb{Z}^+, \\ m_3=m_2+1, \\ m_4=2m_2-2,  \end{cases},$$
	giving the infinite family of solutions described by $d=2$, $x=2$ and $$(m_1,m_2,m_3,m_4)=(3,m_2,m_2+1,2m_2-2), y=2^{m_2-1}+3.$$
\end{enumerate}
We have thus proved the following result.
\begin{theorem}\label{5last}
	The only infinite families of solutions to the equation $y^d=1+x^{m_1}+x^{m_2}+x^{m_3}+x^{m_4},$ for $x,y,d$ positive integers and $\vect{m}=(m_1,m_2,m_3,m_4)$ such that $x,d \ge 2$, $m_1 < m_2 < m_3 < m_4$ and $m_3 \le C_3m_4,$ with $C_3 \in ]0,1[$ fixed, are the following:
	\begin{itemize}
		\item $x=3,d=2,\vect{m}=(1,m_2,m_2+1,2m_2), y=3^{m_2}+2;$
		\item $x=2,d=2,\vect{m}=(m_1,2m_1-1,3m_1-3,4m_1-6), y=1+2^{m_1-1}+2^{2m_1-3};$
		\item $x=2,d=2,\vect{m}=(3,m_2,m_2+1,2m_2-2), y=2^{m_2-1}+3.$ 
	\end{itemize}
\end{theorem}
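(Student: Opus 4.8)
The plan is to assemble the three ingredients developed above: the Corvaja--Zannier height machinery (Theorems \ref{fatone} and \ref{Zarclos}, Proposition \ref{eqv}), which converts the Diophantine equation into a lacunary polynomial power identity; the classification of Proposition \ref{4}; and the pull-back through the parametrization. Throughout one works with the normalized shape $y^d = 1 + x^{m_1} + x^{m_2} + x^{m_3} + x^{m_4}$, $m_1 < m_2 < m_3 < m_4$, obtained after dividing out a power of $x$, and with $c_0 = \dots = c_4 = 1$ so that $\mathcal{C}_x = x^{\mathbb{Z}^-}$.

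First I would check that the gap hypothesis $m_3 \le C_3 m_4$ places us in the \textbf{First case}, and verify the four conditions of Theorem \ref{fatone} for the series $F = \bigl(1 + \sum_{i=1}^4 X_i\bigr)^{1/d}$ evaluated at $x_i = x^{-n_i}$, where $n_i = m_4 - m_{4-i}$ for $i=1,2,3$ and $n_4 = m_4$: the $x_i$ are $S$-units, $z = F(x_1,\dots,x_4) = y x^{-m_4/d}$ is an $S$-integer, $n_1 = m_4 - m_3 \ge (1-C_3)m_4$ gives $h(\mathbf{x}_h) = O(-\log\max_i|x_{hi}|_v)$, and $h(z) \le 2m_4$. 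Theorem \ref{fatone} then gives finitely many cosets on which $F$ restricts to a polynomial; passing to a subsequence (Theorem \ref{Zarclos}) and applying Proposition \ref{eqv} puts all solutions Zariski-densely inside one coset $uH$ of positive dimension, parametrized by $X_i = \tilde\xi_i T_1^{a_{i1}}\cdots T_s^{a_{is}}$ with $\mathbf{a}_i \in \mathbb{N}^s \setminus \{0\}$; choosing $\mathbf{b} \in (\mathbb{Z}^+)^s$ separating the $\mathbf{a}_i$ and setting $T_j \mapsto T^{b_j}$ turns $F$ into a polynomial $P(T)$ with $P(T)^d = 1 + \sum_{i=1}^4 \xi_i T^{l_i}$, $\xi_i \in x^{\mathbb{Z}^-}$, the $l_i$ not necessarily distinct.

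Next I would sweep through the tables of Proposition \ref{4}, keeping only the rows for which, after merging terms whose exponents coincide, every coefficient lies in $x^{\mathbb{Z}}$. Table \ref{Tab4-1} survives only in the row $P(T)=1+\tfrac14\xi_1 T^{l_1}$, killed by $\xi_2/\xi_1^2 = 3/8 \notin x^{\mathbb{Z}}$; Table \ref{Tab4-2} forces $x=2$ and yields $P(T)^2 = 1 + \xi_1 T^{l_1} + \tfrac12\xi_1^2 T^{2l_1} + \tfrac18\xi_1^3 T^{3l_1} + \tfrac1{64}\xi_1^4 T^{4l_1}$; Table \ref{Tab3} runs into $3^a + 3^b = 3^c$ and dies; Table \ref{Tab2} splits into the three ways four powers of $x$ can form the two coefficients of $1+\xi_1' T^{l_1'} + \xi_2' T^{l_2'}$ with $\xi_2'=(\xi_1'/2)^2$, and uniqueness of base-$x$ digits leaves exactly $P(T)^2 = 1 + \xi_1 T^{l_1} + 3\xi_1 T^{l_1} + \xi_1^2 T^{2l_1} + 3\xi_1^2 T^{2l_1}$ for $x=3$ and $P(T)^2 = 1 + \xi_1 T^{l_1} + 2\xi_1 T^{l_1} + \tfrac14\xi_1^2 T^{2l_1} + 2\xi_1^2 T^{2l_1}$ for $x=2$. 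Finally, for each of these three identities I would pull back: each monomial of $P(T)^d$ corresponds to one $x^{-n_i}$, the exponent relations together with $n_1<n_2<n_3<n_4$ fix the correspondence up to the forced reordering, and substituting $m_i = m_4 - n_{4-i}$ and solving the resulting linear system in a single free parameter reproduces the three stated families.

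The main obstacle is the bookkeeping of the third step. Proposition \ref{4} is phrased for strictly increasing exponents, whereas here the $l_i$ may collide, so one must enumerate all ways the at most five monomials of $P(T)^d$ can collapse and, for each collapse pattern, decide an exponential Diophantine equation of the shape $\sum \pm x^{a_i} = 0$ in a handful of unknowns; the individual determinations are elementary (reducing to uniqueness of the base-$x$ representation once the powers of $2$ and $3$ have been separated out), but the list must be shown to be exhaustive. A secondary point requiring care is that Theorem \ref{fatone} only produces the polynomial identity \emph{along a subsequence}, so one must argue that every infinite family of solutions, after passing to such a subsequence, arises from one of the three polynomial identities, and hence that no family is lost.
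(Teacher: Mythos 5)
Your proposal follows the paper's proof essentially verbatim: the same reduction via Theorems \ref{fatone} and \ref{Zarclos} and Proposition \ref{eqv} to the identity $P(T)^d=1+\sum_{i=1}^{4}\xi_iT^{l_i}$ with $\xi_i\in x^{\mathbb{Z}^-}$, the same sweep through the tables of Proposition \ref{4} (with the same eliminations, including $3/8\notin x^{\mathbb{Z}}$ for Table \ref{Tab4-1} and $3^a+3^b=3^c$ for Table \ref{Tab3}, and the same three surviving identities), and the same pull-back through $n_i=m_4-m_{4-i}$. The two caveats you flag at the end (exhaustiveness of the collapse patterns and the passage to a subsequence) are exactly the points the paper itself treats, the former by the explicit case split on how the four coefficients merge and the latter by the Zariski-density argument on a single coset.
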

\underline{\textbf{Second case}}:  In this case, the correspondence obtained is easier, since we get $$y^d=1+x^{m_1}+x^{m_2}+x^{m_3}+x^{m_4} \mapsto P(T)=1+\xi_1T^{l_1}+\xi_2T^{l_2}+\xi_3T^{l_3}+\xi_4T^{l_4}.$$
Since this time each term of $P(T)$ gives a perfect power $x^{m_i}$, we can immediately deduce the solutions:
\begin{enumerate}
	\item $x=2,d=2,(m_1,2m_1-1,3m_1-3,4m_1-6), y=1+2^{m_1-1}+2^{2m_1-3};$
	\item $x=3,d=2,(m_1,m_1+1,2m_1,2m_1+1), y=2\cdot 3^{m_1}+1;$
	\item $x=2,d=2,(m_1,m_1+1,2m_1-2,2m_1+1), y=1+2^{m_1-1}+2^{m_1}.$
\end{enumerate} 
Thus, we obtain the following result.
\begin{theorem}\label{5first}
	The only infinite families of solutions to the equation $y^d=1+x^{m_1}+x^{m_2}+x^{m_3}+x^{m_4},$ for $x,y,d$ positive integers and $\vect{m}=(m_1,m_2,m_3,m_4)$ such that $x,d \ge 2$, $m_1 < m_2 < m_3 < m_4$ and $m_1 \ge C_1m_4,$ with $C_1 \in ]0,1[$ fixed, are the following:
	\begin{itemize}
		\item $x=2,d=2,\vect{m}=(m_1,2m_1-1,3m_1-3,4m_1-6), y=1+2^{m_1-1}+2^{2m_1-3};$
		\item $x=3,d=2,\vect{m}=(m_1,m_1+1,2m_1,2m_1+1), y=2\cdot 3^{m_1}+1;$
		\item $x=2,d=2,\vect{m}=(m_1,m_1+1,2m_1-2,2m_1+1), y=1+2^{m_1-1}+2^{m_1}.$
	\end{itemize} 
\end{theorem}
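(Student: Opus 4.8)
The plan is to assemble the reduction already carried out above into a structure statement and then read the three families off the classification. First I would establish the core fact that \emph{any} infinite family of solutions of $y^d = 1 + x^{m_1} + x^{m_2} + x^{m_3} + x^{m_4}$ with $m_1 \ge C_1 m_4$ produces an identity of the form (\ref{mainpol}) with all coefficients in $x^{\mathbb{Z}}$. Concretely, given such a family, set $x_i = x^{m_i}$ and run the $x$-adic (``second case'') version of the construction above: the binomial series $F = (1 + X_1 + \dots + X_4)^{1/d}$ converges $v$-adically at $(x^{m_1}, \dots, x^{m_4})$ for $v$ the place over $x$, one has $y = F(x^{m_1}, \dots, x^{m_4})$, and the hypotheses of Theorem \ref{fatone} are met ($z = y$ is an $S$-integer, the $x_i$ are $S$-units, the heights are controlled because $m_1 \ge C_1 m_4$). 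Hence $F$ restricts to a polynomial on finitely many cosets; Theorem \ref{Zarclos} and Proposition \ref{eqv} then allow passing to an infinite subsequence that is Zariski-dense in a single positive-dimensional coset $uH$ carrying a parametrization with strictly positive exponents, and collapsing the parameters to a single variable $T$ by a suitable weight vector $\vect{b}$ yields a non-constant $P(T) \in \mathbb{C}[T]$ with $P(T)^d = 1 + \sum_{i=1}^{4} \xi_i T^{l_i}$, where the $l_i$ are positive integers (not necessarily distinct, but with $l_i = l_j \Rightarrow \xi_i \ne \xi_j$) and $\xi_i \in x^{\mathbb{Z}}$ since every digit equals $1$.

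Next I would classify the admissible $P$. By Proposition \ref{4}, $P(T)$---and therefore $P(T)^d$, of at most five terms---lies in the list of Tables \ref{Tab4-1}--\ref{Tab2}; intersecting that list with the constraint $\xi_i \in x^{\mathbb{Z}}$ I would go through the four tables. The genuinely five-term rows are ruled out by obstructions such as $\tfrac38 \notin x^{\mathbb{Z}}$, while in the degenerate rows the displayed coefficient relations force base-specific Diophantine equations ($3^a + 3^b = 3^c$, $2^{b+2} - 2^{2a} = 2^{c+3}$, and the three equations coming from Table \ref{Tab2}), whose solvability analysis forces $x \in \{2,3\}$ and $d = 2$ and leaves exactly the three profiles for $P(T)^2$ recorded just before the statement.

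Finally I would pull back. In the second case the parametrization is direct, so each monomial of $P(T)$ corresponds to one perfect power $x^{m_i}$: matching the exponents $l_i$ (with multiplicity) against the $m_i$, reordering them increasingly, and solving the resulting small linear systems in the $m_i$ produces the three families, and substituting back into $P$ gives the stated value of $y$; a quick converse check (squaring each family) confirms that these are genuine solutions. The main obstacle is not any individual computation but the bookkeeping in the reduction step---one must ensure that the passage to a subsequence, the choice of $\vect{b}$, and the identification $\mathcal{C}_x = x^{\mathbb{Z}}$ are mutually compatible, so that the exponent combinatorics of (\ref{mainpol}) faithfully encodes that of the original equation; once this is secured, the table analysis, although lengthy, is routine.
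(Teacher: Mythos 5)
Your proposal follows essentially the same route as the paper: the Corvaja--Zannier reduction (Theorem \ref{fatone}, Theorem \ref{Zarclos}, Proposition \ref{eqv}) to the identity (\ref{mainpol}) with $\xi_i \in x^{\mathbb{Z}}$, the intersection of the tables of Proposition \ref{4} with that constraint, and the direct term-by-term pullback that is available in the second case. The only difference is that you spell out the $v$-adic convergence at the place over $x$ and the height estimate coming from $m_1 \ge C_1 m_4$, which the paper leaves implicit by referring back to the first case; this is a correct filling-in, not a different argument.
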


\end{document}